\theoremstyle{plain}
\newtheorem{thm}{Theorem}[section]
\newtheorem{lemma}[thm]{Lemma}
\newtheorem{prop}[thm]{Proposition}
\newtheorem{cor}[thm]{Corollary}
\newtheorem{conj}[thm]{Conjecture}
\newtheorem*{thm*}{Theorem \ref{thm:main}}
\newtheorem*{lemma*}{Lemma}
\newtheorem*{prop*}{Proposition}
\newtheorem*{cor*}{Corollary}
\newtheorem*{conj*}{Conjecture}
\theoremstyle{definition}
\newtheorem{defn}[thm]{Definition}
\newtheorem{ex}[thm]{Example}
\newtheorem{pr}[thm]{Problem}
\newtheorem{alg}[thm]{Algorithm}
\newtheorem{ques}[thm]{Question}
\theoremstyle{remark}
\newtheorem*{rmk}{Remark}
\newcommand{\zz}{\mathbb{Z}}
\newcommand{\qq}{\mathbb{Q}}
\newcommand{\rr}{\mathbb{R}}
\newcommand{\kk}{\mathbb{K}}
\newcommand{\calc}{\mathcal{C}}
\newcommand{\ind}{\mbox{$\perp \kern-5.5pt \perp$}}
\begin{document}

\title{Identifiable reparametrizations of linear compartment models}
\author{Nicolette Meshkat}
\author{Seth Sullivant}

             \email{ncmeshka@ncsu.edu}
              \email{smsulli2@ncsu.edu } 
               
              \address{Department of Mathematics, Box 8205, North Carolina State University, Raleigh, NC, 27695-8205, USA }     
           

\maketitle

\begin{abstract}
Structural identifiability concerns finding which unknown parameters of a model can be 
quantified from given input-output data.  Many linear ODE models,  
used in systems biology and pharmacokinetics, are unidentifiable, which 
means that parameters can take on an infinite number of values and yet yield 
the same input-output data.  We use commutative algebra and graph theory to
 study a particular class of 
unidentifiable models and find conditions to obtain
 identifiable scaling reparametrizations of these models.  
Our main result is that the existence of an identifiable scaling
reparametrization is equivalent to the existence of a scaling
reparametrization by monomial functions.  We provide an algorithm for finding these reparametrizations when they exist and
partial
results beginning to classify graphs which possess an identifiable
scaling reparametrization.
\par
\noindent \emph{Keywords:} Identifiability, Compartment models,  Reparametrization
\end{abstract}


\section{Introduction}

Parameter identifiability analysis for dynamic system ODE models addresses the question of which unknown parameters can be quantified from given input-output data.  This paper is concerned with structural identifiability analysis, that is whether the parameters of a model could be identified if perfect input-output data (noise-free and of any duration required) were available.  If the parameters of a model have a unique or finite number of values given input-output data, then the model and its parameters are said to be \textit{identifiable}.  However, if some subset of the parameters can take on an infinite number of values and yet yield the same input-output data, then the model and this subset of parameters are called \textit{unidentifiable}.  In such cases, we attempt to reparametrize the model to render it identifiable.  

There have been several methods proposed to find these identifiable reparametrizations.  Evans and Chappell \cite{Evans} use a Taylor Series approach, Chappell and Gunn \cite{Chappell} use a similarity transformation approach, and both Ben-Zvi et al \cite{Ben-Zvi} and Meshkat et al \cite{Meshkat} use a differential algebra approach to find identifiable reparametrizations of nonlinear ODE models (see \cite{Miao2011} for a survey of methods).
However, as demonstrated in \cite{Evans}, there is no guarantee that these reparametrizations will be rational.  For practical applications, e.g. in systems biology, a rational reparametrization is desirable.  The motivation for this paper is to address the following question for linear systems:

\begin{ques}
For which linear ODE models does there exist a rational identifiable
reparametrization?
\end{ques}

In this paper, we focus on \emph{scaling reparametrizations}, which are reparametrizations
that are obtained by replacing an unobserved variable by a scaled
version of itself, and updating the model coefficients accordingly.  We will answer the above question and provide an algorithm (see Algorithm \ref{alg:mainalg}) which takes as its input a system of linear ODEs  with parametric coefficients and gives as its output an identifiable scaling reparametrization, if it exists, or shows that no identifiable scaling reparametrization
exists.

Our main result gives a precise characterization of when
a scaling reparametrization exists, for a specific family of
linear ODE models.

\begin{thm}\label{thm:main}
Consider the linear compartment model with associated strongly connected graph
$G$, where the input and output are in the same compartment.  The following conditions are equivalent for this model:
\begin{enumerate}
\item  The model has an identifiable scaling reparametrization.
\item  The model has an identifiable scaling reparametrization by
monomial functions of the original parameters.
\item  The dimension of the image of the double characteristic polynomial
map associated to $G$ is equal to the number of linearly independent
cycles in $G$.
\end{enumerate}
\end{thm}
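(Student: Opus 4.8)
The plan is to establish the cycle of implications (2)$\Rightarrow$(1)$\Rightarrow$(3)$\Rightarrow$(2); the first is immediate since a monomial map is rational, so the work lies in a dimension count for (1)$\Rightarrow$(3) and an explicit spanning-tree construction for (3)$\Rightarrow$(2). Throughout I use the \emph{double characteristic polynomial map} $\Psi$, sending the parameter vector of the model to the list of coefficients of $\det(sI-A)$ together with those of $\det(sI-A^{(1,1)})$: since these are exactly the coefficients of the input--output equation (numerator and denominator of the transfer function, input and output being in the observed compartment, say compartment $1$), a parameter is identifiable iff it is a rational function of the entries of $\Psi$, and the model is identifiable iff $\Psi$ is generically finite-to-one. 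I also use the torus $T\cong(\mathbb{C}^{\ast})^{\,n-1}$ (with $n$ the number of compartments and compartment $1$ observed) that governs scaling reparametrizations: rescaling the $n-1$ unobserved compartments defines an action of $T$ that leaves $\Psi$ unchanged --- a coordinate change on the state space does not alter the input--output behavior --- and that acts on the rate of an edge $i\to j$ with weight $\mathbf{e}_j-\mathbf{e}_i$, the coordinate of compartment $1$ being suppressed. Because $G$ is strongly connected this action is generically free, so a generic $T$-orbit has dimension $n-1$; and for the models in the theorem the number of parameters minus $n-1$ equals the number $\ell$ of linearly independent cycles of $G$, so (3) reads $\dim\overline{\mathrm{im}\,\Psi}=\ell$.

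For (1)$\Rightarrow$(3): the generic $T$-orbit, of dimension $n-1$, is contained in a fiber of the $T$-invariant map $\Psi$, so $\dim\overline{\mathrm{im}\,\Psi}\le(\text{\# parameters})-(n-1)=\ell$ unconditionally. On the other hand a scaling reparametrization quotients only by a subtorus of $T$, so it reduces the parameter count by at most $n-1$; and if the reparametrized model is identifiable its number of parameters equals the dimension of the set of transfer functions it realizes, which is $\dim\overline{\mathrm{im}\,\Psi}$, since the reparametrization does not enlarge or shrink the achievable input--output behaviors. Chaining, $\ell\le(\text{\# new parameters})=\dim\overline{\mathrm{im}\,\Psi}\le\ell$, forcing equality, which is (3).

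For (3)$\Rightarrow$(2), the substantive step: assume $\dim\overline{\mathrm{im}\,\Psi}=\ell$. Fix a spanning tree $\mathcal{T}$ of $G$ rooted at compartment $1$. The reduced incidence matrix of a tree is unimodular, so the weight vectors of the edges of $\mathcal{T}$ form a $\mathbb{Z}$-basis of the character lattice of $T$; hence for each of the $\ell$ non-tree edges $e$ there are unique integers $c_{ef}$ with $\mathbf{w}_e=\sum_{f\in\mathcal{T}}c_{ef}\mathbf{w}_f$, and the Laurent monomials $b_e:=a_e\prod_{f\in\mathcal{T}}a_f^{-c_{ef}}$ are $T$-invariant. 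Since the $\mathbf{w}_f$ form a basis, every generic $T$-orbit meets the slice $\{a_f=1:f\in\mathcal{T}\}$ in exactly one point, on which the non-tree rates equal the $b_e$; thus, modulo the $T$-action, the model family is parametrized by the $\ell$ monomial coordinates $b=(b_e)$, giving a monomial scaling reparametrization. Its double characteristic polynomial map $\overline{\Psi}$ satisfies $\overline{\mathrm{im}\,\overline{\Psi}}=\overline{\mathrm{im}\,\Psi}$ (the same transfer functions are realized), which by hypothesis has dimension $\ell$ --- the dimension of the domain of $\overline{\Psi}$ --- so $\overline{\Psi}$ is generically finite-to-one and the reparametrized model is identifiable. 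This establishes (2) and closes the cycle.

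The main obstacle I anticipate is not any single inequality but the bookkeeping linking the abstract torus-quotient picture to the concrete operation ``rescale an unobserved variable and update the coefficients'': one must verify that the monomial substitution along $\mathcal{T}$ genuinely yields an admissible reparametrized model, and that no hidden algebraic relation collapses the $\ell$ monomials $b_e$ to fewer --- the latter being exactly the assertion that the edge-weight vectors of $G$ span a lattice of rank $n-1$, which is where strong connectivity is used. With those points settled, the equivalence rests on the two clean facts above: $\dim\overline{\mathrm{im}\,\Psi}\le\ell$ always (generic freeness of the torus action), with equality precisely when the full torus quotient produces an identifiable model.
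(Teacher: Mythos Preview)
Your argument is correct and is essentially the paper's proof recast in the cleaner language of torus actions. The paper establishes $\dim\overline{\mathrm{im}\,\Psi}\le m+1$ by factoring $\Psi$ through the cycle map (Theorem~\ref{thm:funccycles}, Proposition~\ref{prop:cyclemapbound}, Theorem~\ref{thm:dimcycle}); you get the same bound more directly from $T$-invariance of $\Psi$ and generic freeness of the $T$-action. For the lower bound in $(1)\Rightarrow(3)$, the paper computes the Jacobian of a general scaling reparametrization (Proposition~\ref{prop:dimdrop}); your observation that $r(A)$ always lies in the $T$-orbit of $A$, so the quotient $\Theta\to\Theta/T$ factors through $\mathrm{im}\,r$, gives the same inequality without that computation. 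For $(3)\Rightarrow(2)$, the paper sets up and solves the matrix equation $I+CE=MD$ (Lemmas~\ref{lem:matrixeq}--\ref{lem:zeqb}), choosing $C_1=E_1^{-1}$ with $E_1$ the reduced incidence matrix of a spanning tree; your statement that the tree-edge weights form a $\mathbb{Z}$-basis of the character lattice of $T$ is exactly the unimodularity of $E_1$, and your slice $\{a_f=1:f\in\mathcal{T}\}$ is exactly the reparametrization the paper writes down.

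One bookkeeping slip to fix: you write ``for each of the $\ell$ non-tree edges,'' but there are only $m-n+1$ non-tree edges, not $\ell=m+1$. The missing $n$ coordinates are the diagonal parameters $a_{ii}$, which have $T$-weight zero and are therefore already invariant; these must be carried along explicitly so that the reparametrized model has the full $\ell=m+1$ parameters. (In the paper's language, the $a_{ii}$ are the $1$-cycles, and the theorem's ``linearly independent cycles'' includes them.) With that correction your count matches and the argument closes.
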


Note the two key features of the theorem: by part (2) we only need to consider
monomial scaling reparametrizations of the model, and by part
(3) checking for the existence of an identifiable monomial rescaling is
equivalent to determining the dimension of the image of a
certain algebraic map, the \emph{double characteristic polynomial map}.
Theorem   \ref{thm:main} leaves open the problem of characterizing
the graphs $G$ which satisfy the necessary dimension requirements,
but we provide a number of partial results, including
upper bounds on the number of edges that can appear, and constructions
of families of graphs which realize the dimension bound, and
hence have identifiable reparametrizations by monomial rescalings.

The organization of the paper is as follows. The next section provides  
introductory material on compartment models, how to derive the
input-output equation, identifiability, and reparametrizations.
Section \ref{sec:ident} also introduces the main algebraic object of study
in this paper: the double characteristic polynomial map.
Section \ref{sec:cycles} explains how the identifiability
problem relates to the directed cycles in the graph $G$,
and how the cycle structure gives bounds on the dimension of
the image of the double characteristic polynomial map.
Section \ref{sec:mono} contains a proof of Theorem 
\ref{thm:main}, which reduces the problem of characterizing
the graphs which have a scaling reparametrization to the
problem of calculating the dimension of the image of the double 
characteristic polynomial map.
Section \ref{sec:dim} 
includes various combinatorial
constructions to achieve the correct dimension, as well as some necessary
conditions.  In particular, we show that all minimal inductively strongly connected
graphs achieve the correct dimension, and hence have an identifiable
scaling reparametrization.  Section \ref{sec:comp} summarizes our theoretical
results with an algorithm for computing an identifiable scaling 
reparametrization (if one exists).  Section \ref{sec:comp} also includes
 the results of systematic
computations for graphs with few vertices, and contains conjectures
based on the results of those computations.


\section{Identifiability and Reparametrizations}\label{sec:ident}

Let $G$ be a directed graph with $m$ edges and $n$ vertices.  We associate a matrix $A(G)$ to the graph in the following way:

\[
  A(G)_{ij} = \left\{ 
  \begin{array}{l l l}
    a_{ii} & \quad \text{if $i=j$}\\
    a_{ij} & \quad \text{if $j\rightarrow{i}$ is an edge of $G$}\\
    0 & \quad \text{otherwise,}\\
  \end{array} \right.
\]
where each $a_{ij}$ is an independent real parameter.  For brevity, we will use $A$ to denote $A(G)$.

Consider the ODE system of the form,
\begin{equation} \label{eq:main}
\dot{x}(t)=Ax(t)+u(t) \ \ \ \ \ \ \ \ y=x_1
\end{equation}
where $x(t) \in\rr^n$ and $u(t) \in\rr^n$, with $u(t)={\begin{pmatrix}
u_1(t) & 0 & \ldots & 0
\end{pmatrix}}^T$.

Such models are called  \textit{linear compartment models} \cite{Chapman}, where $x$ is the state variable, $u$ is the input vector, $y$ is the output, and the nonzero entries $a_{ij}$ of $A$ are independent parameters.  Since $G$ is a directed graph with $m$ edges and $n$ vertices, the dimension of the parameter space of this model is $m + n$.  Note that $u$ has only one nonzero entry in the first coordinate, and that our
output is $y = x_{1}$, which is also from the first compartment.  Hence, in this
paper we only consider models where there is a single  input and output and both
 are in the same compartment.
Note that we can only observe the input $u_{1}$ and the output $y$:  the state variable $x$ and the parameter entries of $A$ are unknown.

\begin{figure}
\begin{center}
\resizebox{!}{3.5cm}{
\includegraphics{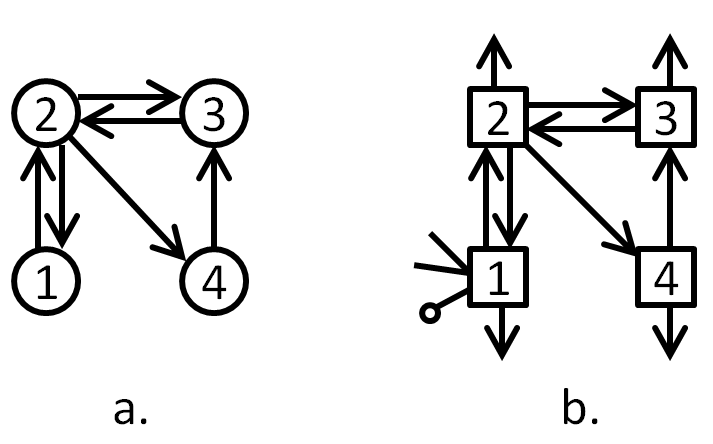}}
\end{center}\caption{a.) A graph with four vertices and b.) A compartment model}
\end{figure}

\begin{ex} \label{ex:mainex} 
For the directed graph $G$ on four vertices with six edges in Figure 1a, the ODE system has the following form:

$$
\begin{pmatrix} 
\dot{x}_1 \\
\dot{x}_2 \\
\dot{x}_3 \\ 
\dot{x}_4 \end{pmatrix} = {\begin{pmatrix} 
a_{11} & a_{12} & 0 & 0 \\
a_{21} & a_{22} & a_{23} & 0 \\
0 & a_{32} & a_{33} & a_{34} \\
0 & a_{42} & 0 & a_{44} 
\end{pmatrix}} {\begin{pmatrix}
x_1 \\
x_2 \\
x_3 \\
x_4 \end{pmatrix} } + {\begin{pmatrix}
u_1 \\
0 \\
0 \\
0 \end{pmatrix}}
$$
$$ y=x_1.$$

\end{ex}

From a biological perspective, we think of the vertices as compartments and the unknown parameters as exchange rates between the compartments.  
The off-diagonal entries $a_{ij}$ of $A$ are the instantaneous rates of transfer of material
from the $j$th compartment to the $i$th compartment.  If there is 
no edge $j \to i$, then then is no direct transfer of material from compartment
$j$ to compartment $i$.  In addition,
each compartment is assumed to have a leak, i.e.~an outflow of material
from that compartment outside the system.
In a typical biological setup, the diagonal entries $a_{ii}$ are expressed as the negative sum of the leak, written as $a_{0i}$, and the other entries in the $i$th column, so that $a_{ii}=-a_{0i}-\sum_{j\neq{i}}{a_{ji}}$. 
Hence, for biological applications, we would assume that our matrix $A(G)$ has
nonnegative off-diagonal entries, negative diagonal entries, and with the
leak assumption, it will be strictly diagonally dominant.  

In a
typical setup from a biological application, the graph from
Example \ref{ex:mainex} would have the compartment model representation
in Figure 1b.  The square vertices represent compartments, outgoing arrows
from each compartment represent leaks, the edge with a circle coming out
of compartment $1$ represents the output, 
and the arrowhead pointing into compartment $1$ represents the input.
However, since we will have a leak at every compartment, and always
have the input and output in the same compartments, we will not draw
these features in our graphs throughout the paper.

Since we can only observe the input and output to the system, we are interested in relating these quantities by forming an \textit{input-output equation}, i.e.~an equation purely in terms of input, output, and parameters.  We will use the input-output equation to address the problem of identifiability of the model parameters, although there are other methods to do so, as demonstrated in \cite{Miao2011}.  There have been several methods proposed to find the input-output equations of nonlinear ODE models \cite{Ljung, Meshkat2}, but for linear models the problem is much simpler.

\begin{thm}\label{thm:ioeqn}
Let $A_{1}$ be the submatrix of $A$ obtained by deleting the first row and column
of $A$.     
Let $\tilde{f}$ be the characteristic polynomial of $A$, $\tilde{f}_{1}$ the
characteristic polynomial of $A_{1}$,  $g = \gcd(\tilde{f}, \tilde{f}_{1})$, 
$f = \tilde{f}/g$ and $f_{1} = \tilde{f}_{1}/g$.
Then the input-output equation of the system  (\ref{eq:main}) is
$$
f(\textstyle\frac{d}{dt})y  =  f_{1}(\textstyle\frac{d}{dt}) u_{1}.
$$

In particular, if the characteristic polynomials of $A$ and $A_{1}$ are relatively
prime then, the input-output equation of the system (\ref{eq:main}) is 
\begin{equation} \label{eq:ioeqn}
y^{(n)}+c_1y^{(n-1)}+\cdots+c_ny = u_{1}^{(n-1)}+d_{1}u_{1}^{(n-2)}+\cdots+d_{n-1}u_{1}
\end{equation}
where  $c_1,\ldots,c_n$ are the  coefficients of the characteristic polynomial of $A$
 and  $d_{1},\ldots,d_{n-1}$ are the  coefficients of the characteristic polynomial of $A_{1}$. 
 \end{thm}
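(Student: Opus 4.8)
The plan is to derive the input-output equation by applying the Laplace transform to the system (\ref{eq:main}), solving for the transfer function relating $u_1$ to $y$, and then clearing denominators to obtain a differential equation with polynomial coefficients. First I would take Laplace transforms of both sides of $\dot{x} = Ax + u$, assuming zero initial conditions for simplicity (the initial-condition terms only affect the transient and not the form of the input-output relation). This gives $sX(s) = AX(s) + U(s)$, hence $X(s) = (sI - A)^{-1} U(s)$. Since $y = x_1$ and $U(s) = (U_1(s), 0, \ldots, 0)^T$, we extract $Y(s) = \left[(sI-A)^{-1}\right]_{11} U_1(s)$, so the transfer function is the $(1,1)$ entry of the resolvent.

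The next step is to identify that $(1,1)$ entry via Cramer's rule: $\left[(sI-A)^{-1}\right]_{11} = \frac{\det(sI - A_1)}{\det(sI - A)} = \frac{\tilde{f}_1(s)}{\tilde{f}(s)}$, where $A_1$ is the submatrix of $A$ obtained by deleting the first row and column, since the $(1,1)$ cofactor of $sI-A$ is exactly $\det(sI-A_1)$. Thus $\tilde f(s) Y(s) = \tilde f_1(s) U_1(s)$. Now I would divide both $\tilde f$ and $\tilde f_1$ by their gcd $g$, writing $f = \tilde f/g$ and $f_1 = \tilde f_1/g$; since $g(s)$ is a common polynomial factor on both sides, cancelling it yields $f(s) Y(s) = f_1(s) U_1(s)$, the reduced relation with $\gcd(f, f_1) = 1$. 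Translating back from the $s$-domain to the time domain, multiplication by $s$ corresponds to $\frac{d}{dt}$, giving $f(\frac{d}{dt}) y = f_1(\frac{d}{dt}) u_1$. For the second assertion: if $\tilde f$ and $\tilde f_1$ are already coprime then $g = 1$, $f = \tilde f$ has degree $n$ and is monic, $f_1 = \tilde f_1$ has degree $n-1$ and is monic, and writing out the coefficients gives precisely (\ref{eq:ioeqn}) with the $c_i$ the coefficients of the characteristic polynomial of $A$ and the $d_i$ those of $A_1$.

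The main subtlety — the step I would be most careful about — is justifying that passing from the transfer-function identity in the frequency domain to a genuine differential equation in the time domain is legitimate, and in particular that cancelling the gcd $g$ does not lose information about the input-output behavior. One should note that $f(\frac{d}{dt}) y = f_1(\frac{d}{dt}) u_1$ is the minimal-order differential relation satisfied by all input-output pairs, and that any solution of the reduced equation, together with appropriate initial data, corresponds to a genuine trajectory of (\ref{eq:main}); the factor $g$ corresponds to unobservable/uncontrollable modes that do not appear in the input-output map. A clean way to handle this rigorously is to argue purely algebraically in the ring $\rr(\text{parameters})[\frac{d}{dt}]$ acting on the space of (sufficiently smooth) functions, verifying directly that $\tilde f(\frac{d}{dt}) y = \tilde f_1(\frac{d}{dt}) u_1$ holds for every solution by using the Cayley--Hamilton theorem on $A$, and then observing the common factor can be removed because $\rr[s]$ is a PID and the remaining operators have no common root.
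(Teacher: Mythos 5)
Your proposal is correct and follows essentially the same route as the paper: the paper works formally with the operator $\partial = d/dt$ in place of your Laplace variable $s$, applies Cramer's rule to $(\partial I - A)x = u$, identifies the relevant cofactor with $\det(\partial I - A_1)$ using the structure of $u$, and divides by the gcd, exactly as you do. Your closing remarks on justifying the gcd cancellation go somewhat beyond the paper's (quite informal) argument but do not change the substance.
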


\begin{proof} 

We can re-write our ODE system as:
$$(\partial{I}-A)x=u$$
where $\partial$ is the differential operator $d/dt$.  Using formal manipulations with this operator, we can use Cramer's Rule to get that 
$$x_1=det(A_2)/det(\partial{I}-A)$$
where $A_2$ is the matrix $(\partial{I}-A)$ with the first column replaced by $u$.  Since $u$ has $u_1$ as its first entry and zeros otherwise, then $det(A_2)$ can be simplified as $det(\partial{I}-A_1)u_1$, where $A_1$ is the submatrix of $A$ where the first row and first column have been deleted.  Then replacing $x_1$ with $y$, we get the input-output equation:
$$det(\partial{I}-A)y=det(\partial{I}-A_1)u_1.$$

In other words, $\tilde{f}(\frac{d}{dt})y  =  \tilde{f}_{1}(\frac{d}{dt}) u_{1}$.  Dividing both sides by $g = \gcd(\tilde{f}, \tilde{f}_{1})$, we get $f(\frac{d}{dt})y  =  f_{1}(\frac{d}{dt}) u_{1}$.  

If $g=1$, we have that the input-output equation is of the form:
$$
y^{(n)}+c_1y^{(n-1)}+\ldots +c_ny = u_{1}^{(n-1)}+d_{1}u_{1}^{(n-2)}
+ \ldots +d_{n-1}u_{1}
$$
where the coefficients $c_1,\ldots,c_n$ are the $n$ coefficients of the characteristic polynomial of $A$ and the coefficients $d_{1},\ldots,d_{n-1}$ are the $n-1$ coefficients of the characteristic polynomial of $A_{1}$. 
\end{proof}

\begin{rmk}
We will show in Section \ref{sec:cycles} that the input-output 
equation has the form (\ref{eq:ioeqn})
for generic choices of the parameters $A$
if and only if $G$ is strongly connected.
\end{rmk}

\begin{ex} \label{ex:mainio}
For the graph in Example \ref{ex:mainex}, the input-output equation is:

\begin{align*}
y^{(4)}-E_{1}(a_{11},a_{22},a_{33},a_{44})y^{(3)}  
+ ( E_{2}(a_{11},a_{22},a_{33},a_{44})-a_{12}a_{21}-a_{23}a_{32})y^{(2)} \\
-(E_{3}(a_{11},a_{22},a_{33},a_{44})
-a_{11}a_{23}a_{32}-a_{12}a_{21}a_{33} + a_{23}a_{34}a_{42}-a_{12}a_{21}a_{44}
-a_{23}a_{32}a_{44})y^{'}\\
+(E_{4}(a_{11},a_{22},a_{33},a_{44}) + a_{11}a_{23}a_{34}a_{42}-a_{11}a_{23}a_{32}a_{44}-a_{12}a_{21}a_{33}a_{44})y \\ =  \quad \quad u_{1}^{(3)}-E_{1}(a_{22},a_{33},a_{44})u_{1}^{(2)}
+(E_{2}(a_{22},a_{33},a_{44})-a_{23}a_{32})u_{1}^{'} \\
-(E_{3}(a_{22},a_{33},a_{44})+a_{23}a_{34}a_{42}-a_{23}a_{32}a_{44})u_{1}
\end{align*}
where $E_{k}(z_{1}, \ldots, z_{m})$ denotes the $k$-th elementary symmetric polynomial
in $z_{1}, \ldots, z_{m}$.
\end{ex}

\smallskip

Identifiability of an input-output equation concerns whether
it is possible to recover the parameters of the model (in our
case, the entries of $A$) only observing the relations among the
input and output variables.  In other words, we assume that
we observe specific values of the coefficients $c_{1}, \ldots, c_{n}$
and $d_{1}, \ldots, d_{n-1}$ and we ask whether it is possible
to recover the entries of $A$.   More generally, we can
ask for functions of the parameters $A$ which can be computed 
from $c_{1}, \ldots, c_{n}$
and $d_{1}, \ldots, d_{n-1}$.  Such a function is called 
an identifiable function.  We make these notions precise in 
generality.

\begin{defn}
Let $c$ be a function $c:\Theta\rightarrow{\kk^{m_{2}}}$, where
$\Theta \subseteq \kk^{m_{1}}$ and $\kk$ is a field.
The model parameters in $\Theta$ are \emph{globally identifiable} from $c$ if and only if the map $c$ is injective.  A subset of the model parameters in $\Theta$ are
\emph{locally identifiable} from $c$ if and only if the map $c$ is finite-to-one.  
A subset of the model parameters in $\Theta$ are
\emph{unidentifiable} from $c$ if and only if the map $c$ is 
infinite-to-one.    
\end{defn}

It is often the case that parameters might fail to be identifiable,
but only on a small subset of parameter space.  In this case,
we can say that identifiability holds generically.  For example,
the model parameters in $\Theta$ are \emph{generically globally
identifiable} from $c$ if there is a dense open subset $U$ of
$\Theta$, such that $c : U \rightarrow \kk^{m_{2}}$ is globally identifiable.
Similarly, we can define generically locally identifiable, and
generically unidentifiable.

\begin{rmk}
For brevity, we make the convention for the remainder of the paper
that identifiable means generically locally identifiable and
unidentifiable means generically unidentifiable.
\end{rmk}

We can also speak of identifiability of individual functions.

\begin{defn}
Let $c$ be a function $c:\Theta\rightarrow{\kk^{m_{2}}}$, where
$\Theta \subseteq \kk^{m_{1}}$ and $\kk$ is a field.  A function
$f : \Theta \rightarrow \kk$ is \emph{globally identifiable} from
$c$ if there exists a function $\Phi: \kk^{m_{2}} \rightarrow \kk$
such that $\Phi \circ c  = f$.  The function $f$ is
locally identifiable if there is a finitely multivalued function
$\Phi: \kk^{m_{2}} \rightarrow \kk$
such that $\Phi \circ c  = f$.
\end{defn}

Similarly, we can define generic identifiability of a function.
For brevity, for the remainder of the paper, when we say that
a function is identifiable, we will mean that it is generically
locally identifiable.

\begin{prop}
Suppose that $\Theta$ is a full dimensional subset of $\kk^{m_{1}}$
and $c$ is a rational map.  Then the model is identifiable
from $c$ if and only if the dimension of the image of $c$
is $m_{1}$. 
\end{prop}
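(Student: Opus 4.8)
The plan is to prove this as a standard fact about dominant rational maps between irreducible varieties, applying it to the Zariski closure of $\Theta$. First I would observe that since $\Theta$ is full-dimensional in $\kk^{m_1}$, its Zariski closure $\overline{\Theta}$ is all of $\kk^{m_1}$ (or at least an irreducible variety of dimension $m_1$), so we may think of $c$ as a rational map $c : \kk^{m_1} \dashrightarrow \kk^{m_2}$ defined on a dense open set, and the ``dimension of the image'' means the dimension of $\overline{c(\Theta)}$, the Zariski closure of the image. The key algebraic input is the theorem on fiber dimension: for a dominant morphism of irreducible varieties $\varphi : X \to Y$, a generic fiber has dimension $\dim X - \dim Y$. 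Equivalently, $\dim X = \dim \overline{\varphi(X)} + (\text{generic fiber dimension})$.

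The main steps are then as follows. ($\Leftarrow$) Suppose $\dim \overline{c(\Theta)} = m_1$. Restricting to the dense open subset of $\kk^{m_1}$ where $c$ is defined, $c$ is a dominant morphism onto the irreducible variety $\overline{c(\Theta)}$, which has dimension $m_1 = \dim \kk^{m_1}$. By the fiber dimension theorem the generic fiber has dimension $m_1 - m_1 = 0$, i.e.\ is finite; hence $c$ is finite-to-one on a dense open set, which is exactly generic local identifiability of the model. ($\Rightarrow$) Conversely, if the model is identifiable, then $c$ is finite-to-one on some dense open $U \subseteq \Theta$, so the generic fiber of $c : U \to \overline{c(\Theta)}$ has dimension $0$; the fiber dimension theorem then gives $m_1 = \dim U = \dim \overline{c(\Theta)} + 0$, so the image has dimension $m_1$. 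One should also note that $\dim \overline{c(\Theta)} \le m_2$ automatically since $\overline{c(\Theta)} \subseteq \kk^{m_2}$, and $\le m_1$ since the image of an $m_1$-dimensional variety under a morphism cannot have larger dimension; neither inequality is needed for the argument but they frame why the condition is a genuine constraint.

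The one genuine subtlety — and the step I'd expect to need the most care — is the passage between the geometric statement (about Zariski closures and generic fibers of morphisms of varieties) and the model-theoretic definition of identifiability given earlier (``$c$ is finite-to-one on a dense open set of $\Theta$''). Here one must be slightly careful about the field $\kk$: if $\kk$ is not algebraically closed, ``finite fiber'' over $\overline{\kk}$-points and over $\kk$-points can differ, and ``dense open'' in the Zariski topology versus the Euclidean topology (when $\kk = \rr$) must be reconciled. The clean way to handle this is to pass to the algebraic closure, prove the dimension-theoretic statement there using the fiber dimension theorem for varieties over an algebraically closed field, and then note that dimension of a variety defined over $\kk$ is insensitive to field extension, and that a $\kk$-rational map which is generically finite after base change to $\overline{\kk}$ is generically finite over $\kk$ as well (and, for $\kk = \rr$, Zariski-dense open sets are Euclidean-dense, so generic statements transfer). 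Modulo this bookkeeping, the proposition is an immediate corollary of the fiber dimension theorem.
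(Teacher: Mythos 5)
The paper states this proposition without proof, treating it as a standard fact, so there is no argument of the authors' to compare against; your write-up supplies the justification the paper omits. Your approach---reduce to the fiber dimension theorem for a dominant morphism onto the closure of the image, so that generic fiber dimension equals $m_1 - \dim \overline{c(\Theta)}$, and identify ``generic fiber of dimension zero'' with ``finite-to-one on a dense open set,'' i.e.\ with the paper's definition of (generic local) identifiability---is correct and is the standard proof. Two small points of care, both of which you essentially flag: first, the fiber dimension theorem gives finiteness of fibers only over a dense open subset $V$ of the image, so the dense open set $U \subseteq \Theta$ on which $c$ is finite-to-one should be taken as the preimage of $V$ intersected with the domain of definition of $c$; second, your remarks about passing to $\overline{\kk}$ and reconciling Zariski-dense with Euclidean-dense open sets (relevant here since the paper works over $\rr$ with $\Theta$ a semialgebraic parameter region) are exactly the right bookkeeping and do not affect the dimension count. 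No gaps.
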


Also:

\begin{prop}
Suppose that $\Theta$ is a full dimensional subset of $\kk^{m_{1}}$
and $c$ is a rational map, and $f: \Theta \rightarrow \kk$
is a function.  Then $f$ is identifiable if and only if
$$\kk(f, c_{1}, \ldots, c_{m_{1}})/ \kk( c_{1}, \ldots, c_{m_{1}})$$
is a finite degree field extension.
\end{prop}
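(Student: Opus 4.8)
The plan is to recast identifiability of $f$ as the statement that a coordinate projection of two irreducible varieties is generically finite, and then apply the standard correspondence between generically finite dominant morphisms and finite function-field extensions.

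First I would set up the geometric picture. Since $\Theta$ is full dimensional in $\kk^{m_1}$, its function field is the rational function field $K = \kk(t_1,\ldots,t_{m_1})$, and $\kk^{m_1}$ is irreducible, so ``generic'' statements about $\Theta$ are statements about $\kk^{m_1}$. As $c$ is rational, its coordinates give elements $c_1,\ldots,c_{m_2}\in K$, and $L := \kk(c_1,\ldots,c_{m_2})\subseteq K$ is the function field of the irreducible variety $Y := \overline{c(\Theta)}$, with $c$ inducing a dominant rational map from $\Theta$ to $Y$. The notation $\kk(f,c_1,\ldots,c_{m_2})$ only makes sense once $f$ lies in a common field with the $c_i$, so I would take $f\in K$ (that is, $f$ rational, which is the only case arising in this paper). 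Then $(f,c)\colon \Theta\to\kk^{1+m_2}$ has image whose closure $X$ is an irreducible variety with function field $\kk(f,c_1,\ldots,c_{m_2}) = L(f)$, and the coordinate projection $\pi\colon X\to Y$ deleting the $f$-coordinate is dominant with $\pi\circ(f,c)=c$.

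Next I would invoke the key fact from algebraic geometry: a dominant morphism $\pi\colon X\to Y$ of irreducible varieties is generically finite if and only if $\kk(X)/\kk(Y)$ is a finite extension. Here $\kk(X)=L(f)$ is generated over $\kk(Y)=L$ by the single element $f$, so this is equivalent to $f$ being algebraic over $L$, and, since $L(f)/L$ is finitely generated, to $L(f)/L$ having finite degree. Thus it remains to prove that $f$ is identifiable from $c$ if and only if $\pi$ is generically finite. For the ``if'' direction, I would choose a dense open $V\subseteq Y$ over which $\pi$ has finite fibers; then $p\mapsto\{\,\text{first coordinates of the points of }\pi^{-1}(p)\,\}$ is a finitely multivalued function $\Phi$ on $V$, extended arbitrarily off $V$, and whenever $c(\theta)\in V$ we have $(f(\theta),c(\theta))\in X$, hence $f(\theta)\in\Phi(c(\theta))$; restricting to the dense open $U\subseteq\Theta$ where $c$ and $f$ are defined and $c(\theta)\in V$ yields $\Phi\circ c=f$ on $U$, so $f$ is identifiable. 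Conversely, if $\Phi\circ c=f$ on a dense open $U$ with $\Phi$ finitely multivalued, then for a generic $p\in c(\Theta)$ every $\theta'\in U$ with $c(\theta')=p$ satisfies $f(\theta')\in\Phi(p)$, a finite set, so the fiber $\pi^{-1}(p)$, being the closure of $\{(f(\theta'),p):\theta'\in U,\ c(\theta')=p\}$, is finite; hence $\pi$ is generically finite.

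I expect the only genuine work to be bookkeeping with dense open sets rather than any substantive difficulty: one must check that ``generic'' can be taken uniformly over the finitely many loci involved — the domains of definition of the rational maps $c$ and $f$, the locus where $\pi$ has finite fibers, and a dense open of $Y$ contained in the constructible set $c(\Theta)$ — using that finite intersections of dense opens of an irreducible variety are dense open, that $c$ and $(f,c)$ are dominant onto $Y$ and $X$, and Chevalley's theorem for the image. It is also worth recording explicitly why ``algebraic'' upgrades to ``finite degree'' (finite generation of $L(f)/L$), and noting that the result is consistent with the preceding proposition: $\dim Y=m_1$ iff every coordinate $t_i$ is algebraic over $L$, iff every $t_i$ is identifiable, iff the whole model is identifiable.
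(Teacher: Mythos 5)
The paper states this proposition without proof, treating it as a standard fact, so there is no argument of the authors' to compare yours against; your write-up is a correct and essentially the standard justification. Recasting identifiability of $f$ as generic finiteness of the projection $\pi\colon X\to Y$ that forgets the $f$-coordinate, and then invoking the dictionary between generically finite dominant maps and finite function-field extensions $\kk(X)/\kk(Y)=L(f)/L$, is exactly the right mechanism, and your observation that "algebraic over $L$" upgrades to "finite degree" because $L(f)$ is generated by the single element $f$ closes the loop. Three small remarks. First, you are right that the statement only parses when $f$ lies in the rational function field (and that the paper's subscript $m_1$ on the last $c$ should be $m_2$); making that hypothesis explicit is harmless here since every identifiable function considered in the paper is polynomial in the entries of $A$. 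Second, in the converse direction the assertion that $\pi^{-1}(p)$ \emph{equals} the closure of $\{(f(\theta'),p)\}$ needs one more line: for generic $p$ every component of $\pi^{-1}(p)$ has dimension $\dim X-\dim Y$, while the components of $X\setminus X^{\circ}$ (the boundary of the constructible image) meet the fiber in strictly smaller dimension, so no component of the fiber can be lost; alternatively you can sidestep this entirely by noting that finiteness of $\pi^{-1}(p)\cap X^{\circ}$ for a dense set of $p$ forces $\dim X=\dim Y$ by the fiber-dimension theorem applied to a dense open of $X$ contained in $X^{\circ}$, which already gives $f$ algebraic over $L$. Third, the paper works over $\kk=\rr$, which is not algebraically closed, so Chevalley's theorem and the fiber-dimension theorem should formally be applied after complexifying (or replaced by their semialgebraic analogues via Tarski--Seidenberg); this does not affect the conclusion and is well below the level of rigor the paper itself adopts.
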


Identifiability of the map $c$ or a function $f$ can
be tested in specific instances using Gr\"obner basis calculations.  See e.g.~\cite{Garcia2010,Meshkat}.

In the setting of linear compartment models, we have a graph
$G$ with $n$ vertices and $m$ directed edges.
The parameter space
$\Theta \subseteq \rr^{m +n}$ consists of those matrices
whose zero pattern is induced by the graph $G$, positive off-diagonal
entries, negative diagonal entries, and strictly diagonally dominant.
The map $c: \Theta \rightarrow \rr^{2n-1}$ is the map
that takes a matrix $A \in \Theta$ to the vector 
$$(c_{1}(A), \ldots,
c_{n}(A), d_{1}(A), \ldots, d_{n-1}(A))$$
of characteristic polynomial coefficients.
The map $c: \Theta \rightarrow \rr^{2n-1}$ is called the 
\emph{double characteristic polynomial map}.

\begin{ex}\label{ex:mainident} 
For the graph in Example \ref{ex:mainex}, 
the image of the double characteristic polynomial map has
dimension seven.  A set of seven algebraically independent
identifiable functions is
$\left\{a_{11}, \: a_{22}, \: a_{33}, \: a_{44}, \: a_{12}a_{21}, \: a_{23}a_{32}, \: a_{23}a_{34}a_{42}\right\}$.  
For example, $a_{12}a_{21}$ is identifiable since, for this
graph
$$
a_{12}a_{21}  =  d_{2} - c_{2} + c_{1}d_{1} - d_{1}^{2}.
$$
It is easy to see
that these functions are algebraically independent (each involves
a new indeterminate).
The fact that they are identifiable follows from the material in
Sections \ref{sec:cycles}, \ref{sec:mono}, and \ref{sec:dim}.
\end{ex}

The linear compartment models that we focus on in the present paper (that is,
where the diagonal of $A$ contains algebraically independent parameters,
or equivalently, every compartment has a leak)
are never identifiable, except in the
trivial case of a graph with one vertex.  This will
be explained in detail in the subsequent sections.  This,
however, forces us to look for identifiable 
reparametrizations of our model.

\begin{defn}
A reparametrization of the input-output equation of a model is a 
map $q: \rr^{m_{3}} \rightarrow \rr^{m_{1}}$
such that the image of $c \circ q$ equals the image of $c$.
The reparametrization is identifiable if the composed map $c \circ q$
is identifiable.
\end{defn}

Since the new parametrization is $c \circ q$, there must exist a map $\Phi: {\rm im}\, c  \rightarrow \rr^{m_{3}}$ which is the (local)
inverse of $c \circ q$.  Since $q$ must be locally injective, this
implies that the map $\Phi$ consists of identifiable functions of the map $c$.
This argument is also reversible (e.g.~by the implicit function theorem).
Hence, finding an identifiable reparametrization is, from a theoretical
standpoint, equivalent to finding $d$ algebraically
independent functions $f_{1}, \ldots, f_{d}$ that are identifiable from $c$, where $d=\dim {\rm im} \, c$.
Once these identifiable functions $f$ are found, the goal is to determine a reparametrization of our original model that yields an input-output equation with coefficients $c \circ q$.  In summary:

\begin{prop}\label{prop:rewrite}
A reparametrization $q: \rr^{m_{3}} \rightarrow \rr^{m_{1}}$ of 
$c : \rr^{m_{1}} \rightarrow \rr^{m_{2}}$ is identifiable 
if and only if there is a map $\Phi: {\rm im}\, c  \rightarrow \rr^{m_{3}}$
such that $\Phi \circ c = f$ consists of identifiable functions
from $c$.
\end{prop}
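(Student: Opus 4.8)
The statement to prove is Proposition~\ref{prop:rewrite}, which asserts that a reparametrization $q \colon \rr^{m_3} \to \rr^{m_1}$ of $c$ is identifiable if and only if there is a map $\Phi \colon \operatorname{im} c \to \rr^{m_3}$ with $\Phi \circ c = f$ a tuple of identifiable functions from $c$. My plan is to unwind the two definitions involved---``reparametrization'' and ``identifiable''---and show each yields the desired equivalence almost formally, with the only real content being an application of the implicit function theorem (or, in the algebraic setting, a transcendence-degree / generic-smoothness argument) to pass between local invertibility and the existence of the map $\Phi$.

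First, for the forward direction, suppose $q$ is an identifiable reparametrization. By definition this means two things: $\operatorname{im}(c \circ q) = \operatorname{im}(c)$, and $c \circ q$ is identifiable, i.e.\ (under our standing convention) generically finite-to-one. The plan is to observe that a generically finite-to-one rational map from $\rr^{m_3}$ has image of dimension $m_3$, so $\dim \operatorname{im} c = \dim \operatorname{im}(c \circ q) = m_3$; this forces $q$ itself to be generically finite-to-one onto a dense subset of a component of $\operatorname{im} c$'s preimage structure---more carefully, one argues that over a dense open subset $U$ of $\operatorname{im} c$, the map $c \circ q$ admits a finitely-multivalued local inverse $\Psi \colon U \to \rr^{m_3}$. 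Setting $\Phi = \Psi$ and $f = \Phi \circ c$, each coordinate $f_i = \Phi_i \circ c$ is by construction a finitely-multivalued function factoring through $c$, hence identifiable; and the $f_i$ are algebraically independent because $\Psi$ is a local section so its coordinates generate a field of transcendence degree $m_3 = \dim \operatorname{im} c$ over $\rr$. This is exactly the ``argument is also reversible (e.g.~by the implicit function theorem)'' sentence in the text, made precise.

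For the converse, suppose such a $\Phi$ exists with $\Phi \circ c = f$ identifiable (meaning $f$ is a tuple of $d = \dim \operatorname{im} c$ algebraically independent identifiable functions). The plan is to build $q$ from $\Phi$: since $f = \Phi \circ c$ has $d$ algebraically independent coordinates, the map $f \colon \Theta \to \rr^{d}$ is dominant onto a $d$-dimensional set, hence generically finite-to-one after restricting to a suitable $d$-dimensional slice---but what we actually need is to produce $q \colon \rr^{m_3} \to \rr^{m_1}$ with $\operatorname{im}(c \circ q) = \operatorname{im} c$ and $c \circ q$ finite-to-one. Here I would invoke the discussion immediately preceding the proposition: identifiable functions of $c$ are, generically, exactly those that can be recovered from the coefficients, and the standard construction is to let $q$ parametrize a generic fiber-transversal; concretely, pick $q$ so that $\Phi \circ c \circ q = \operatorname{id}$ on a dense open set (this is where the implicit function theorem enters: $\Phi$ is a local inverse, so a local right-inverse $q$ of $\Phi$ composed appropriately exists), which simultaneously forces $c \circ q$ to be injective on that open set (identifiability) and $\operatorname{im}(c\circ q) = \operatorname{im} c$ (since $\Phi$ was defined on all of $\operatorname{im} c$ and $q$ hits enough of the domain).

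The main obstacle I anticipate is being careful about the word ``generically'' and about multivaluedness: $\Phi$ is only a finitely-multivalued map defined on a dense open subset of $\operatorname{im} c$, and $q$ need only work generically, so the clean statement ``$\Phi$ and $c \circ q$ are mutually inverse'' has to be phrased as ``mutually inverse as dominant rational maps between varieties of the same dimension, up to finite covers.'' The cleanest route is probably to phrase everything in terms of function fields: an identifiable reparametrization corresponds to $\rr(c \circ q)$ being a finite extension of $\rr$ of transcendence degree $m_3$, the reparametrization condition gives $\rr(c \circ q) \subseteq \overline{\rr(c)}$ with equal transcendence degree $d$, hence $m_3 = d$; and the existence of $\Phi$ with $\Phi \circ c = f$ algebraically independent is exactly the statement that $\rr(f) \subseteq \rr(c)$ has transcendence degree $d$. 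Matching these two field-theoretic conditions is then a routine application of the primitive element theorem and the correspondence between dominant rational maps and function field extensions, which is why the paper states the result without proof; I would give the one-paragraph function-field argument and cite the implicit function theorem for the analytic local-inverse statement.
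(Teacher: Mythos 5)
Your argument is correct and is essentially the same as the paper's own justification, which consists only of the informal paragraph preceding the proposition: the local inverse $\Phi$ of $c\circ q$ exists on $\operatorname{im} c$ because $c\circ q$ is generically finite-to-one, its composition with $c$ factors through $c$ and is therefore identifiable, and the converse is the reversal of this via the implicit function theorem. Your function-field/transcendence-degree phrasing is a sensible way to make the "generically" and "finitely-multivalued" caveats precise, but it is the same underlying dimension-counting argument, not a different route.
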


A more subtle, and not quite mathematical, issue is that
we want our reparametrization to both involve only relatively
simple functions, and have an intuitively simple connection to our
original model, without dramatic shifts in the parametrization.  
A common way to find
such a  reparametrization is via a rational scaling of the state variables 
\cite{Chappell, Evans, Meshkat}, while other methods, e.g. affine maps of the state variables, can also be employed \cite{Denis-Vidal}.  A scaling reparametrization is preferred over a more complicated type of reparametrization since it respects the biological properties of the original model.  As we will see,
when identifiable rescalings exist, they can always be made rational.
For example, we would like to find a scaling:
$$X_i=f_i(A)x_i$$
such that the reparametrized model is identifiable, i.e.~purely in terms of a fewer number of identifiable functions of parameters.  Since $y=x_1$ is observed, we require that $f_1(A)=1$.  In this way, the input and output variables remain intact.
Scaling reparametrizations have the effect of nondimensionalizing the
quantities that are being rescaled.  That is, from input-output data
we would not be able to estimate the values of those unobserved variables,
but we can predict how their relative size changes as we change parameters.   

The rescaling induced by the functions $f_{1}, \ldots, f_{n}$ maps the matrix   $A$ to $DAD^{-1}$, where $D=Diag(f_i(A))$.  In other words, the entries of $A$ become:
$$a_{ij}\mapsto{a_{ij}f_i(A)/f_j(A)}$$
For this reparametrization to be identifiable, this means that the new coefficients of the state variables,
$$a_{ij}f_i(A)/f_j(A)$$
are themselves functions of identifiable functions of parameters by Proposition \ref{prop:rewrite}.

\begin{ex} \label{ex:mainreparam} 
From the graph in Example \ref{ex:mainex}, a possible rescaling is $X_1=x_1,X_2=a_{12}x_2, X_3=a_{12}a_{23}x_3,X_4=a_{12}a_{23}a_{34}x_4$.
This yields the reparametrized system:
$$
\begin{pmatrix} 
\dot{X}_1 \\
\dot{X}_2 \\
\dot{X}_3 \\ 
\dot{X}_4 \end{pmatrix} = {\begin{pmatrix} 
a_{11} & 1 & 0 & 0 \\
a_{12}a_{21} & a_{22} & 1 & 0 \\
0 & a_{23}a_{32} & a_{33} & 1 \\
0 & a_{23}a_{34}a_{42} & 0 & a_{44} 
\end{pmatrix}} {\begin{pmatrix}
X_1 \\
X_2 \\
X_3 \\
X_4 \end{pmatrix} } + {\begin{pmatrix}
u_1 \\
0 \\
0 \\
0 \end{pmatrix}}
$$
$$ y=X_1,$$
which is identifiable since each coefficient is a function of
the seven identifiable functions from Example \ref{ex:mainident}.
We will see in Section \ref{sec:mono} exactly how this reparametrization can be found.

\end{ex}

Let $r :  \rr^{n +m}  \rightarrow \rr^{n+m}$ be the rescaling map
associated to the functions $f_{i}$:
$$
r_{ij}(A)  =  a_{ij}f_i(A)/f_j(A).
$$

\begin{prop}  \label{prop:dimdrop} The dimension of the image of the rescaling map $r$ is greater than or equal to $ \dim \Theta - (n-1)$.
\end{prop}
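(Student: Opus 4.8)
The plan is to realize the rescaling map $r$ as a composition $r = c' \circ \rho$, where $\rho : \rr^{n+m} \to \rr^{n+m}$ is the map $A \mapsto DAD^{-1}$ with $D = \mathrm{Diag}(f_1(A), \ldots, f_n(A))$, and then bound $\dim \mathrm{im}\, r$ below by understanding the fibers of $\rho$ over its image. The key observation is that conjugation by a diagonal matrix is highly constrained: if we instead let $G$ act on $\Theta$ by $A \mapsto \Lambda A \Lambda^{-1}$ for $\Lambda = \mathrm{Diag}(\lambda_1, \ldots, \lambda_n)$ ranging over the $n$-dimensional torus $(\kk^*)^n$, then the orbit of a generic matrix $A$ has dimension exactly $n-1$ (not $n$, since scalar matrices $\Lambda = \lambda I$ act trivially, and for a strongly connected $G$ that is the only kinematic degeneracy). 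So the map $A \mapsto \Lambda A \Lambda^{-1}$, viewed as a map $(\kk^*)^n \times \Theta \to \Theta$, has image of dimension $\dim \Theta$, but each fiber of the "orbit projection" collapses a space of dimension $n-1$.

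First I would make precise that $r$ factors through $\Theta$: since $r_{ij}(A) = a_{ij} f_i(A)/f_j(A)$ is exactly the $(i,j)$ entry of $DAD^{-1}$ with $D = \mathrm{Diag}(f_i(A))$, the image of $r$ is contained in the union of conjugates $\bigcup_{A \in \Theta} D(A) A D(A)^{-1}$, which in turn is contained in $\bigcup_{A \in \Theta, \Lambda \in (\kk^*)^n} \Lambda A \Lambda^{-1}$. Now I would argue that this last set, call it $\mathcal{O}$, has dimension at least $\dim \Theta$: it contains $\Theta$ itself (take $\Lambda = I$), so in fact $\dim \mathcal{O} \geq \dim \Theta$; but that is the wrong direction. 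Instead the right statement is that the \emph{generic fiber} of $r$ — that is, the set of $A' \in \Theta$ mapping to a fixed generic value of $r$ — is contained in a torus orbit $\{\Lambda A \Lambda^{-1}\}$, which has dimension $n-1$. Hence by the fiber dimension theorem, $\dim \mathrm{im}\, r \geq \dim \Theta - (n-1)$, which is the claim.

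The main step to nail down carefully is the fiber inclusion: if $r(A) = r(B)$ for generic $A, B \in \Theta$, then $D(A) A D(A)^{-1} = D(B) B D(B)^{-1}$, so $B = (D(B)^{-1} D(A)) A (D(B)^{-1} D(A))^{-1}$, i.e.\ $B = \Lambda A \Lambda^{-1}$ for the diagonal matrix $\Lambda = D(B)^{-1}D(A)$. Thus every element of the fiber of $r$ through $A$ lies in the torus orbit of $A$, and conversely one checks $r$ is constant on torus orbits (since $D(\Lambda A \Lambda^{-1})$ can differ from $D(A)$ only by a further diagonal conjugation that cancels). So the fibers of $r$ are unions of torus orbits; a generic torus orbit in $\Theta$ has dimension exactly $n-1$ because the stabilizer of a generic matrix $A$ (with $G$ strongly connected, so $A$ is irreducible) under diagonal conjugation is precisely the scalars $\lambda I$, giving a $1$-dimensional stabilizer inside the $n$-dimensional torus. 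The hard part will be this last dimension count of the orbit — specifically verifying that for generic $A$ the only diagonal $\Lambda$ with $\Lambda A \Lambda^{-1} = A$ are the scalar matrices. This follows from strong connectivity of $G$: $\Lambda A \Lambda^{-1} = A$ forces $\lambda_i a_{ij} = a_{ij}\lambda_j$, hence $\lambda_i = \lambda_j$ whenever $a_{ij} \neq 0$, i.e.\ whenever $j \to i$ is an edge; since $G$ is (weakly) connected this forces all $\lambda_i$ equal. Once that is in hand, the fiber dimension theorem for dominant rational maps of irreducible varieties gives $\dim \Theta = \dim \mathrm{im}\, r + \dim(\text{generic fiber}) \leq \dim \mathrm{im}\, r + (n-1)$, completing the proof.
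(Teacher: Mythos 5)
Your argument is correct, but it takes a genuinely different route from the paper. The paper's proof is a direct Jacobian computation: after taking logarithms it writes the Jacobian of the rescaling as $D + J\cdot E(G)$ with $D$ diagonal and invertible, and concludes
$\operatorname{rank} J(\varphi)\ \geq\ \operatorname{rank}(D)-\operatorname{rank}(J\cdot E(G))\ \geq\ (m+n)-(n-1)$
using $\operatorname{rank} E(G)\leq n-1$. You instead bound the fibers of $r$: from $r(A)=r(B)$, i.e.\ $D(A)AD(A)^{-1}=D(B)BD(B)^{-1}$, you get $B=\Lambda A\Lambda^{-1}$ with $\Lambda=D(B)^{-1}D(A)$ diagonal, so every fiber of $r$ is contained in a single diagonal-conjugation orbit; since the scalar matrices always lie in the stabilizer, such an orbit has dimension at most $n-1$, and the lower-semicontinuity half of the fiber-dimension theorem (every nonempty fiber of a dominant map has dimension at least $\dim\Theta-\dim\operatorname{im} r$) yields the claim. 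Two remarks. First, the only part of your orbit analysis that is actually needed is the upper bound $n-1$, which already follows from the stabilizer containing the scalars; the identification of the exact generic stabilizer via connectivity of $G$, and the converse claim that $r$ is constant on orbits (which is not obvious for arbitrary $f_i$ and is never used), can both be dropped. Second, the two proofs rest on the same underlying fact --- the all-ones vector lies in the left kernel of $E(G)$, equivalently the scalars act trivially by conjugation --- but your version is more conceptual at the cost of requiring $r$ to be a rational (or at least algebraic) map so that the fiber-dimension theorem applies, whereas the paper's rank computation goes through for any differentiable scaling functions $f_i$. Within the paper's setting of rational scalings this is not a restriction, so your proof is a valid alternative.
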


\begin{proof} 
We can calculate this by finding the Jacobian.  To simplify the calculation we first take the logarithm and call this function $\varphi_{ij}$:
$$\varphi_{ij}=log(f_i(A))-log(f_j(A))+log(a_{ij})$$
Then the derivative is:
$$
\frac{\partial{\varphi_{ij}}}{\partial{a_{kl}}}=
\frac{1}{f_i(A)}\frac{\partial{f_i(A)}}{\partial{a_{kl}}}-
\frac{1}{f_j(A)}\frac{\partial{f_j(A)}}{\partial{a_{kl}}}+
\delta_{kl,ij}\frac{1}{a_{ij}}
$$
Thus, the Jacobian $J(\varphi)$ can be written as:
$$
Diag(\frac{1}{a_{ij}})+J(a_{ij}\mapsto{log(f_i(A))})\cdot E(G)
$$
where $J(a_{ij}\mapsto{log(f_i(A))})$ is the Jacobian of the mapping $a_{ij}\mapsto{log(f_i(A))}$ and $E(G)$ is the $n$ by $m$ incidence matrix (defined in Section \ref{sec:cycles}, see Eq (\ref{eq:eg})).  We can write this in shorthand notation as:
$$
J(\varphi) = D + J \cdot E(G)
$$
Then we have that $rank(J(\varphi))\geq{rank(D)-rank(J \cdot E(G))}$.  
From Proposition \ref{prop:dimeg}, we have that ${\rm rank}(E(G)) \leq n-1 $, which implies ${\rm rank}(J\cdot E(G))\leq n-1 $, and thus:
$$
{\rm rank}(J(\varphi))\geq m+n - (n-1) = m+1
$$
In other words, the dimension drops by at most $n-1$.  
\end{proof}

This theorem gives us the maximal number of parameters of a model with an
identifiable scaling reparametrization.

\begin{cor} \label{cor:maxedges} 
Let $G$ be a graph for which an identifiable scaling reparametrization of
the system (\ref{eq:main}) exists.  Then $G$ has at most $2n -2$ edges.
\end{cor}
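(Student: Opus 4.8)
The plan is to read the bound off of Proposition~\ref{prop:dimdrop} together with the trivial fact that the double characteristic polynomial map $c$ takes values in $\rr^{2n-1}$. The idea is that an identifiable scaling reparametrization cannot collapse the parameter space below dimension $\dim\mathrm{im}\,c$, while Proposition~\ref{prop:dimdrop} guarantees that a rescaling loses at most $n-1$ dimensions; since $\dim\Theta=m+n$, this already pins $\dim\mathrm{im}\,c$ from below in terms of $m$, and the ambient space $\rr^{2n-1}$ pins it from above in terms of $n$.

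Concretely, I would first record that the rescaling map does not change the input-output equation. Writing $r(A)=DAD^{-1}$ with $D=\mathrm{Diag}(f_1(A),\dots,f_n(A))$ and $f_1\equiv 1$, conjugation by $D$ fixes the characteristic polynomial of $A$; moreover deleting the first row and column commutes with this conjugation (because $f_1\equiv 1$), so $(DAD^{-1})_1=D_1A_1D_1^{-1}$ with $D_1=\mathrm{Diag}(f_2(A),\dots,f_n(A))$, and hence the characteristic polynomial of $A_1$ is fixed as well. Thus $c\circ r=c$, so $\mathrm{im}\,c=c(\mathrm{im}\,r)$ and in particular $\dim\mathrm{im}\,c\le\dim\mathrm{im}\,r$. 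Next I would unwind what identifiability of the scaling reparametrization means: the reparametrized model is the one whose parameter vector ranges over (the closure of) $\mathrm{im}\,r$, with input-output coefficients given by the restriction $c|_{\mathrm{im}\,r}$, so the reparametrization is identifiable exactly when $c|_{\mathrm{im}\,r}$ is generically finite-to-one, which forces $\dim\mathrm{im}\,c=\dim c(\mathrm{im}\,r)=\dim\mathrm{im}\,r$. Finally, Proposition~\ref{prop:dimdrop} gives $\dim\mathrm{im}\,r\ge\dim\Theta-(n-1)=(m+n)-(n-1)=m+1$, and $c:\Theta\to\rr^{2n-1}$ gives $\dim\mathrm{im}\,c\le 2n-1$; chaining, $m+1\le\dim\mathrm{im}\,c\le 2n-1$, i.e.\ $m\le 2n-2$.

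The only step that takes genuine care is the middle one: translating "the scaling reparametrization is identifiable" into "$c|_{\mathrm{im}\,r}$ is generically finite-to-one," which means matching the formal definition of an identifiable reparametrization $q:\rr^{m_3}\to\rr^{m_1}$ (with $\mathrm{im}(c\circ q)=\mathrm{im}\,c$ and $c\circ q$ finite-to-one, whence $m_3=\dim\mathrm{im}\,c$) against the concrete picture of rescaling the state variables by the $f_i$, using that $\Theta$ is full dimensional so that generic dimension counts may be computed on all of $\rr^{m+n}$. Once that bookkeeping is in place, everything else is the dimension count above; note in fact that only the inequality $\dim\mathrm{im}\,c\ge\dim\mathrm{im}\,r$ is used, the reverse being automatic from $\mathrm{im}\,c=c(\mathrm{im}\,r)$.
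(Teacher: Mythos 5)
Your proof is correct and follows essentially the same route as the paper's: the lower bound $\dim\mathrm{im}\,r\ge m+1$ from Proposition~\ref{prop:dimdrop}, the upper bound $2n-1$ from the ambient target of $c$, and the identifiability hypothesis to tie $\dim\mathrm{im}\,r$ to $\dim\mathrm{im}\,c$. Your explicit observation that $c\circ r=c$ (conjugation by the diagonal matrix $D$ preserves both characteristic polynomials) is a nice piece of bookkeeping that the paper leaves implicit, but it does not change the argument.
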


\begin{proof}  Proposition \ref{prop:dimdrop} gives us the minimal dimension of the image of $r$, which is $\dim \Theta - (n-1) = m+1$.  
The dimension of the image of $c$ is at most $2n-1$.  Since an identifiable reparametrization means that the dimension of the image of $c \circ q$ is the dimension of the image of $c$, the dimension of the image of $r$ must be at most $2n-1$. Thus $m$ is at most $2n-2$.
\end{proof}

This leads us to the main problem to be studied in the remainder 
of the paper:

\begin{pr}\label{pr:reparam}
For which graphs $G$ with $n$ vertices and $\leq 2n-2$ edges
does there exist a generically locally identifiable scaling 
reparameterization
of the system (\ref{eq:main}) associated to the graph $G$?
\end{pr}

Since local identifiability is 
completely determined by dimension, we can 
break the problem into three parts:

\begin{enumerate}
\item  Determine the dimension $d$ of the image of the double characteristic polynomial map $c$ as a function of the graph $G$.
\item  Find a set of $d$ algebraically independent identifiable functions from $c$.
\item  Find an identifiable reparametrization of the ODE system, using the
$d$ algebraically independent functions. 
\end{enumerate}

It is these three problems which we address in the subsequent sections.


\section{Cycles and Monomials}\label{sec:cycles}

In this section, we begin to relate the study of Problem \ref{pr:reparam}
to the particular structure of the graph $G$.  The cycles in 
$G$ play a crucial role, because of their appearance in the 
calculation of the characteristic polynomial.  This section
describes this relationship and relates the structure of
cycles in the graph to the problem of finding identifiable
reparametrizations.  The connection between
the cycle structure in the graph $G$ and identifiability
of the associated linear compartment model has been employed 
in other works (see \cite{Audoly, Godfrey}).  However, our paper
appears to be the first to use this structure to  study the
existence of identifiable reparametrizations.

\begin{defn} A \emph{closed path} in a directed graph $G$ is a sequence of 
vertices $i_{0},i_{1}, i_{2}, \ldots, i_{k}$ with $i_{k} = i_{0}$ and
such that $i_{j} \to i_{j+1}$ is an edge for all $j = 0, \ldots, k-1$.
A \emph{cycle} in $G$ is a closed path with no repeated vertices.
To a cycle $C = i_{0},i_{1}, i_{2}, \ldots, i_{k}$, we associate the
monomial $a^{C} = a_{i_{1}i_{2}}a_{i_{2}i_{3}}\cdots a_{i_{k}i_{1}}$,
which we refer to as a \emph{monomial cycle}.
\end{defn}

Note that the diagonals of $A$ are monomial $1$-cycles from the graph $G$.

\begin{thm} \label{thm:funccycles} 
The coefficients $c_i$ of the characteristic polynomial $\lambda^n+\sum_{i=1}^{n}{c_{i}\lambda^{n-i}}$ of $A$ are polynomial functions in terms of the monomial cycles, $a^{C} = a_{i_{1}i_{2}}a_{i_{2}i_{3}}\cdots a_{i_{k}i_{1}}$, of the graph $G$.  

Specifically, let $\calc(G)$ be the set of all cycles in $G$.
Then 
$$c_{i} =  (-1)^i\sum_{C_{1}, \ldots, C_{k} \in \calc(G)} \prod_{j = 1}^{k}  {\rm sign}(C_{j}) a^{C_{j}},$$
where the sum is over all collections of vertex disjoint cycles
involving exactly $i$ edges of $G$, and ${\rm sign}(C) = 1$ if
$C$ is odd length and ${\rm sign}(C) = -1$ if $C$ is even length.
\end{thm}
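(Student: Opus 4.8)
The plan is to prove this via the standard combinatorial expansion of the determinant, specialized to the matrix $\partial I - A$ (or, for the purposes of the characteristic polynomial, the matrix $\lambda I - A$), and to reorganize the resulting sum according to the cycle structure of $G$. First I would recall the Leibniz formula for $\det(\lambda I - A) = \sum_{\sigma \in S_n} \mathrm{sign}(\sigma) \prod_{i=1}^n (\lambda I - A)_{i,\sigma(i)}$, and decompose each permutation $\sigma$ into its disjoint cycle decomposition. Fixed points of $\sigma$ contribute factors of the form $\lambda - a_{ii}$ (a diagonal entry, which by the remark after the definition is precisely a monomial $1$-cycle of $G$, up to sign), while a nontrivial cycle $(i_0\, i_1\, \cdots\, i_{\ell-1})$ of $\sigma$ contributes $\pm a_{i_0 i_1} a_{i_1 i_2}\cdots a_{i_{\ell-1} i_0}$ provided every consecutive pair is an edge of $G$ — otherwise the term vanishes because the corresponding off-diagonal entry of $A$ is $0$. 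Thus only those $\sigma$ whose nontrivial cycles are all genuine cycles of $G$ survive, and such $\sigma$ are in bijection with collections of vertex-disjoint cycles of $G$ (the remaining vertices being fixed points).

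Next I would carefully track the signs and the powers of $\lambda$. A permutation $\sigma$ with cycle type consisting of nontrivial cycles $C_1, \ldots, C_k$ of lengths $\ell_1, \ldots, \ell_k$ (using $i := \ell_1 + \cdots + \ell_k$ edges total) and $n - i$ fixed points contributes, after expanding the product over the diagonal factors $\prod (\lambda - a_{jj})$ and extracting the term of $\lambda$-degree $n - i$, exactly the monomial $\prod_{j} a^{C_j}$ times the appropriate sign. The sign is assembled from three sources: $\mathrm{sign}(\sigma) = \prod_j (-1)^{\ell_j - 1}$; the factor $(-1)^{\ell_j}$ picked up from each off-diagonal entry being $-a_{i_a i_b}$ in $\lambda I - A$ rather than $+a_{i_a i_b}$; and the factor $(-1)^{?}$ coming from choosing the $-a_{jj}$ term in $i$ of the... actually more cleanly: the total number of off-diagonal $-1$'s and fixed-point $-a_{jj}$'s chosen is $i$ (each nontrivial cycle of length $\ell_j$ uses $\ell_j$ off-diagonal entries, and one chooses $i$ of the diagonal minus-signs to reach degree $n-i$), contributing $(-1)^i$ overall, matching the prefactor $(-1)^i$ in the claimed formula; and the residual sign per cycle is $(-1)^{\ell_j - 1} = \mathrm{sign}(C_j)$ as defined (so $+1$ for odd length, $-1$ for even length). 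I would verify this bookkeeping against the coefficient $c_1 = -(a_{11}+\cdots+a_{nn})$ and $c_2$ using Example \ref{ex:mainio} as a sanity check.

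The main obstacle I anticipate is precisely this sign computation: disentangling the three independent sources of signs (the sign of the permutation, the $-1$ per off-diagonal entry of $\lambda I - A$, and the combinatorics of which $\lambda$-versus-$a_{jj}$ choices are made in the diagonal factors) and confirming they collapse to exactly $(-1)^i \prod_j \mathrm{sign}(C_j)$. It is easy to be off by a global sign or to mis-assign a per-cycle versus global factor, so I would organize the argument so that the sign from each disjoint cycle is computed locally and independently, and then the global $(-1)^i$ emerges cleanly as the count of total edges used across all cycles. A secondary, more minor point is to make explicit the bijection between surviving permutations and vertex-disjoint cycle collections in $G$, and to note that distinct such collections give distinct contributions that are then summed — this is routine but worth stating to justify that the formula is an identity of polynomials in the $a_{ij}$, not merely a numerical coincidence. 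Once the sign is settled, the statement about the $c_i$ being polynomial functions of the monomial cycles is immediate, since every surviving term is literally a product of monomial cycles.
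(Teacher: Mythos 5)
Your proposal is correct and follows exactly the route the paper itself indicates (the paper's entire justification is the one-line remark that the result ``follows from the expansion of the determinant, and breaking each permutation into its disjoint cycle decomposition''). Your sign bookkeeping checks out: a permutation with nontrivial cycles of lengths $\ell_1,\dots,\ell_k$ together with a choice of $i-\sum\ell_j$ diagonal terms contributes $(-1)^{k}(-1)^{i-\sum\ell_j} = (-1)^i\prod_j(-1)^{\ell_j-1}$, which is the claimed $(-1)^i\prod_j\mathrm{sign}(C_j)$ once the $1$-cycles (sign $+1$) are included in the collection.
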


This follows from the expansion of the determinant, and breaking each
permutation into its disjoint cycle decomposition.

\begin{ex}
Looking at the input-output equation for the graph from Example \ref{ex:mainex},
which appears in Example \ref{ex:mainio}, we see that the coefficient of
$y'$ is
$$-(E_{3}(a_{11},a_{22},a_{33},a_{44})
-a_{11}a_{23}a_{32}-a_{12}a_{21}a_{33} -a_{12}a_{21}a_{44}
-a_{23}a_{32}a_{44} + a_{23}a_{34}a_{42}).$$
The elementary symmetric function gives all terms that 
come from products of three $1$-cycles.  All the terms with a minus sign
come from products of a $2$-cycle and a $1$ cycle, and the final term comes
from the single $3$-cycle in the graph.
\end{ex}

\begin{defn}
A graph $G$ is \emph{strongly connected} if there is a directed path from
any vertex to any other vertex.  Equivalently, $G$ is strongly connected
if the graph is connected and every edge belongs to a cycle.
\end{defn}

\begin{prop}\label{prop:strong}
Let $G$ be a graph, $A = A(G)$ be the associated matrix of indeterminates, and $A_{1}$ be the submatrix of $A$ obtained by deleting the first row and column
of $A$.  
Let
$f$ and $f_{1}$ be the characteristic polynomials of $A$ and $A_{1}$
respectively.  Then $f$ and $f_{1}$ have a common factor if and only if
$G$ is not strongly connected.
\end{prop}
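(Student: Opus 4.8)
The plan is to prove the two implications separately, using a fact read off from the proof of Theorem~\ref{thm:ioeqn}: the $(1,1)$ cofactor of $\lambda I - A$ equals $\tilde f_1$, so $(\lambda I - A)^{-1}_{11} = \tilde f_1/\tilde f$. Since $\tilde f$ and $\tilde f_1$ are monic in $\lambda$, the statement ``$\tilde f$ and $\tilde f_1$ have a common factor'' is equivalent to $\gcd(\tilde f,\tilde f_1)\neq 1$ in $\rr(a_{ij})[\lambda]$, equivalently to the vanishing of $\mathrm{Res}_\lambda(\tilde f,\tilde f_1)$, a polynomial in the parameters $a_{ij}$. In particular, for the direction ``$G$ strongly connected $\Rightarrow$ coprime'' it suffices to exhibit one numerical matrix, with its nonzero entries among the diagonal and the edges of $G$, at which this resultant does not vanish.

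Assume first that $G$ is not strongly connected. Let $S$ be the set of vertices reachable from vertex $1$, and $T$ the set of vertices from which vertex $1$ is reachable. If $S$ and $T$ were both the entire vertex set, then any vertex $i$ could reach any vertex $j$ along $i \rightsquigarrow 1 \rightsquigarrow j$, so $G$ would be strongly connected; hence $S$ or $T$ is a proper subset. If $S$ is proper, then every edge whose tail lies in $S$ has its head in $S$, so ordering the vertices with $S$ first makes $A$ block upper-triangular, with diagonal blocks the principal submatrices $A[S]$ and $A[S^{c}]$ on $S$ and its complement. Because $1\in S$, deleting the first row and column leaves $A[S^{c}]$ intact, so $\det(\lambda I - A[S^{c}])$, a polynomial of degree $|S^{c}|\geq 1$ in $\lambda$, divides both $\tilde f$ and $\tilde f_1$. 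The case where $T$ is proper is symmetric, with $A$ block lower-triangular. This half is routine.

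Now assume $G$ is strongly connected. I will construct a numerical matrix $A_0$ supported on the diagonal and the edges of $G$ for which the system $\dot x = A_0 x + e_1 u$, $y = x_1$ is both controllable and observable; by Kalman's minimality theorem this makes the transfer function $\tilde f_1/\tilde f$ (with $\deg\tilde f = n > \deg\tilde f_1$) a fraction in lowest terms, i.e.\ $\gcd(\tilde f(A_0),\tilde f_1(A_0)) = 1$, which is the desired nonvanishing of the resultant. Strong connectivity supplies a spanning out-arborescence $\mathcal{T}^{+}$ rooted at compartment $1$ and a spanning in-arborescence $\mathcal{T}^{-}$ rooted at $1$. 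I take $A_0$ to have pairwise distinct generic diagonal entries, generic nonzero weights on the edges of $\mathcal{T}^{+}\cup\mathcal{T}^{-}$, and zeros elsewhere. For the matrix supported on $\mathcal{T}^{+}$ and the diagonal, the eigenvalues are the diagonal entries $d_1,\dots,d_n$; applying the eigenvector (Popov--Belevitch--Hautus) criterion, one solves the left-eigenvector equations for eigenvalue $d_k$ by sweeping $\mathcal{T}^{+}$ from its leaves toward the root — each non-root vertex having a unique parent — and finds the eigenvector supported on the $\mathcal{T}^{+}$-path from $1$ to $k$ with nonzero first coordinate, so this matrix is controllable from $e_1$. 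The transposed computation on $\mathcal{T}^{-}$ shows the corresponding matrix is observable from $e_1^{T}$. Since controllability and observability are each Zariski-open conditions on the irreducible affine space of matrices supported on the diagonal and the edges of $G$, and each holds somewhere on that space, a generic such $A_0$ satisfies both.

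I expect the eigenvector sweep on the arborescences to be the only real work; the block-triangular factorizations, the translation between common factors and resultants, and the Kalman criterion are all standard. The remaining delicate point is the genericity bookkeeping — that controllability, observability, and distinctness of the eigenvalues of $A_0$ are each generic, hence achievable simultaneously — which again reduces to irreducibility of the parameter space once each individual witness has been produced.
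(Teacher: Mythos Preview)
Your argument is correct, and it takes a genuinely different route from the paper's.

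For the direction ``not strongly connected $\Rightarrow$ common factor,'' you and the paper do essentially the same thing: exhibit a block-triangular structure on $A$ so that one diagonal block avoids vertex~$1$, and its characteristic polynomial divides both $\tilde f$ and $\tilde f_1$. Your version, via the reachable/co-reachable sets $S$ and $T$, is a bit more explicit than the paper's.

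For the direction ``strongly connected $\Rightarrow$ coprime,'' the approaches diverge. The paper argues purely algebraically: it claims that the characteristic polynomial $\tilde f \in \kk(A)[\lambda]$ is irreducible, by observing that its constant term $\pm\det A$ is irreducible in $\kk[A]$ when $G$ is strongly connected (a nontrivial factorization of $\det A$ would force a partition of the vertex set with no cycles crossing it). Irreducibility of $\tilde f$ then immediately precludes a common factor with the lower-degree $\tilde f_1$. Your argument instead specializes: using spanning out- and in-arborescences rooted at $1$, you build numerical witnesses for controllability and observability via the PBH test, then invoke Zariski-openness on the irreducible parameter space to get a single matrix $A_0$ for which $(A_0,e_1,e_1^T)$ is minimal, hence $\tilde f_1(A_0)/\tilde f(A_0)$ is in lowest terms and the resultant is nonzero.

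Each approach has its virtues. The paper's proof yields the stronger conclusion that $\tilde f$ is actually irreducible, and is short once one accepts the irreducibility of $\det A$ (though that step, as written in the paper, is somewhat sketched and relies on the homogeneity of $\tilde f$ in $\lambda$ and the $a_{ij}$ together). Your proof is longer but each step is mechanical --- the leaf-to-root sweep on an arborescence with distinct diagonal entries is a clean PBH computation --- and it connects naturally to the minimal-realization interpretation of the transfer function $\tilde f_1/\tilde f$, which is thematically apt for the paper's system-theoretic setting. The genericity bookkeeping you flag as the ``remaining delicate point'' is indeed routine: controllability, observability, and distinctness of diagonal entries are each cut out by the nonvanishing of a polynomial in the parameters, and you have exhibited a witness for each on the same irreducible affine space.
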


\begin{proof}
If $G$ is not strongly connected after rearranging rows and columns
of $A$, it will be a block upper triangular matrix.  The
characteristic polynomial of $A$ factors as the product of the
characteristic polynomials of the diagonal blocks.  The
characteristic polynomial of $A_{1}$ will contain as factors,
all of the factors for diagonal blocks of $A$ that do not involve
row/column $1$.

On the other hand, if $G$ is strongly connected, the characteristic
polynomial of $A$ is irreducible in the polynomial ring
$\kk(A)[\lambda]$, where $\kk(A)$ is the fraction field in the entries of
$A$.  This can be seen by looking at the constant term of the
characteristic polynomial, i.e.~$\det(A)$, which itself is
irreducible in the polynomial ring $\kk[A]$.  Indeed, if $\det(A)$
was reducible, we could partition in the vertices of $G$ into
two disjoint sets such that there were no cycles passing between
those sets of vertices.  This contradicts the fact that $G$ is strongly
connected.
\end{proof}

\begin{rmk}
If $G$ is a general graph with generic parameters, then
the input-output equation will result from taking the
largest strongly connected subgraph of $G$ that contains the
vertex $1$.  With this in mind, we will focus in the
remainder of the paper only on strongly connected graphs.  
\end{rmk}

Let $\calc = \calc(G)$ be the set of all directed cycles in 
the graph $G$.  To each cycle $C = (i_{0}, \ldots, i_{k}) \in \calc$ we associate the monomial cycle
$a^{C} := a_{i_{1}i_{2}}a_{i_{2}i_{3}}\cdots a_{i_{k}i_{1}}$.
Define the \emph{cycle map} by
$$
\pi:  \rr^{m+n}  \rightarrow  \rr^{\#\calc},  \quad  A  \mapsto ( a^{C})_{C \in \calc}.
$$
Since the coefficients of the 
characteristic polynomial of $A$ and $A_{1}$ are both
polynomials in terms of the cycles of $G$, the double
characteristic polynomial map $c$ factors through the cycle map.
That is, there is a polynomial map $\phi:  \rr^{\#\calc} \rightarrow
\rr^{2n-1}$ such that $c =  \phi \circ \pi$.  As a
consequence, we have the following proposition.

\begin{prop}\label{prop:cyclemapbound}
Let $G$ be a graph with $n$ vertices and $m$ edges.  The
dimension of the image of the double
characteristic polynomial map $c$ is less than or equal to the dimension
of the image of the cycle map $\pi$.
In particular, $\dim {\rm im} \, c $ is bounded above by
the number of algebraically independent monomial cycles in $G$.
\end{prop}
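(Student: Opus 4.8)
The plan is to exploit the factorization $c = \phi \circ \pi$ that was just established, and then to combine two standard facts about dimensions of images of maps: composing with a map can only decrease the dimension of the image, and the dimension of the image of a monomial map equals the rank of its exponent matrix (equivalently, the maximal number of algebraically independent monomials among the coordinate functions).

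First I would observe that for any composition of maps $\rr^{m+n} \xrightarrow{\pi} \rr^{\#\calc} \xrightarrow{\phi} \rr^{2n-1}$, one has $\dim {\rm im}(\phi \circ \pi) \le \dim {\rm im}\, \pi$, since ${\rm im}(\phi\circ\pi) = \phi({\rm im}\,\pi)$ and a (polynomial, hence continuous) map cannot increase dimension. Applying this with $c = \phi \circ \pi$ gives $\dim {\rm im}\, c \le \dim {\rm im}\, \pi$, which is the first assertion.

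For the ``in particular'' clause I would identify $\dim {\rm im}\,\pi$ with the number of algebraically independent monomial cycles in $G$. The cycle map $\pi$ sends $A$ to the tuple $(a^C)_{C\in\calc}$, each coordinate of which is a monomial in the entries $a_{ij}$. The image of such a monomial map has dimension equal to the transcendence degree over $\kk$ of the subfield of $\kk(A)$ generated by the monomials $\{a^C : C \in \calc\}$, i.e. the maximal size of an algebraically independent subset of these monomials. (Concretely, passing to logarithms linearizes the monomial map, and the dimension of the image equals the rank of the integer matrix of cycle-edge incidences; this is the content of the standard statement that the dimension of the closure of the image of a monomial map is the rank of its exponent matrix.) Substituting this into the inequality just obtained yields the claim.

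The one point that needs a little care — and the place where I expect the main (though mild) obstacle to lie — is the passage from ``monomial map'' to ``rank of exponent matrix / number of algebraically independent monomials,'' since the parameter domain $\Theta$ is a semialgebraic subset of $\rr^{m+n}$ (positive off-diagonal, negative diagonal, diagonally dominant) rather than all of $\rr^{m+n}$ or $(\rr^\times)^{m+n}$. However, $\Theta$ is full-dimensional and contains a Euclidean-open set on which every $a_{ij}$ is nonzero, so the generic rank of the Jacobian of $\pi$ restricted to $\Theta$ equals that of $\pi$ on the open torus, and hence equals the rank of the exponent matrix; thus restricting to $\Theta$ changes nothing. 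With that remark in place the proof is complete.
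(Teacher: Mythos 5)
Your proof is correct and follows essentially the same route as the paper, which presents the proposition as an immediate consequence of the factorization $c = \phi \circ \pi$ together with the standard facts that composition cannot increase the dimension of the image and that the dimension of the image of a monomial map equals the number of algebraically independent coordinate monomials. Your extra remark about the full-dimensional semialgebraic domain $\Theta$ is a sensible (if routine) piece of care that the paper leaves implicit.
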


Since the cycle map is a monomial map, it is easy to use
linear algebra to calculate the dimension of its image.
This is part of the connection between lattice polytopes and
toric varieties \cite{Cox2011}, though we will not require advanced
material from that theory.  The main result for our story is the
following.

\begin{thm}\label{thm:dimcycle}
Let $G$ be a strongly connected graph with $n$ vertices and $m$
edges.  Then the dimension of the image of the cycle map $\pi$ is
$m + 1$.
\end{thm}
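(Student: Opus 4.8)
For a strongly connected graph $G$ with $n$ vertices and $m$ edges, the dimension of the image of the cycle map $\pi$ is $m+1$.

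The plan is to reduce the statement to a rank computation for an integer $0/1$ matrix of cycle exponent vectors, and then to carry out that computation using the cycle space of the underlying graph of $G$ together with a circulation–decomposition argument. Since $\pi$ is a monomial map, by the standard linear-algebra computation of the dimension of the image of such a map (cf.\ the remarks preceding the theorem) we have $\dim {\rm im}\,\pi = {\rm rank}\,\{v_C : C \in \calc\}$, where $v_C \in \zz^{m+n}$ is the exponent vector of the monomial cycle $a^C$, the $m+n$ coordinates being indexed by the off-diagonal parameters $a_{ij}$ (the edges of $G$) and the diagonal parameters $a_{ii}$. The $n$ one-cycles contribute precisely the standard basis vectors in the diagonal coordinates, while every cycle of length at least $2$ has $v_C$ supported on the edge coordinates, where it is the $0/1$ indicator vector of the edge set of $C$. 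Consequently
$$\dim {\rm im}\,\pi \;=\; n + {\rm rank}\,\{ v_C : C \text{ a directed cycle of } G \text{ of length} \ge 2\},$$
the rank on the right being computed inside the edge space $\rr^{m}$. It therefore suffices to prove that $S := {\rm span}\{v_C : C \text{ a directed cycle of length} \ge 2\}$ has dimension $m-n+1$.

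Let $\partial\colon \rr^m \to \rr^n$ be the boundary (incidence) map of $G$, sending the basis vector of an edge $i\to j$ to $e_j - e_i$, and let $Z = \ker \partial$ be the cycle space of $G$. For any directed cycle $C$ we have $\partial v_C = 0$, since each vertex of $C$ is entered and left exactly once; hence $S \subseteq Z$. Because $G$ is strongly connected it is connected, so the left null space of $\partial$ is spanned by the all-ones vector and ${\rm rank}\,\partial = n-1$ (see Proposition~\ref{prop:dimeg}), giving $\dim Z = m - n + 1$. This already yields $\dim {\rm im}\,\pi \le m+1$, recovering the bound of Proposition~\ref{prop:cyclemapbound}, and it remains to prove the reverse inclusion $Z \subseteq S$.

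This reverse inclusion is the one place where strong connectedness is essential, and is the step I would treat with the most care: a priori the forward-traversed directed cycles span only a cone inside $Z$, not obviously all of $Z$. The device is to first produce a strictly positive reference circulation lying in $S$. Since $G$ is strongly connected, every edge $e$ lies on some directed cycle $C_e$ (necessarily of length $\ge 2$); then $w := \sum_{e \in E(G)} v_{C_e}$ belongs to $S$, satisfies $\partial w = 0$, and has $w_e \ge 1$ for every edge $e$. Now take an arbitrary $z \in Z$. For $N$ sufficiently large the vector $z + Nw$ has all coordinates nonnegative and still lies in $\ker \partial$, so by the flow-decomposition lemma it is a nonnegative combination of indicator vectors of directed cycles of length $\ge 2$; in particular $z + Nw \in S$, and therefore $z = (z + Nw) - Nw \in S$. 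Hence $S = Z$, so $\dim S = m - n + 1$ and $\dim {\rm im}\,\pi = n + (m-n+1) = m+1$.

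The remaining ingredients are routine: that the dimension of the image of a monomial map is the rank of its exponent matrix, that the incidence matrix of a connected graph has rank $n-1$, and the flow-decomposition lemma (proved by induction on the size of the support of the circulation—peel off one directed cycle, scaled by its positive minimum weight, at a time). An alternative to the circulation argument would be to induct along an ear decomposition of the strongly connected graph $G$, adjoining one directed ear at a time and checking that each ear raises the cycle-space dimension by one while contributing exactly one new dimension to $S$; but the circulation argument is shorter, so I would present that.
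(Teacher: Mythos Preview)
Your proof is correct and takes essentially the same approach as the paper's: both reduce to showing that the indicator vectors of directed cycles span $\ker E(G)$, use strong connectedness to add cycle multiples to an arbitrary kernel vector until it becomes nonnegative, and then decompose the resulting nonnegative circulation into directed cycles. Your packaging via a single strictly positive reference circulation $w$ is a slightly cleaner variant of the paper's edge-by-edge correction of negative entries, but the underlying argument is the same.
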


We will phrase the proof of Theorem \ref{thm:dimcycle} in terms
of the directed incidence matrix, a tool we will also need later
in the paper.  Let $G$  have $n$ vertices, $V=\left\{1,2,\ldots,n\right\}$, and $m$ directed edges.  We can form the $n$ by $m$ directed incidence matrix $E(G)$, where

\begin{equation}\label{eq:eg}
  E(G)_{i,(j,k)} = \left\{ 
  \begin{array}{l l l}
    1 & \quad \text{if $i=j$}\\
   -1 & \quad \text{if $i=k$}\\
    0 & \quad \text{otherwise.}\\
  \end{array} \right.
\end{equation}

In other words, $E(G)$ has column vectors $e_{jk}$ corresponding to the edges 
$j\rightarrow{k}$ with $1$ in the $jth$ row, $-1$ in the $kth$ row, and $0$ 
otherwise.  Note that a $0/1$ vector in the kernel of $E(G)$ is
the indicator vector of the disjoint union of a collection of cycles in
$G$.

The rank of the directed incidence matrix is well-known 
(e.g. \cite[Prop.~4.3]{Biggs}).

\begin{prop}\label{prop:dimeg}
Let $G$ be a graph with $n$ vertices, $m$ edges, and $l$ connected components.
Then the rank of $E(G)$ is $n - l$.  Thus, the dimension
of $\ker E(G)$ is $m - n + l$.
\end{prop}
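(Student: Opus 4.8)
The plan is to compute $\operatorname{rank} E(G)$ by identifying the \emph{left} null space of $E(G)$, and then to read off the statement about $\ker E(G)$ from the rank--nullity theorem applied to $E(G)\colon \rr^{m}\to\rr^{n}$.

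First I would analyze when a row vector $v=(v_{1},\ldots,v_{n})$ satisfies $v^{T}E(G)=0$. By the definition (\ref{eq:eg}), the column of $E(G)$ indexed by the edge $j\to k$ is $e_{jk}$, with $+1$ in row $j$, $-1$ in row $k$, and $0$ elsewhere; hence the coordinate of $v^{T}E(G)$ in that column is $v_{j}-v_{k}$. So $v$ lies in the left null space of $E(G)$ if and only if $v_{j}=v_{k}$ for every edge $j\to k$ of $G$. Iterating this equality along paths, $v$ must be constant on each connected component of $G$; conversely, any $v$ that is constant on each component annihilates every column $e_{jk}$. Therefore the left null space of $E(G)$ is exactly the space of vectors constant on each connected component, which has dimension $l$ (one free scalar per component).

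Since the rank of an $n\times m$ matrix equals $n$ minus the dimension of its left null space, we conclude $\operatorname{rank} E(G)=n-l$. Finally, rank--nullity gives $\dim\ker E(G)=m-\operatorname{rank} E(G)=m-n+l$, as claimed.

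I do not expect a real obstacle here: this is the standard computation of the rank of a signed incidence matrix. The only point meriting a line of care is the implication ``$v_{j}=v_{k}$ on every edge $\Rightarrow$ $v$ constant on each component,'' which uses that any two vertices in a component are joined by a path; note also that the argument is characteristic-free, since the defining relations $v_{j}=v_{k}$ make sense over any field.
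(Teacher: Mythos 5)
Your proof is correct, and it is exactly the standard argument: the left null space of the signed incidence matrix consists of the vectors constant on each (weakly) connected component, giving rank $n-l$, and rank--nullity then yields $\dim\ker E(G)=m-n+l$. The paper does not prove this proposition at all --- it simply cites it as well known (Biggs, Prop.~4.3) --- so your write-up supplies precisely the omitted textbook proof, and your observation that the relations $v_j=v_k$ ignore edge orientation correctly justifies using the components of the underlying undirected graph.
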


\begin{proof}[Proof of Theorem \ref{thm:dimcycle}]
If $\phi: \rr^{k_{1}} \rightarrow \rr^{k_{2}}$ is a
monomial map, the dimension of the image of $\phi$ is equal to 
the rank of the matrix whose columns are the monomials
appearing in $\phi$.
In the case of the cycle map, we should thus make a $0/1$ matrix $B$
whose columns are the cycles in $G$, and compute the rank of
that matrix.  All the one cycles, $(i,i)$, corresponding to the
monomial $a_{ii}$ contribute one dimension to the rank of $B$,
and those one cycles do not appear in any other cycles.
Hence we can reduce to a matrix $B'$ which eliminates those $n$
columns.

Thus, we are left with the $0/1$ matrix whose columns are the indicator
vectors of all cycles in $G$.  The columns of $B'$ are all in the
kernel of $E(G)$ (see e.g. \cite[Theorem ~4.5]{Biggs}).  Since $G$ is strongly connected,
$\dim \ker E(G) = m-n+1$.  Hence, it suffices to show that
the columns of $B'$ generate the kernel of $E(G)$ when $G$ is strongly
connected.

Let $v$ be an integer vector in the kernel of $E(G)$.  Since $G$ is strongly
connected, for each negative entry of $v$, there is a cycle $C$ passing
through the corresponding edge of $G$.  Let $1_{C}$ be the corresponding
integer vector.  Then for some large integer $k$, $v + k \cdot 1_{C}$ has
decreased the number of negative entries of $v$.  Continuing
in this fashion, we can assume that $v$ has no negative entries.

A nonnegative integer vector $v$ such that $E(G)v = 0$ corresponds to a 
multigraph (with edge $i \to j$ repeated $v_{ij}$ times) which has the property
that the indegree of each vertex equals the outdegree.  In such an
Eulerian graph, we can start with any edge and walk around until closing
off a cycle.  Removing that cycle results in a smaller graph with the same
property.  This process expresses $v$ as a nonnegative integer combination
of the indicator vectors of cycles.  This completes the proof.
\end{proof}

Corollary \ref{cor:maxedges} states that if $G$ is to have an
identifiable scaling reparametrization, $G$ must have at most
 $2n -2$ edges.  Theorem \ref{thm:dimcycle} and
Proposition  \ref{prop:cyclemapbound} say that this bound
on the number of edges is at least compatible with the
existence of an identifiable scaling reparametrization.
We will address this issue in the next section.


\section{Monomial Scaling Reparametrizations}\label{sec:mono}

The goal of this section is to prove Theorem \ref{thm:main},
which we restate here for simplicity.

\begin{thm*} 
Consider the linear compartment model with associated strongly connected graph
$G$, where the input and output are in the same compartment.  
The following conditions are equivalent for this model:
\begin{enumerate}
\item  The model has an identifiable scaling reparametrization.
\item  The model has an identifiable scaling reparametrization by
monomial functions of the original parameters.
\item  The dimension of the image of the double characteristic polynomial
map associated to $G$ is equal to the number of linearly independent
cycles in $G$.
\end{enumerate}
\end{thm*}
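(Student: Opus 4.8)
The plan is to prove the cycle of implications $(2)\Rightarrow(1)\Rightarrow(3)\Rightarrow(2)$. The implication $(2)\Rightarrow(1)$ is immediate, since a monomial scaling reparametrization is in particular a scaling reparametrization. For $(1)\Rightarrow(3)$, recall that the number of linearly independent cycles of $G$ equals $\dim{\rm im}\,\pi=m+1$ by Theorem~\ref{thm:dimcycle}, while $\dim{\rm im}\,c\le m+1$ by Proposition~\ref{prop:cyclemapbound}. Suppose the model has an identifiable scaling reparametrization by functions $f_1=1,f_2,\dots,f_n$, with rescaling map $r(A)=DAD^{-1}$, $D=\mathrm{Diag}(f_i(A))$. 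Since $D$ is diagonal, $r(A)$ is similar to $A$ and $(r(A))_1$ is similar to $A_1$, so $c\circ r=c$ (hence $r$ really is a reparametrization). Identifiability of the reparametrization means every entry of $r(A)$ is an identifiable function of $c$, so $r(A)$ is determined up to finitely many values by $c(A)$; thus $\dim{\rm im}\,r\le\dim{\rm im}\,c$. Combined with $\dim{\rm im}\,r\ge m+1$ (Proposition~\ref{prop:dimdrop}) this forces $\dim{\rm im}\,c=m+1$, which is $(3)$.

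\textbf{The main implication $(3)\Rightarrow(2)$.} This is where essentially all the work lies. Condition $(3)$ says $\dim{\rm im}\,c=\dim{\rm im}\,\pi$, so the polynomial map $\phi$ in the factorization $c=\phi\circ\pi$ is generically finite-to-one on ${\rm im}\,\pi$; therefore the field $\rr(a^C:C\in\calc)$ is a \emph{finite} extension of $\rr(c_1,\dots,c_n,d_1,\dots,d_{n-1})$. Consequently any Laurent monomial in the monomial cycles $a^C$ — equivalently, any monomial $a^v$ whose exponent vector $v$ is an integer combination of cycle indicator vectors — lies in $\rr(a^C:C)$ and hence is identifiable from $c$. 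In other words, under $(3)$ the statement ``identifiable from the cycle map $\pi$'' is promoted to ``identifiable from the double characteristic polynomial map $c$,'' and this promotion is the heart of the argument. So it suffices to construct a monomial scaling $X_i=f_i(A)x_i$ with $f_1=1$ such that each new coefficient $a_{ij}f_i/f_j$ is such a Laurent monomial (and so is each unchanged diagonal entry $a_{ii}$), and such that the image of the associated rescaling map $r$ still has dimension $m+1=\dim{\rm im}\,c$, so that the reparametrization is genuinely identifiable.

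\textbf{The construction.} I would fix a spanning in-tree $T$ of $G$ rooted at vertex $1$ (possible since $G$ is strongly connected), let $Q_i$ be the directed path in $T$ from $i$ to $1$, and let $f_i$ be the product of the off-diagonal parameters along the edges of $Q_i$ (so $f_1=1$ and each $f_i$ is an honest monomial). Writing $w_i$ for the $0/1$ edge-indicator vector of $Q_i$, a short calculation with the directed incidence matrix gives $E(G)w_i=e_i-e_1$ for every vertex $i$, whence the exponent vector of $a_{ij}f_i/f_j$ lies in $\ker E(G)$ for every edge $j\to i$; being an integer vector there, it is an integer combination of indicator vectors of cycles of length $\ge 2$ by the proof of Theorem~\ref{thm:dimcycle}, so $a_{ij}f_i/f_j$ is a Laurent monomial in the $a^C$ and is identifiable by the previous paragraph, as is each diagonal entry $a_{ii}$ (a monomial $1$-cycle). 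Finally, summing the exponent vectors of the coefficients $a_{ij}f_i/f_j$ around any cycle $C$ telescopes to $1_C$, so these $m$ vectors together with the $n$ one-cycle vectors span the entire span of all cycle indicator vectors, a space of dimension $m+1$; hence ${\rm im}\,r$ has dimension $m+1=\dim{\rm im}\,c$ and the monomial reparametrization is identifiable. (Alternatively, once every new coefficient is identifiable from $c$, Proposition~\ref{prop:rewrite} already yields that the reparametrization is identifiable.)

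\textbf{Main obstacle.} The difficulty I anticipate is logical rather than computational: one must cleanly isolate the purely toric/combinatorial fact that every new coefficient $a_{ij}f_i/f_j$ is identifiable from the cycle map $\pi$ — which holds for \emph{every} strongly connected $G$ — from the single place where hypothesis $(3)$ is used, namely the upgrade of identifiability from $\pi$ to identifiability from $c$ via the finiteness of $\rr(a^C:C)/\rr(c_1,\dots,d_{n-1})$. Once this is in place, the incidence-matrix bookkeeping (that $w_i=1_{Q_i}$ does the job, that the exponent vectors land in $\ker E(G)$, and that together with the diagonals they span a space of dimension $m+1$) is routine, relying only on the telescoping identities and on the description of $\ker E(G)$ already established in Section~\ref{sec:cycles}.
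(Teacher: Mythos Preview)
Your proof is correct, and for $(2)\Rightarrow(1)$ and $(1)\Rightarrow(3)$ it matches the paper's argument essentially verbatim.

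For $(3)\Rightarrow(2)$ you take a genuinely different, and somewhat more elementary, route than the paper. The paper formulates the problem as a matrix equation $I+CE=MD$ (Lemma~\ref{lem:matrixeq}), where $C$ encodes the exponents of the $f_i$ and $D$ encodes how the rescaled entries are written in the chosen cycle basis, and then solves this equation over $\zz$ by appealing to total unimodularity of $E$ and the unimodularity of the cycle--tree submatrix $M_2$ (Lemmas~\ref{lem:decompm} and~\ref{lem:zeqb}); the solution $C_1=E_1^{-1}$, $D_2=M_2^{-1}$ works for \emph{any} spanning tree of the underlying graph. You instead pick a specific spanning \emph{in}-tree rooted at $1$, set $f_i$ equal to the product of parameters along the unique tree path $Q_i$ from $i$ to $1$, and verify directly via the telescoping identity $E(G)\,1_{Q_i}=e_i-e_1$ that every rescaled off-diagonal entry has exponent vector in $\ker E(G)$, hence is a Laurent monomial in the cycles; the promotion from ``identifiable from $\pi$'' to ``identifiable from $c$'' is exactly the step where you invoke hypothesis~$(3)$, and your dimension count at the end (via the cycle telescoping $\prod_{(j\to i)\in C} a_{ij}f_i/f_j = a^C$) is clean.

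What each approach buys: your construction is more explicit (the $f_i$ are honest monomials with nonnegative exponents, given by a concrete path product) and avoids the total unimodularity lemma and the block matrix inversion entirely. The paper's linear-algebraic formulation, on the other hand, allows any spanning tree, not just in-trees, and packages the construction in a form that feeds directly into Algorithm~\ref{alg:mainalg}. The two constructions coincide when the chosen spanning tree happens to be an in-tree rooted at~$1$: in that case the columns of $E_1^{-1}$ are precisely your path indicators $1_{Q_i}$.
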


\begin{proof}
Clearly $(2) \implies (1)$.  Also, it is not difficult to see that
$(1) \implies (3)$.  Indeed, if $G$ has $n$ vertices and $m$ edges,
the dimension of the image of the double characteristic polynomial map
is $\leq m+1$, $m+1$ being the number of linearly independent cycles in 
$G$ by Proposition \ref{prop:cyclemapbound} and Theorem \ref{thm:dimcycle}.
On the other hand, Proposition \ref{prop:dimdrop} shows that the
dimension of the image of any rescaling map is $\geq n + m - (n - 1) = m+1$.
Since an identifiable reparametrization implies that the
dimension of the image of the rescaling $r$ equals the dimension
of the image of $c$, we are done.
\end{proof}

What remains to show is that $(3) \implies (2)$, and this is
the issue that we spend the rest of this section proving.
Let $E$ be the matrix obtained from $E(G)$ by deleting the first row.
Let $M$ be an $m \times (m-n+1)$ matrix who columns consist of $m-n+1$
linearly independent cycles in the graph $G$.

\begin{lemma}\label{lem:matrixeq}
Let $G$ be a strongly connected graph and suppose that 
the dimension of the image of the
double characteristic polynomial map associated to $G$ is equal to the
number of linear independent cycles in $G$.  Then the model
has an identifiable reparametrization by monomial functions 
if there exist integer matrices $C$ and $D$ such that
$$
I + CE = MD
$$
where $I$ is an $m \times m$ identity matrix.
\end{lemma}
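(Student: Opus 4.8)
The plan is to interpret the matrix identity $I + CE = MD$ as a precise encoding of the statement "each edge-rescaling $a_{ij} \mapsto a_{ij} f_i/f_j$ is a monomial in the cycle monomials of $G$", and then to invoke condition (3) to guarantee that such $C$ and $D$ exist. The key observation is that working multiplicatively with the rescaled parameters $a_{ij} f_i(A)/f_j(A)$ is awkward, so I would pass to logarithms, as in the proof of Proposition \ref{prop:dimdrop}. If we write $\ell_{ij} = \log a_{ij}$ and $L_i = \log f_i(A)$, then the rescaled edge parameter has logarithm $\ell_{ij} + L_i - L_j$. Choosing the monomial rescaling $f_i$ amounts to choosing an integer vector recording the exponents of the cycle monomials; this is exactly what the columns of a matrix $C$ (the rescaling data, giving $L$ in terms of the $\ell$'s via $-E^{T}$-type relations — note $E$ has the first row deleted since $f_1 = 1$) encode. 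The equation $I + CE = MD$ then says: the vector of logarithms of all rescaled edge parameters lies in the column span (over $\zz$) of the cycle matrix $M$, i.e. every rescaled parameter is a monomial in the monomial cycles $a^C$. Since the coefficients of the double characteristic polynomial are polynomials in the $a^C$ by Theorem \ref{thm:funccycles}, and condition (3) says $\dim \operatorname{im} c = m+1 = $ number of independent cycles, the monomial cycles appearing are themselves identifiable functions of $c$; hence any monomial in them is identifiable, so the reparametrized model is identifiable by Proposition \ref{prop:rewrite}.

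The steps, in order, would be: (i) Set up the logarithmic coordinates and write the rescaling map $r$ in the form $\log r(A) = (I + CE)\,\ell$ for an appropriate integer matrix $C$ determined by the choice of monomial functions $f_i$ (with $f_1 = 1$ forcing the deletion of the first row of $E(G)$, which is why we use $E$ rather than $E(G)$). (ii) Observe that the column space of the cycle matrix $M$ — equivalently the image of $\pi$ read logarithmically, after removing the trivial $1$-cycle coordinates — is precisely the set of exponent vectors of monomials in the $a^C$. (iii) Show that if $I + CE = MD$ for integer matrices $C, D$, then each coordinate $r_{ij}(A) = a_{ij} f_i/f_j$ equals $\prod_C (a^C)^{D_{\cdot, (ij)}}$, a monomial in the cycle monomials. (iv) Invoke condition (3) together with Theorem \ref{thm:funccycles} and Theorem \ref{thm:dimcycle}: since there are $m+1$ algebraically independent cycle monomials and $\dim \operatorname{im} c = m+1$, the field generated by the $a^C$ over $\qq$ equals (up to finite extension) the field generated by the coefficients of $c$, so every $a^C$, and hence every monomial in them, is identifiable from $c$. (v) Conclude via Proposition \ref{prop:rewrite} that $c \circ q$ (with $q$ the parametrization coming from $r$) is identifiable, establishing (2).

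The main obstacle is step (iii) together with the bookkeeping in step (i): one must be careful that the integer matrix $C$ genuinely corresponds to a legitimate choice of monomial functions $f_i$ with $f_1 = 1$, and that the identity $I + CE = MD$ is being read in the correct coordinate system (edges indexed consistently as rows of $M$ and of $E$). The subtlety is that $C$ encodes the map "edge $\mapsto$ contribution to $\log f_i$" composed with the incidence relation, and verifying that the diagonal $I$ term correctly accounts for the original $\log a_{ij}$ term while $CE$ accounts for the $L_i - L_j$ correction requires matching the sign conventions of $E(G)$ from \eqref{eq:eg}. Once the dictionary between the matrix identity and the monomiality of the rescaled parameters is set up correctly, the identifiability conclusion is essentially immediate from the earlier results; indeed, the real content of $(3) \Rightarrow (2)$ will be the *existence* of such $C$ and $D$, which I expect is handled in a subsequent lemma using the hypothesis that $\operatorname{rank} M$ achieves the cycle bound — this present lemma only needs the (comparatively routine) implication "matrix identity $\Rightarrow$ monomial identifiable reparametrization."
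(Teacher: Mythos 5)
Your proposal is correct and follows essentially the same route as the paper: the logarithmic/exponent-vector bookkeeping showing that $I+CE=MD$ (with the first row of $E(G)$ deleted because $f_1=1$, and up to replacing $C$ by $-C$ to match the sign convention of \eqref{eq:eg}) encodes exactly that each rescaled parameter $a_{ij}f_i(A)/f_j(A)$ is a monomial in the cycle monomials, combined with the observation that under hypothesis (3) the $m+1$ algebraically independent cycle monomials are identifiable from $c$, so Proposition \ref{prop:rewrite} applies. Your step (iv) field-theoretic justification makes explicit what the paper asserts tersely, and your closing remark correctly locates the real content of $(3)\Rightarrow(2)$ in the subsequent existence lemma for $C$ and $D$.
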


\begin{proof}
Assume we have an ODE system as defined in the previous sections.  We perform a monomial scaling $X_i=f_{i}(A)x_i$ for $i=1,\ldots,n$, where $f_i(A)$ is a monomial in the $m$ off-diagonal entries, a subset of $\left\{a_{12},a_{13},\ldots,a_{n,n-1}\right\},$ with exponent vector $c_i=(c_{1i},c_{2i},\ldots,c_{m,i})$. Since we do not want to reparametrize $x_1$, we let $f_1(A)=1$.  Then the entries $a_{ij}$ of matrix $A$ become $a_{ij}f_i(A)/f_j(A)$. Thus, the diagonal terms, $a_{11},a_{22},\ldots,a_{nn}$, are unchanged in our reparametrization, and we only focus on off diagonal terms.

Form the matrix of exponents of the new $m$ off-diagonal coefficients of $A$ resulting from this monomial rescaling.  This matrix can be written as $I+C\cdot{E(G)}$
where $I$ is an $m$ by $m$ identity matrix, $C$ is the $m$ by $n$ matrix whose column vectors are $c_i$, and $E(G)$ is the $n$ by $m$ incidence matrix of the graph of $A$.  Since $f_{1}(A) = 1$, the first column of $C$ is all zeros.
Hence we can delete that first column and simulataneously the first row of 
$E(G)$ to see that a scaling of the type we are interested in yields the
matrix of exponent vectors of the form
$$I + CE.$$

Now assume that the the dimension of the double characteristic polynomial
map is equal to the number of linear independent cycles.
Thus, there are $m+1$ algebraically independent identifiable monomial cycles. 
 Of the $m+1$ monomial cycles we wish to reparametrize over, exactly $n$ of them are the diagonal terms $a_{11}, a_{22},\ldots,a_{nn}$, 
while the other $m-n+1$ monomial cycles are in terms of the $m$ off-diagonal elements.  

By Proposition \ref{prop:rewrite}, finding an identifiable scaling
reparametrization amounts to finding a rescaling such that 
the rescaled monomials $a_{ij}f_{i}(A)/f_{j}(A)$
are functions of the monomial cycles, which we denote by
$q_1,q_2,\ldots ,q_{m-n+1}$.
Any monomial function of  $q_1,q_2,\ldots ,q_{m-n+1}$ has the form $q_1^{d_{1i}}q_2^{d_{2i}}\cdots q_{m-n+1}^{d_{m-n+1,i}}$ for $i=1,\ldots,m$.  Let the exponent vectors of each of the monomial cycles form the columns of the matrix $M$.  Thus the matrix of exponent vectors of all of these functions of monomial cycles in terms of the original $a_{ij}$s will be 
$M\cdot{D}$
where $M$ is the $m$ by $m-n+1$ matrix who columns are the exponent vectors of each of the monomial cycles $q_1,\ldots,q_{m-n+1}$ and $D$ is the $m-n+1$ by $m$ matrix whose columns are $d_i=(d_{1i},d_{2i},\ldots,d_{m-n+1,i})$.
To say that the scaling reparametrization yields an identifiable reparametrization
is the same as saying we can find $C$ and $D$ such that these two matrices
of exponent vectors are the same, i.e.~$I + CE = MD$.  Since we
wish for a rational reparametrization, we require both $C$ and $D$ to be integer
matrices.
\end{proof}

We will prove that there always exist integer matrices $C$ and $D$ 
such that $I + C E  = MD$ in Lemma \ref{lem:zeqb}.  To do this, we
need to record some basic facts about the matrices $E$ and $M$.

\begin{lemma}\label{lem:decompm}
Let $G$ be a strongly connected graph.
Let $E$ be obtained from $E(G)$ by deleting the first row.
Let $M$ be a matrix whose columns are a set of $m-n+1$ linearly independent
cycles in $G$.  Then
\begin{enumerate}
\item  $E$ is a totally unimodular matrix, i.e.~the determinant of any 
submatrix of $E$ is $0$ or $\pm 1$.
\item  An $(n-1) \times (n-1)$ submatrix of $E$ has rank $n-1$ if and
only if the corresponding set of $n-1$ edges of $G$ is a spanning tree
of $G$.
\item  An $(m-n+1) \times (m-n+1)$ submatrix of $M$ which corresponds to the
complement of the set of edges in a spanning tree of $G$ has determinant 
$\pm 1$. 
\end{enumerate} 
\end{lemma}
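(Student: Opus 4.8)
The plan is to establish the three parts in order; parts (1) and (2) are standard facts about (deleted) incidence matrices that I would include for completeness, while part (3) carries the real content. For (1), I would induct on the size $k$ of a square submatrix $B$ of $E$. Since $G$ has no loops, every column of $E$, hence of $B$, contains at most one $+1$ and at most one $-1$. If some column of $B$ is zero then $\det B=0$; if some column of $B$ has a single nonzero entry, expand along it and apply induction to the $(k-1)\times(k-1)$ minor; otherwise every column of $B$ has exactly one $+1$ and one $-1$, so the rows of $B$ sum to $0$ and $\det B=0$. The case $k=1$ is clear.

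For (2), let $F$ be a set of $n-1$ columns of $E$, let $H=(V,F)$ be the corresponding subgraph, and let $c$ be its number of connected components. By Proposition \ref{prop:dimeg} the full incidence matrix $E(H)$ has rank $n-c$, with left kernel spanned by the indicator vectors of the components of $H$. Deleting the first row, the left kernel of the $(n-1)\times(n-1)$ submatrix $E_F$ is spanned by the indicator vectors of those components \emph{not} containing vertex $1$, so it has dimension $c-1$; hence $\mathrm{rank}(E_F)=(n-1)-(c-1)=n-c$. Thus $E_F$ has full rank $n-1$ iff $c=1$, i.e.\ iff $H$ is connected, which for a graph on $n$ vertices with $n-1$ edges means exactly that $H$ is a spanning tree. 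Combined with (1), ``$\mathrm{rank}\,E_F=n-1$'' is equivalent to ``$\det E_F=\pm1$''.

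For (3), I would first reduce to a lattice-theoretic statement. Since $G$ is strongly connected, $\dim\ker E(G)=m-n+1$; moreover $\mathrm{rank}\,E=n-1$ (the left kernel of $E(G)$ is spanned by the all-ones vector, which has nonzero first entry, so dropping the first row loses no rank), hence $\ker E=\ker E(G)$ and the columns of $M$ form a $\qq$-basis of $\ker E$. Fix a spanning tree $T$ with cotree $S$, so by (1) and (2) the block $E_T$ of tree columns of $E$ is integrally invertible. Ordering edges with tree edges first, write $E=[\,E_T\mid E_S\,]$ and $M=\binom{M_T}{M_S}$; then $EM=0$ forces $M_T=-E_T^{-1}E_SM_S$. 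One checks directly that restriction to the cotree coordinates is a lattice isomorphism from $L:=\ker E(G)\cap\zz^{m}$ onto $\zz^{S}$, with inverse $w\mapsto(-E_T^{-1}E_Sw,\,w)$, and that it carries the columns of $M$ to those of $M_S$. Consequently $\det M_S=\pm1$ if and only if the columns of $M$ are a $\zz$-basis of $L$, not merely a $\qq$-basis.

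So (3) amounts to proving that \emph{any} $m-n+1$ linearly independent directed cycles of $G$ $\zz$-generate the cycle lattice $L$. To expose the combinatorial content, note that the fundamental cycles $\{\gamma_e\}_{e\in S}$ relative to $T$ form a $\zz$-basis of $L$ whose cotree submatrix is the identity, and — orienting each $\gamma_e$ so that $e$ is traversed forward — every directed cycle $C$ satisfies $C=\sum_{e\in C\cap S}\gamma_e$, so its coordinate vector in the $\gamma$-basis is precisely $C|_S\in\{0,1\}^{S}$; thus $M=\binom{F}{I}M_S$, where $\binom{F}{I}$ with $F=-E_T^{-1}E_S$ is the fundamental-cycle (network) matrix, and $\det M_S=\det[\,C_1|_S,\dots,C_{m-n+1}|_S\,]$. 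It remains to show this nonsingular $\{0,1\}$-matrix has determinant $\pm1$, equivalently that the incidence matrix of cotree edges against all directed cycles of $G$ is totally unimodular. I expect this to be the main obstacle: it is a genuine combinatorial fact, not a formal consequence of total unimodularity of $E$. I would attack it either by realizing $M_S$ as a network matrix relative to $T$, or by induction on $|S|$, deleting a cotree edge and using total unimodularity of $\binom{F}{I}$ together with the complementary-minor (Plücker) identity $\det M_R=\pm c\,\det E_{R^{c}}$ — which, since $E$ is totally unimodular, already forces every top minor of $M$ to equal $0$ or $\pm\det M_S$, so only the value $\det M_S=\pm1$ needs to be pinned down.
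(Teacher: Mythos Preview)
Your treatment of parts (1) and (2) is correct and more explicit than the paper's, which simply cites Schrijver for (1) and Proposition~\ref{prop:dimeg} for (2). For part (3) your reduction is also exactly the paper's: restriction to the cotree coordinates is a lattice isomorphism $L=\ker_{\zz}E(G)\to\zz^{S}$, so $\det M_{S}=\pm1$ is equivalent to the columns of $M$ forming a $\zz$-basis of $L$; the paper expresses this as $M=NU$ with $N$ the fundamental-cycle matrix and $U$ unimodular.

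The obstacle you isolate is genuine, and in fact it cannot be overcome: part (3) is false for arbitrary $M$. Take $G$ on vertices $1,\dots,6$ with arcs $1\!\to\!2$, $2\!\to\!3$, $3\!\to\!1$, $2\!\to\!4$, $4\!\to\!3$, $3\!\to\!5$, $5\!\to\!1$, $1\!\to\!6$, $6\!\to\!2$ (strongly connected, $m-n+1=4$), spanning tree $T=\{2\!\to\!3,\,3\!\to\!1,\,2\!\to\!4,\,3\!\to\!5,\,1\!\to\!6\}$, and the four linearly independent directed cycles $C_{1}=1\,2\,4\,3\,1$, $C_{2}=1\,2\,3\,5\,1$, $C_{3}=1\,6\,2\,3\,1$, $C_{4}=1\,6\,2\,4\,3\,5\,1$. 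The cotree block $M_{S}$ then has determinant $-2$, reflecting the relation $C_{4}=C_{1}+C_{2}+C_{3}-2C_{0}$ with $C_{0}=1\,2\,3\,1$. So neither of your proposed attacks (realizing $M_{S}$ as a network matrix, or an inductive total-unimodularity argument) can succeed. The paper's own proof makes the same slip: it asserts that ``the proof of Theorem~\ref{thm:dimcycle} showed that the matrix $M$ also consists of a basis for $\ker_{\zz}E(G)$,'' but that proof only shows that directed cycles \emph{$\zz$-span} $L$, not that every maximal linearly independent subset of them is a $\zz$-basis. The downstream application in Lemma~\ref{lem:zeqb} is unaffected, since it needs only \emph{some} choice of $M$ and $T$ with $M_{2}$ integrally invertible, and such a choice always exists; but as stated, (3) does not hold for every $M$.
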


\begin{proof} Part (1) is a well-known result in the theory of
totally unimodular matrices.  See e.g.~\cite[Ch.~19]{Schrijver1986}.
Note that when $G$ is connected the only relation among the
rows of $E(G)$ is that the sum of all the rows is zero.  This means
that $E$ has rank $n-1$ for a connected graph.
Thus, part (2) follows from Proposition \ref{prop:dimeg}.

Now we prove part (3).  In \cite[Thm.~5.2]{Biggs} it is shown that a lattice basis of $\ker_{\zz} E(G)$ can
be constructed by the following procedure.  Let
$T$ be a spanning tree in $G$.  Assume that the columns
of $E$ are ordered so that the first $n-1$ columns correspond to the 
edges of $T$.  Each edge $e \in G$ that is not
in $T$ can be used to form a unique (undirected) cycle using $e$ plus edges
in $T$.  This cycle yields  a vector with $0, \pm 1$ entries that is
in the kernel of $E(G)$.  Moreover, taking all the $m-n+1$ cycles
that arise in this way and putting that as the columns of  a matrix
$N$ which has the form
$$
N = \begin{pmatrix}N'  \\  I  \end{pmatrix}
$$
where $I$ is an $(m-n+1) \times (m - n + 1)$ identity matrix.

On the other hand, the proof of Theorem \ref{thm:dimcycle} showed that
the matrix $M$ also consists of a basis for $\ker_{\zz} E(G)$.  Hence
$M = NU$ where $U$ is an $(m-n+1) \times (m-n+1)$ unimodular matrix 
(i.e.~$\det U = \pm 1$).  Writing this in block form we have
$$
M = \begin{pmatrix}
M' \\ M''  
\end{pmatrix}  =
\begin{pmatrix}N'U  \\  U  \end{pmatrix}  =  NU.
$$
Thus $\det M''  = \pm 1$.
\end{proof}

\begin{lemma}\label{lem:zeqb}
For any strongly connected graph $G$, 
there exist integer matrices $C$ and $D$ such that $I+CE=MD$. 
\end{lemma}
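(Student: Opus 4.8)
The plan is to solve $I + CE = MD$ for integer matrices $C$ and $D$ by working column by column and exploiting the structural facts about $E$ and $M$ collected in Lemma \ref{lem:decompm}. Fix a spanning tree $T$ of $G$, and order the $m$ edges so that the first $n-1$ are the tree edges and the last $m-n+1$ are the non-tree edges. Under this ordering write $E = \begin{pmatrix} E_T & E_N \end{pmatrix}$ in block column form, where $E_T$ is the $(n-1)\times(n-1)$ invertible (indeed unimodular, by parts (1)–(2) of Lemma \ref{lem:decompm}) submatrix on the tree edges; and write $M = \begin{pmatrix} M' \\ M'' \end{pmatrix}$ in block row form, where $M''$ is the $(m-n+1)\times(m-n+1)$ submatrix on the non-tree edges, which has $\det M'' = \pm 1$ by part (3).

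First I would reduce the problem: since $\mathrm{rank}\,E = n-1$ and the columns of $M$ span $\ker_{\zz} E$, the equation $I + CE = MD$ is equivalent, after multiplying on the left by $E$ (which kills $MD$), to the requirement that $E(I + CE) = 0$, i.e.\ $E + ECE = 0$; conversely, given any $C$ with $E + ECE = 0$, the matrix $I + CE$ has columns in $\ker_{\zz} E = \mathrm{colspan}_{\zz}(M)$, so $D$ exists and is integral because $M''$ is unimodular (solve the last $m-n+1$ rows for $D$, getting $D = (M'')^{-1}(I+CE)''$, an integer matrix, and then check it works on all rows since $M$'s columns are a $\zz$-basis of $\ker_{\zz}E$). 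So it suffices to produce an integer matrix $C$ with $ECE = -E$. Taking $C$ supported on its first $n-1$ rows (the tree edges), write $C = \begin{pmatrix} C_1 \\ 0 \end{pmatrix}$; then $ECE = E_T C_1 E$, and we need $E_T C_1 E = -E$, i.e.\ $C_1 E = -E_T^{-1}E$ (this is where totally unimodularity and invertibility of $E_T$ enter: $E_T^{-1}$ is an integer matrix). Now $E_T^{-1}E = \begin{pmatrix} I_{n-1} & E_T^{-1}E_N \end{pmatrix}$, and setting $C_1 = -E_T^{-1}$ gives $C_1 E = -E_T^{-1}E$, as desired. Thus $C := \begin{pmatrix} -E_T^{-1} \\ 0 \end{pmatrix}$ is an integer matrix solving $ECE = -E$, and then $D := (M'')^{-1}(I + CE)''$ is an integer matrix with $I + CE = MD$.

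I expect the main obstacle to be the equivalence "$I+CE=MD$ solvable over $\zz$ $\iff$ $I+CE$ has $\zz$-columns in $\ker_{\zz}E$", which requires knowing that the columns of $M$ form a lattice basis (not merely a $\qq$-basis) of $\ker_{\zz}E$ — but this is exactly what the proof of Lemma \ref{lem:decompm}(3) establishes, via $M = NU$ with $N$ the Biggs basis and $U$ unimodular. A secondary point to check carefully is that $E_T^{-1}$ is genuinely an integer matrix: this is immediate from $\det E_T = \pm 1$ (Lemma \ref{lem:decompm}(1)–(2)) together with Cramer's rule, since all minors of $E$, hence all cofactors of $E_T$, are in $\{0,\pm 1\}$. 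Everything else is bookkeeping with the block decompositions, and no further combinatorial input about $G$ is needed — the hypothesis that the dimension of the double characteristic polynomial map equals the number of independent cycles was already consumed in Lemma \ref{lem:matrixeq} and plays no role here; $G$ strongly connected is all we use, through $\mathrm{rank}\,E = n-1$ and the existence of the spanning tree $T$.
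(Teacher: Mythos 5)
Your proof is correct and is essentially the paper's argument: you use the same spanning-tree block decomposition of $E$ and $M$, invoke the same unimodularity facts from Lemma \ref{lem:decompm}, and arrive at the identical solution $C=\begin{pmatrix}-E_T^{-1}\\ 0\end{pmatrix}$, $D=\begin{pmatrix}0 & (M'')^{-1}\end{pmatrix}$ (the paper absorbs the sign by renaming $-C$ as $C$). The only cosmetic difference is in the verification: the paper checks the block identity by multiplying the two square $m\times m$ matrices in the reverse order and using $EM=0$, whereas you argue that the columns of $I+CE$ lie in the lattice $\ker_{\zz}E$ spanned by the columns of $M$ — both routes rest on the same ingredients.
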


\begin{proof} 
We can re-write the system $I+CE=MD$ as a matrix equation
\[
I=(C  M)
\begin{pmatrix}
E\\
D
\end{pmatrix}
\]
where we replace $-C$ with $C$ for simplicity.  

Let $E$ be partitioned into $(E_1 \ E_2)$, where $E_1$ is an $n-1$ by $n-1$ matrix corresponding to the edges in a spanning tree $T$.  Let $M$ be partitioned into $(M_1 \ M_2)^T$, where $M_1$ corresponds to the spanning tree $T$.  Thus, we can further partition in the form:

\[
\begin{pmatrix}
I & 0 \\
0 & I
\end{pmatrix}
=
\begin{pmatrix}
C_1 & M_1 \\
C_2 & M_2
\end{pmatrix}
\begin{pmatrix}
E_1 & E_2 \\
D_1 & D_2
\end{pmatrix}.
\]

We claim that taking $C_{1}  = E_{1}^{-1}$, $C_{2} = 0$, $D_{1} = 0$ and
$D_{2} = M_{2}^{-1}$ provides a valid integral solution to this equation.
First, note that both $C_{1}$ and $D_{2}$ will be integral matrices, by
Lemma \ref{lem:decompm}.  To show that these choices solve the matrix
equation, note that since we have the product of two matrices equal to the identity, it suffices to check this identity if we multiply the matrices
in the reverse order.  But we have
\[
\begin{pmatrix}
E_1 & E_2 \\
D_1 & D_2
\end{pmatrix}
\begin{pmatrix}
C_1 & M_1 \\
C_2 & M_2
\end{pmatrix} \, =  \,
\begin{pmatrix}
E_1 & E_2 \\
0 & M_2^{-1}
\end{pmatrix}
\begin{pmatrix}
E_{1}^{-1} & M_1 \\
0       & M_2
\end{pmatrix}
 \, = \,
\begin{pmatrix}
I & EM \\
0 & I
\end{pmatrix}.
\]
But $EM = 0$ since the columns of $M$ are in the kernel of $E$.
\end{proof}

\begin{proof}[Conclusion of proof of Theorem \ref{thm:main}]
We must prove the implication $(3) \implies (2)$.
According to Lemma \ref{lem:matrixeq}, it suffices to
find integer matrices $C$ and $D$ which solve the matrix
equation $I + CE = MD$.  Lemma \ref{lem:zeqb} shows that
such integer matrices always exist for any strongly
connected graph $G$.
\end{proof}

Note that the proof of Theorem \ref{thm:main} tells us the precise 
form of an identifiable reparametrization that we can use
for any linear compartment model where the 
monomial cycles in the graph $G$ are identifiable.  In particular,
if this is the case, let $T$ be a spanning tree in 
the graph $G$, and set all the parameters associated to edges 
in that spanning tree equal to $1$.  The resulting model
has identifiable parameters associated to the remaining edges in the
graph.  Furthermore, and most importantly, that resulting model
can be obtained by a variable rescaling, thus it makes sense as a 
non-dimensionalization of the original model.  Note, however, 
that those inferred parameters are {\bf not} identifiable
parameters of the original model.  Although they are identifiable
in the model with some parameters set to $1$, they do not tell
us precise values in the original model, only information about
the relative changes in the parameters as we rescale the model.

\begin{ex} 
In Example \ref{ex:mainreparam}, we found an identifiable scaling reparametrization of Example \ref{ex:mainex}.  We now show how we attained this reparametrization, using Lemma \ref{lem:zeqb}.  Let a spanning tree $T$ correspond to the edges $a_{12},a_{23},a_{34}$ and use the monomial cycles described in Example \ref{ex:mainident}.  Then, setting the first column of $C$ to zero and solving $I + CE = MD$ using the solution from Lemma \ref{lem:zeqb}, we get that,
$$
C_1=
\begin{pmatrix}
1 & 1 & 1 \\
0 & 1 & 1 \\
0 & 0 & 1
\end{pmatrix},
D_2=
\begin{pmatrix}
1 & 0 & 0 \\
0 & 1 & 0 \\
0 & 0 & 1
\end{pmatrix}
$$
which corresponds to the scaling reparametrization $X_1=x_1$, $X_2=a_{12}x_2$, 
$X_3=a_{12}a_{23}x_3, X_4=a_{12}a_{23}a_{34}x_4$.
\end{ex}


\section{Dimension of the image of the double characteristic polynomial map } \label{sec:dim}

Theorem \ref{thm:main} reduces the problem of
deciding whether or not an identifiable scaling reparametrization
exists to calculating the dimension of the image of the
double characteristic polynomial map.  In this section and
the next, we derive results on this dimension proving 
some necessary and some sufficient conditions on graphs
that guarantee that the image of the double characteristic polynomial
map has the correct dimension.  
We also discuss the results of systematic computations for graphs
with small numbers of vertices.
To save ink, we introduce the following
definitions:

\begin{defn}
We say a graph $G$ with $n$ vertices and $m$ edges  has the \emph{expected dimension} if the image of the double characteristic polynomial map has dimension $m+1$.  The graph is \emph{maximal} if $m = 2n-2$.
\end{defn}

Clearly, a graph with more than $m = 2n-2$ edges cannot have the
expected dimension, since the double characteristic polynomial map
has image contained in $\rr^{2n-1}$.   Also, as indicated previously,
we need only consider graphs that are strongly connected and
we stick with that case throughout.

\begin{defn} Let $G$ be a directed graph.  We say that $G$ has an \emph{exchange} if there is a vertex $i$ such that $1 \to i$ and $i \to 1$ are
both edges in the graph.
\end{defn}

\begin{prop} \label{prop:exness} Suppose that $G$ is a strongly connected
maximal graph with the expected dimension.  Then $G$ has an exchange.
\end{prop}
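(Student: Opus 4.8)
The plan is to argue by contradiction: suppose $G$ is strongly connected and maximal (so $m = 2n-2$) with the expected dimension, but $G$ has no exchange. I would first use the hypothesis of expected dimension together with Theorem~\ref{thm:main} to translate the situation into combinatorics: the image of the double characteristic polynomial map has dimension $m+1 = 2n-1$, which by Proposition~\ref{prop:cyclemapbound} forces the cycle map $\pi$ to have full image $\rr^{2n-1}$ (its image has dimension $\leq 2n-1$ by Theorem~\ref{thm:dimcycle}, and here equals $2n-1$), and moreover $\phi$ restricted to $\mathrm{im}\,\pi$ must be generically finite-to-one. Equivalently, the $2n-1$ monomial cycles of $G$ (the $n$ diagonals plus $m-n+1 = n-1$ independent longer cycles) are algebraically independent \emph{and} the coefficients $c_1,\dots,c_n,d_1,\dots,d_{n-1}$ generate the same field (up to finite extension) as all the monomial cycles together. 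This last point is the crucial extra leverage: not only must the cycles be independent, but no genuine information about them is lost in passing to the characteristic polynomial coefficients of $A$ and of $A_1$.

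Next I would examine what cycles through vertex $1$ can look like. Every edge lies on a cycle since $G$ is strongly connected, and in particular the edges incident to vertex $1$ lie on cycles through $1$. If $G$ has no exchange, then there is no $2$-cycle through vertex~$1$, so every cycle through vertex $1$ has length $\geq 3$. I would then count edges: vertex $1$ has some incoming edges and some outgoing edges; label the out-neighbors and in-neighbors of $1$. The key observation I expect to exploit is how vertex $1$ interacts with the submatrix $A_1$ (delete row/column $1$): the $d_i$'s see only cycles of $G$ that avoid vertex $1$, while the $c_i$'s see cycles through $1$ only in the combination where a cycle through $1$ is multiplied against vertex-disjoint cycles avoiding $1$. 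With maximality $m = 2n-2$ the graph is "tight"—on average two edges per vertex—so I would try to show that the absence of an exchange forces either (a) two of the monomial cycles to coincide or be algebraically dependent (contradicting full-dimensional image of $\pi$), or more likely (b) some monomial cycle $a^C$ through vertex $1$ cannot be separated from the others using only the $c_i$ and $d_i$, i.e.\ it always appears glued to another cycle in $c$ and never appears in $d$—so the field generated by $c,d$ is strictly smaller than that generated by all monomial cycles, contradicting that the image of $\phi\circ\pi$ has dimension $2n-1$.

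The main obstacle, I expect, is step (b): making precise and rigorous the claim that without an exchange, a cycle through vertex $1$ is "entangled" in the $c_i$'s. The subtlety is that even a length-$3$ cycle $1 \to i \to j \to 1$ contributes the monomial $a_{1i}a_{ij}a_{j1}$ to $c_3$ as a standalone term (when $n=3$) or together with vertex-disjoint $1$-cycles (when $n>3$), so it is not literally absent from the coefficients—rather, one must show it cannot be algebraically disentangled from the elementary-symmetric-in-diagonals data and the cycles avoiding $1$. I would handle this by a careful dimension/transcendence-degree count: partition the $2n-1$ monomial cycles into those appearing in $A_1$ (there are at most $2(n-1)-1 = 2n-3$ of them, by applying the $m+1$ bound to the strongly connected part of $G\setminus\{1\}$) versus the $\geq 2$ cycles that genuinely involve vertex $1$, and show the $c_i$'s can supply at most one new algebraically independent function beyond what $A_1$ already gives unless there is an exchange. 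Pinning down exactly where the "extra" degree of freedom must come from, and showing a $2$-cycle at vertex $1$ is the only mechanism, is the heart of the argument; everything else is bookkeeping with the incidence structure and the formula of Theorem~\ref{thm:funccycles}.
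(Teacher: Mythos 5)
Your overall framing is right --- for a maximal graph the cycle map already has image of dimension $2n-1$ (Theorem~\ref{thm:dimcycle}), so the only way the expected dimension can fail is through an algebraic dependency among the coefficients $c_1,\dots,c_n,d_1,\dots,d_{n-1}$, and you correctly sense that the $2$-cycles at vertex $1$ are the only place $c_2$ can acquire information not already present in the traces and in $A_1$. But the proposal stops exactly where the proof has to happen: you explicitly defer ``pinning down where the extra degree of freedom must come from'' to a transcendence-degree bookkeeping argument that you do not carry out, and the one quantitative claim you do make along the way (that at most $2n-3$ algebraically independent cycles live in $A_1$, ``by applying the $m+1$ bound to the strongly connected part of $G\setminus\{1\}$'') is not justified --- $G\setminus\{1\}$ need not be strongly connected, and counting independent cycles of $A_1$ is not the same as bounding what the $d_i$ determine. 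As written, this is a plan with an acknowledged hole at its center, not a proof.

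The missing idea is a single explicit identity, and it makes the whole cycle-census unnecessary. If $G$ has no exchange, then for every $i\ge 2$ either $a_{1i}=0$ or $a_{i1}=0$, so every principal $2\times 2$ minor of $A$ involving the $(1,1)$ entry equals $a_{11}a_{ii}$. Since $c_1=-\mathrm{tr}(A)$, $d_1=-\mathrm{tr}(A_1)$, $c_2$ is the sum of the principal $2\times2$ minors of $A$, and $d_2$ the sum of those of $A_1$, one gets
\begin{equation*}
c_2 \;=\; \bigl(c_1-d_1\bigr)\,d_1+d_2 ,
\end{equation*}
because $(c_1-d_1)d_1=a_{11}\sum_{i\ge 2}a_{ii}$ accounts exactly for the minors through row/column $1$ and $d_2$ for the rest. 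This is one polynomial relation among the $2n-1$ coordinates of the double characteristic polynomial map, so its image has dimension at most $2n-2<m+1$ when $m=2n-2$, contradicting the expected-dimension hypothesis. You should replace the proposed asymptotic accounting of cycles with this direct computation; note also that the relation only forces a dimension drop because the graph is maximal, which is why an exchange is not necessary for non-maximal graphs (cf.\ the directed-cycle example following the proposition).
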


\begin{proof} Let $A$ be the full $n$ by $n$ matrix in our ODE system and let $A_1$ be the $n-1$ by $n-1$ matrix where the first row and first column have been deleted.  Assume there is no exchange with compartment 1.  Then this means any 2 by 2 principal minor of $A$ involving the $(1,1)$ position will be of the form $a_{11}a_{ii}$ for $i=2,\ldots,n$ since no exchange with compartment 1 means that either $a_{1i}$ or $a_{i1}$ is zero.  Note that $c_1(A)$ corresponds to the (negated) trace of $A$, $c_2(A)$ corresponds to the sum of all principal 2 by 2 minors of $A$, $d_1(A_1)$ corresponds to the (negated) trace of $A_1$ and $d_2(A_1)$ corresponds to the sum of all principal 2 by 2 minors of $A_1$.  Then we have the relationship $c_2(A)=(c_1(A)-d_1(A_1))d_1(A_1)+d_2(A_1)$.  Thus the coefficients of the input-output equation are algebraically dependent.
\end{proof}

On the other hand, an exchange is not necessary for a graph
to have the expected dimension if the graph is not maximal.

\begin{prop}
Let $G$ be a strongly connected graph with $n$ vertices and $n$ edges
(that is, $G$ is a directed cycle).  Then $G$ has the expected dimension.
\end{prop}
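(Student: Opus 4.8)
The plan is to write down both characteristic polynomials explicitly — for a directed cycle they are almost those of diagonal matrices — and then extract $n+1$ algebraically independent identifiable functions, matching the a priori upper bound.

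First I would normalize the situation: after relabeling the vertices we may assume the edge set of $G$ is $\{1\to 2,\, 2\to 3,\, \ldots,\, (n-1)\to n,\, n\to 1\}$. The directed cycles of $G$ are then exactly the $n$ loops, contributing the monomial cycles $a_{11},\dots,a_{nn}$, together with the single $n$-cycle $C$, whose monomial $a^{C}$ is the product of all $n$ off-diagonal parameters. These $n+1$ monomial cycles are algebraically independent, so Proposition \ref{prop:cyclemapbound} combined with Theorem \ref{thm:dimcycle} already gives $\dim\,{\rm im}\,c\le n+1=m+1$; it remains to prove the reverse inequality.

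Next I would compute the two characteristic polynomials. Deleting vertex $1$ from the directed cycle leaves the directed path $2\to 3\to\cdots\to n$, so $A_{1}$ is bidiagonal (hence triangular) and $\tilde f_{1}(\lambda)=\prod_{i=2}^{n}(\lambda-a_{ii})$; consequently $d_{1},\dots,d_{n-1}$ are, up to sign, the elementary symmetric polynomials $E_{1},\dots,E_{n-1}$ in $a_{22},\dots,a_{nn}$. For $A$ itself, Theorem \ref{thm:funccycles} shows that for $i<n$ the only vertex-disjoint cycle collections using exactly $i$ edges are $i$-element subsets of the loops, so $c_{i}=(-1)^{i}E_{i}(a_{11},\dots,a_{nn})$, while for $i=n$ the only extra contribution comes from the $n$-cycle, giving $c_{n}=(-1)^{n}E_{n}(a_{11},\dots,a_{nn})\pm a^{C}$. (By Proposition \ref{prop:strong}, $\tilde f$ and $\tilde f_{1}$ are coprime, so these are indeed the coefficients of the input-output equation.) Then I would read off identifiable functions: from $d_{1},\dots,d_{n-1}$ the parameters $a_{22},\dots,a_{nn}$ are identifiable (the map to elementary symmetric functions is generically finite-to-one); adding $c_{1}=-E_{1}(a_{11},\dots,a_{nn})$ makes $a_{11}$ identifiable; and then $a^{C}=\pm\bigl(c_{n}-(-1)^{n}a_{11}\cdots a_{nn}\bigr)$ is a polynomial in identifiable functions, hence identifiable. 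The $n+1$ functions $a_{11},\dots,a_{nn},a^{C}$ are algebraically independent, so $\dim\,{\rm im}\,c\ge n+1$, and combined with the upper bound $\dim\,{\rm im}\,c=n+1=m+1$, which is the expected dimension.

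I do not anticipate a real obstacle here: everything rests on the single structural observation that removing the input vertex from one cycle destroys every cycle except the loops, so both characteristic polynomials are essentially those of diagonal matrices. The only point needing a little care is the bookkeeping for $c_{n}$ — the exact sign with which $a^{C}$ enters — but since the argument only uses that $a^{C}$ appears with a nonzero coefficient, this is immaterial.
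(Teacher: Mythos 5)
Your proposal is correct and follows essentially the same route as the paper's proof: both observe that deleting vertex $1$ leaves only the loops, so the characteristic polynomial of $A_1$ factors as $\prod_{i\ge 2}(\lambda-a_{ii})$, recover $a_{22},\dots,a_{nn}$ and then $a_{11}$ from $c_1-d_1$, and finally extract the $n$-cycle monomial from $c_n$, giving $n+1$ algebraically independent identifiable functions. Your extra remarks (the upper bound via the cycle map and coprimality via strong connectivity) are harmless additions to the same argument.
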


\begin{proof}
The graph $G$ contains only one cycle $K$, which passes through all the vertices.
This means that the characteristic polynomial of $A_1$ is 
$$
(\lambda - a_{22})(\lambda - a_{33})  \cdots (\lambda - a_{nn}).
$$					
Since the roots of a polynomial can be determined from its coefficients, then 
all of $a_{22}, \ldots, a_{nn}$ are locally identifiable.
Parameter $a_{11}$ is identifiable (in fact, for any graph) by the formula
$a_{11}  = -c_1 + d_1$.  
Since $c_n  =  a_{11} \cdots a_{nn}  + (-1)^{n-1} K$ and $d_{n-1}  =  a_{22} \cdots a_{nn}$, we have 
$K =  (-1)^{n-1} (c_n +  (c_1 - d_1) d_{n-1})$ so the cycle $K$ is 
also identifiable.
\end{proof}

Next we consider situations where we can perform modifications
to the graph $G$ and preserve the property that $G$ has the
expected dimension.

\begin{prop} \label{prop:addtwo} 
Let $G$ be a graph that has the expected dimension.  Let $G'$ be
a new graph obtained from $G$ by adding a new vertex $1'$ and an exchange
$1 \to 1'$, $1' \to 1$, and making $1'$ be the new input-output node.
Then $G'$ has the expected dimension as well.
\end{prop}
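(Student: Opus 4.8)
The plan is to realize the enlarged model explicitly, relate its two characteristic polynomials to those of the original model, and then harvest enough identifiable functions of the parameters of $G'$ to force the image of its double characteristic polynomial map to have dimension $m+3$ — which is the expected dimension for a graph with $n+1$ vertices and $m+2$ edges.

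First I would relabel so that the new node $1'$ becomes vertex $1$ of $G'$ and the old vertices $1,\ldots,n$ become $2,\ldots,n+1$. Then the matrix $A'$ of $G'$ has first row $(a'_{11},a'_{12},0,\ldots,0)$, first column $(a'_{11},a'_{21},0,\ldots,0)^{T}$, and lower-right $n\times n$ block equal to the original matrix $A$; in particular $A'_{1}=A$, so $\tilde f'_{1}=\tilde f$ is the full characteristic polynomial of $A$. Expanding $\det(\lambda I-A')$ along the first row (equivalently, via the Schur complement together with the standard identity $[(\lambda I-A)^{-1}]_{11}=\tilde f_{1}/\tilde f$) yields
$$
\tilde f' \;=\; (\lambda-a'_{11})\,\tilde f \;-\; a'_{12}a'_{21}\,\tilde f_{1}.
$$
Since $G$ is strongly connected and we have adjoined an exchange at vertex $1'$, the graph $G'$ is strongly connected, so by Proposition \ref{prop:strong} the polynomials $\tilde f'$ and $\tilde f'_{1}$ are coprime and the double characteristic polynomial map of $G'$ records precisely the coefficients $c'_{1},\ldots,c'_{n+1}$ of $\tilde f'$ and $d'_{1},\ldots,d'_{n}$ of $\tilde f'_{1}$.

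Next I would extract identifiable functions by comparing coefficients. From $\tilde f'_{1}=\tilde f$ we get $d'_{i}=c_{i}(A)$ for all $i$, so every coefficient of the characteristic polynomial of $A$ is identifiable from $G'$. From the displayed formula, $c'_{1}=c_{1}(A)-a'_{11}$ shows $a'_{11}$ is identifiable; $c'_{2}=c_{2}(A)-a'_{11}c_{1}(A)-a'_{12}a'_{21}$ shows $a'_{12}a'_{21}$ is identifiable; and for $k=3,\ldots,n+1$ one has $c'_{k}=c_{k}(A)-a'_{11}c_{k-1}(A)-a'_{12}a'_{21}\,d_{k-2}(A_{1})$, so dividing by $a'_{12}a'_{21}$ (which is nonzero on a dense open set) shows that each of $d_{1}(A_{1}),\ldots,d_{n-1}(A_{1})$ is identifiable from $G'$ as well.

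Finally I would count dimensions. The $m+1$ functions $c_{1}(A),\ldots,c_{n}(A),d_{1}(A_{1}),\ldots,d_{n-1}(A_{1})$ are exactly the coordinates of the double characteristic polynomial map of $G$, whose image has dimension $m+1$ by hypothesis; on the parameter space of $G'$ they depend only on the $A$-block, whereas the two further identifiable functions $a'_{11}$ and $a'_{12}a'_{21}$ depend only on the three new parameters, so the joint map of all $m+3$ of these functions has image of dimension $(m+1)+2=m+3$. Hence the double characteristic polynomial map of $G'$ has image of dimension at least $m+3$; since $G'$ has $n+1$ vertices and $m+2$ edges, Proposition \ref{prop:cyclemapbound} and Theorem \ref{thm:dimcycle} give the matching upper bound $m+3$. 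Therefore $G'$ has the expected dimension. The only genuinely computational point is the determinant identity of the second paragraph; the one spot that needs care is the last step, where one uses that the $m+1$ "old" identifiable functions and the two "new" ones involve disjoint sets of parameters, so that the dimension of the image of the combined map is the sum of the two dimensions.
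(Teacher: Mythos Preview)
Your proof is correct and follows essentially the same approach as the paper: both arguments hinge on the expansion $\det(\lambda I-A')=(\lambda-a'_{11})\det(\lambda I-A)-a'_{12}a'_{21}\det(\lambda I-A_{1})$, use the first two nontrivial coefficients to recover $a'_{11}$ and $a'_{12}a'_{21}$, then solve for the remaining $d_{k}(A_{1})$, and conclude that the image has dimension $m+3$. Your write-up is in fact a bit more explicit than the paper's about the dimension count (separating the parameters into the $A$-block and the three new ones) and about invoking the upper bound from Proposition~\ref{prop:cyclemapbound} and Theorem~\ref{thm:dimcycle}.
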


\begin{proof} Let $A$ be the full matrix associated to the graph $G'$, $A_1$ be the matrix where the first row and first column have been deleted (and, hence
associated to the graph $G$), and $A_2$ be the matrix where the first two rows and first two columns have been deleted.  
We assume that the dimension of the image of the double characteristic polynomial
map associated to $G$ is $m+1$, and we want to show that 
for $G'$ we get $m+3$.

Let the characteristic polynomials $ \det(\lambda{I}-A)$, 
$ \det(\lambda{I}-A_1)$, and $ 
\det(\lambda{I}-A_2)$  be written (respectively) as:
$$\lambda^{n}+C_1\lambda^{n-1}+ \cdots + C_{n-1}\lambda+C_{n}$$ $$\lambda^{n-1}+c_1\lambda^{n-2}+\cdots+c_{n-2}\lambda+c_{n-1}$$
$$\lambda^{n-2}+d_1\lambda^{n-3}+\cdots+d_{n-3}\lambda+d_{n-2}$$
Then $\det(\lambda{I}-A)$ can be expanded as:
\begin{equation}\label{eq:expand}
\det(\lambda{I}-A)=(\lambda-a_{11})\det(\lambda{I}-A_1)-a_{12}a_{21}\det(\lambda{I}-A_2).
\end{equation}
This means $\det(\lambda{I}-A)$ can be written as: $\lambda^n+(-a_{11}+c_1)\lambda^{n-1}+(-a_{11}c_1+c_2-a_{12}a_{21})\lambda^{n-2}+(-a_{11}c_2+c_3-a_{12}a_{21}d_1)\lambda^{n-3}+\cdots+(-a_{11}c_{n-2}+c_{n-1}-a_{12}a_{21}d_{n-3})\lambda-a_{11}c_{n-1}-a_{12}a_{21}d_{n-2}$.

The double characteristic polynomial map associated to the graph $G'$
involves the characteristic polynomials of $A$ and $A_{1}$.  So
looking at the first two nontrivial coefficients of $ \det(\lambda{I}-A)$,
which are $-a_{11}+c_1$  and $-a_{11}c_1+c_2-a_{12}a_{21}$,
we can use the coefficients of $ \det(\lambda{I}-A_1)$ to solve for
$a_{11}$ and the cycle $a_{12}a_{21}$.  Hence, both of those coefficients
are identifiable functions.
Then Equation (\ref{eq:expand}) allows us to solve for the coefficients
of $\det(\lambda{I}-A_2)$.  Then, since we can perform rational manipulations
to solve for $a_{11}$, $a_{12}a_{21}$, and the coefficients of the
characteristic polynomials $ \det(\lambda{I}-A_1)$ and $\det(\lambda{I}-A_2)$,
this implies that the dimension of the image of the
 double characteristic polynomial
map associated to $G'$ is $m+3$ as desired.
\end{proof}

For the remainder of this section we prove a constructive
result which allows us to take a model with the expected
dimension and produce a new model with the expected
dimension adding one new vertex.  This construction
depends on the graph having a chain of cycles.

\begin{defn}
A \emph{chain of cycles} is a graph $H$ which consists of a sequence
of directed cycles that are attached to each other in a 
chain, by joining at the vertices.
\end{defn}

\begin{rmk} The graph in Example \ref{ex:mainex} contains a chain of cycles as a
subgraph,  where $a_{12}a_{21}$ and $a_{23}a_{34}a_{42}$ are the directed cycles that are attached to each other in a chain.  Figure 2 shows a general chain of three cycles.
\end{rmk}

\begin{figure}[h]
\begin{center}
\resizebox{!}{2cm}{
\includegraphics{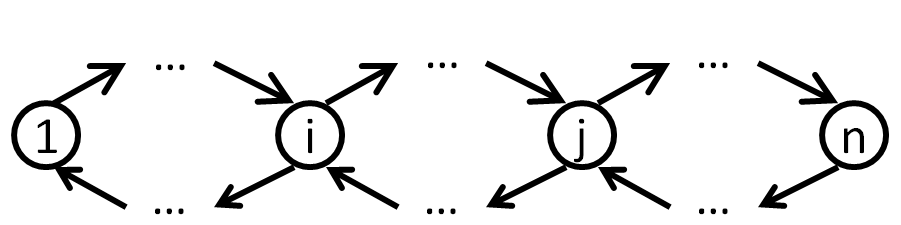}}
\end{center}\caption{A chain of cycles}
\end{figure}

\begin{thm}\label{thm:chainofcycles}
Let $G'$ be a graph that has the expected dimension with $n-1$
vertices.  Let $G$ be a new graph obtained from $G'$ by adding
a new vertex $n$ and two edges $k \to n$ and $n \to l$
and such that $G$ has a chain of cycles containing both
$1$ and $n$.  Then $G$ has the expected dimension.
\end{thm}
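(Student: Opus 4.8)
\emph{Proof plan.} The plan is to mimic the proof of Proposition~\ref{prop:addtwo}. Adjoining the vertex $n$ together with the edges $k\to n$ and $n\to l$ adds to $G$ exactly two new algebraically independent monomial cycles beyond those of $G'$: the $1$-cycle $a_{nn}$, and one longer cycle through $n$. I would show that these two quantities, together with all the coordinate data of the double characteristic polynomial map $c_{G'}$ of $G'$, are identifiable from the double characteristic polynomial map $c_{G}$; combined with the bound $\dim {\rm im}\, c_{G}\le m+1$ of Proposition~\ref{prop:cyclemapbound} and Theorem~\ref{thm:dimcycle}, this yields the expected dimension.

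The first step is to peel off vertex $n$. Write $A=A(G)$, and let $A'=A(G')$ be $A$ with row and column $n$ deleted, $A_{1}$ the matrix $A$ with row and column $1$ deleted, and $A'_{1}=A(G')_{1}$ the matrix with rows and columns $1$ and $n$ deleted. Since $n$ has unique in-neighbor $k$ and unique out-neighbor $l$, cofactor expansion along row and column $n$ gives
\[
\det(\lambda I-A)=(\lambda-a_{nn})\det(\lambda I-A')\ \pm\ a_{nk}a_{ln}\,R(\lambda),
\]
where $R(\lambda)$ is the minor of $\lambda I-A'$ obtained by deleting row $l$ and column $k$. When $1\notin\{k,l\}$ the same expansion of $\det(\lambda I-A_{1})$ gives $(\lambda-a_{nn})\det(\lambda I-A'_{1})\pm a_{nk}a_{ln}R_{1}(\lambda)$, while if $k=1$ or $l=1$ the matrix $A_{1}$ is block triangular in the $n$-coordinate and the extra term vanishes. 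By Theorem~\ref{thm:funccycles}, the coefficients of $a_{nk}a_{ln}R(\lambda)$ are exactly the monomial cycles of $G$ through $n$ --- each is $a_{nk}a_{ln}$ times a path monomial from $l$ to $k$ in $G'$ --- and among them is the monomial $a^{C}$ of the unique cycle $C$ of the prescribed chain containing $n$. A degree count ($\deg_{\lambda}R\le n-3$ when $k\ne l$) shows the two leading nontrivial coefficients of each characteristic polynomial depend only on the $(\lambda-a_{nn})\det(\lambda I-\cdot)$ terms, so differencing them recovers $a_{11}$ as in Proposition~\ref{prop:addtwo}.

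The second step is to recover $a_{nn}$, and this is where the chain-of-cycles hypothesis is used. The chain contains a cycle $C_{1}$ through vertex $1$; in the cycle-cover expansion of the constant term $\det A$ there is a monomial $a_{nn}\cdot a^{C_{1}}\cdot(\text{product of leaks at the remaining vertices})$ in which $a_{nn}$ occurs and whose other factors are cycles either through vertex $1$ or of $G'$; because all monomial cycles of $G'$ are identifiable from $c_{G'}$ (as $G'$ has the expected dimension, by Theorem~\ref{thm:main}) and cycles through $1$ can be separated by comparison with $\det A_{1}$, one isolates this monomial and solves rationally for $a_{nn}$. (Equivalently, one may argue by showing the Jacobian of $c_{G}$, written in terms of the $m+1$ monomial cycle coordinates of $G$, has full rank $m+1$ at a generic point, using that the corresponding Jacobian for $G'$ has rank $m'+1$.) Once $a_{nn}$ is known, the two displayed identities determine $\det(\lambda I-A')$, $\det(\lambda I-A'_{1})$ and $a_{nk}a_{ln}R(\lambda)$, hence all of $c_{G'}$ and the new monomial cycle $a^{C}$.

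Finally, a dimension count closes the argument. As $G'$ has the expected dimension, $c_{G'}$ supplies $m'+1=m-1$ algebraically independent identifiable functions; adjoining $a_{nn}$ and $a^{C}=a_{nk}a_{ln}\cdot(\text{path monomial})$ --- which are algebraically independent over $\rr(c_{G'})$ since they involve the parameters $a_{nn},a_{nk},a_{ln}$ absent from $G'$ --- gives $m+1$ algebraically independent functions identifiable from $c_{G}$, so $\dim {\rm im}\, c_{G}=m+1$. The step I expect to be the main obstacle is exactly the recovery of $a_{nn}$: carrying out the rational elimination (or the rank computation) \emph{uniformly} over the positions of $k$ and $l$ relative to vertex $1$, the case $k=l$ being essentially Proposition~\ref{prop:addtwo}. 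One should also check at the outset that the chain-of-cycles hypothesis forces $G$, and hence $G'$, to be strongly connected, so that by Proposition~\ref{prop:strong} the double characteristic polynomial maps are genuinely the coefficient lists of the two characteristic polynomials.
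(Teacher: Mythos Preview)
Your proposal has a genuine gap at precisely the step you flag as ``the main obstacle'': the recovery of $a_{nn}$ and, more broadly, the separation of $c_{G'}$ from $c_{G}$. The analogy with Proposition~\ref{prop:addtwo} breaks down in an essential way. There the new vertex is the input-output node and is joined only to the old input-output node, so $\det(\lambda I-A_{1})$ is literally a coordinate of $c_{G}$ \emph{and} simultaneously the characteristic polynomial of $A(G')$. Here vertex $n$ is far from vertex $1$; the polynomials $\det(\lambda I-A')$ and $\det(\lambda I-A'_{1})$ that constitute $c_{G'}$ are \emph{not} coordinates of $c_{G}$. Your cofactor expansion relates them to $c_{G}$ only through the off-diagonal minors $R(\lambda),R_{1}(\lambda)$, which are unknown and are not characteristic polynomials. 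Counting shows the problem: from the four leading coefficients $c_{1},c_{2},d_{1},d_{2}$ you obtain four equations in the five unknowns $a_{nn},c'_{1},c'_{2},d'_{1},d'_{2}$, and beyond that every further equation drags in new coefficients of $R$ and $R_{1}$. Your attempt to isolate $a_{nn}$ from a single monomial in $\det A$ is circular: you invoke identifiability of cycles of $G'$ from $c_{G'}$, but $c_{G'}$ is exactly what you have not yet extracted from $c_{G}$.

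The paper's proof avoids this circularity by a different mechanism. It does not try to solve for $a_{nn}$ rationally at all. Instead it introduces a weight vector $\omega$ (weight $0$ on $a_{nn}$, weight $\tfrac{1}{2}$ on $a_{nk},a_{ln}$, weight $1$ elsewhere) and passes to the initial-form parametrization $\phi_{G,\omega}$. Because the coordinate polynomials are homogeneous, this degeneration kills every cycle through $n$ in all but the two constant coefficients $c_{n},d_{n-1}$, so $\phi_{G,\omega}$ agrees with $\phi_{G'}$ on the first $2n-3$ coordinates. The Jacobian of $\phi_{G,\omega}$ is then block lower-triangular with $J(\phi_{G'})$ in the upper-left block and a $2\times 3$ block $C$ (the derivatives of $c_{n},d_{n-1}$ with respect to $a_{nn},a_{nk},a_{ln}$) in the lower-right. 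The chain-of-cycles hypothesis enters only now, and concretely: one specializes all parameters off the chain to $0$, the diagonals to $1$, and the edge weights on each cycle $s_{i}$ of the chain so that its monomial equals $(-1)^{\ell(s_{i})-1}$; the entries of $C$ then count vertex-disjoint cycle covers of the chain and turn out to be Fibonacci numbers $F_{t-1},F_{t},F_{t+1}$, whose classical identity $F_{t+1}F_{t-1}-F_{t}^{2}=(-1)^{t}$ gives ${\rm rank}\,C=2$. Corollary~\ref{cor:initialdim} then transfers the rank bound from $\phi_{G,\omega}$ back to $\phi_{G}$. Your parenthetical Jacobian remark points in this direction, but without the degeneration the Jacobian of $\phi_{G}$ has no block-triangular structure to exploit, and without the Fibonacci specialization there is no evident way to certify the rank of the residual block.
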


To prove Theorem \ref{thm:chainofcycles} requires a number of key ideas
which are assembled together in the present section.  
One key tool in the argument is to use a degeneration strategy, via
Gr\"obner bases.

Consider a $\kk$-algebra 
homomorphism $\phi^{*}: \kk[x] =  \kk[x_{1}, \ldots, x_{n}] 
\rightarrow \kk[y] =  \kk[y_{1}, \ldots, y_{m}]$.  Let $\omega \in \qq^{m}$
be a \emph{weight vector} on the polynomial ring $\kk[y]$. 
This induces a weight order on the polynomial ring $\kk[y]$ by 
which we can extract initial forms.  The weight of a monomial
$y^{a}$ is defined to be $\omega \cdot a$, and for a polynomial
$f$, the initial form ${\rm in}_{\omega}(f)$ is the sum
of all terms of $f$ whose monomial has the highest weight.

Since $\phi^{*}: \kk[x] \rightarrow \kk[y]$ is a $\kk$-algebra
homomorphism, it is described by the image polynomials $\phi(x_{i}) = f_{i}$.
Define the initial homomorphism $\phi^{*}_{\omega}: \kk[x]
 \rightarrow \kk[y]$ by $\phi^{*}_{\omega}(x_{i}) = {\rm in}_{\omega}(f_{i})$,
obtained by taking initial terms of all the polynomials $f_{i}$.

The map $\phi^{*}$ and the weight vector $\omega$ also induce a 
weight order on $\kk[x]$.  The induced weight of $\phi^{*} \omega$
is defined so that the weight of $x_{i}$ is equal to the largest
$\omega$ weight of any monomial appearing in $f_{i}$.

\begin{lemma}\label{lem:initial}
Let $\omega \in \qq^{m}$ be a weight vector and $\phi^{*}: \kk[x] \rightarrow
\kk[y]$ be a $\kk$-algebra homomorphism, let $I = \ker \phi^{*}$ and $I' = \ker \phi^{*}_{\omega}$.  Then
$$
{\rm in}_{\phi^{*}\omega} I  \subseteq  I'.
$$
\end{lemma}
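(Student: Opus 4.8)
The plan is to unwind the definitions and reduce everything to the statement that for $f \in I$, the initial form ${\rm in}_\omega(\phi^*(f))$, taken with respect to the weight order induced by $\phi^*\omega$, coincides with $\phi^*_\omega$ applied to ${\rm in}_{\phi^*\omega}(f)$ — provided enough care is taken, since in general $\phi^*_\omega$ applied to a high-weight monomial can have its own cancellation or drop in weight. Concretely, let $f \in I = \ker\phi^*$; I want to show ${\rm in}_{\phi^*\omega}(f) \in I' = \ker \phi^*_\omega$. Write $f = \sum_a \lambda_a x^a$ and decompose it into weight-homogeneous pieces with respect to the induced weight $\phi^*\omega$: say $f = f_{w_0} + (\text{lower weight terms})$, where $f_{w_0} = {\rm in}_{\phi^*\omega}(f)$ is the top weight-$w_0$ part. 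The key observation is that the induced weight of $x_i$ is by definition the top $\omega$-weight of any monomial in $f_i = \phi^*(x_i)$; hence for a monomial $x^a$, its induced weight $w(x^a) = \phi^*\omega \cdot a$ equals the top $\omega$-weight of any monomial occurring in $\phi^*(x^a) = \prod_i f_i^{a_i}$ — but only the products of the top-weight terms survive, i.e. ${\rm in}_\omega(\phi^*(x^a)) = \phi^*_\omega(x^a)$ exactly, because initial forms of a product are the product of initial forms in a polynomial ring (no zero divisors).

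The next step: because the induced weight of every monomial $x^a$ matches the top $\omega$-weight of $\phi^*(x^a)$, applying $\phi^*$ to $f$ and taking $\omega$-initial forms, the contribution to ${\rm in}_\omega(\phi^*(f))$ comes exactly from the highest induced-weight part $f_{w_0}$ of $f$ — terms of strictly lower induced weight map to polynomials whose $\omega$-weights are all strictly smaller than $w_0$, so they cannot contribute to the $\omega$-initial form. Therefore
$$
0 = {\rm in}_\omega(\phi^*(f)) = {\rm in}_\omega\bigl(\phi^*(f_{w_0})\bigr) + (\text{possible lower }\omega\text{-weight junk}),
$$
but since $f \in \ker\phi^*$ we have $\phi^*(f)=0$, so in fact $\phi^*(f)=0$ and we need to track how $\phi^*_\omega(f_{w_0})$ sits inside this. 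The cleaner route is: $\phi^*(f) = \phi^*(f_{w_0}) + \phi^*(\text{lower})$, where $\phi^*(f_{w_0})$ is a sum of polynomials each of $\omega$-weight $\le w_0$ and with top $\omega$-weight part exactly $\phi^*_\omega(f_{w_0})$ (if no cancellation) or with even lower weight; and $\phi^*(\text{lower})$ has all $\omega$-weights $< w_0$. Since $\phi^*(f) = 0$, the weight-$w_0$ part of $\phi^*(f)$ is zero; this weight-$w_0$ part is precisely $\phi^*_\omega(f_{w_0}) = \phi^*_\omega({\rm in}_{\phi^*\omega}(f))$, since the lower-induced-weight terms of $f$ contribute only to $\omega$-weights $< w_0$, and the possible loss of weight in $\phi^*(f_{w_0})$ only removes terms, never adds weight-$w_0$ terms beyond $\phi^*_\omega(f_{w_0})$. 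Hence $\phi^*_\omega({\rm in}_{\phi^*\omega}(f)) = 0$, i.e. ${\rm in}_{\phi^*\omega}(f) \in I'$.

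Finally, since ${\rm in}_{\phi^*\omega}(I)$ is (by definition of an initial ideal with respect to a weight order) generated as a vector space by $\{{\rm in}_{\phi^*\omega}(f) : f \in I\}$, and each such generator lies in $I'$, we conclude ${\rm in}_{\phi^*\omega}(I) \subseteq I'$. The main obstacle I anticipate is the bookkeeping in the middle paragraph: carefully arguing that applying $\phi^*$ to a monomial $x^a$ produces a polynomial whose \emph{top} $\omega$-weight is exactly the induced weight $\phi^*\omega \cdot a$ with top part $\phi^*_\omega(x^a)$ (this uses that $\kk[y]$ is a domain, so ${\rm in}_\omega(gh) = {\rm in}_\omega(g){\rm in}_\omega(h)$), and then that summing over the weight-$w_0$ monomials of $f$ the top parts add up to $\phi^*_\omega(f_{w_0})$ without unexpected cancellation eliminating this whole block — which is exactly why one only gets containment $\subseteq$ and not equality, since $\phi^*_\omega(f_{w_0})$ could itself vanish while $f_{w_0} \neq 0$, putting extra elements into $I'$.
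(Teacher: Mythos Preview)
Your proof is correct. The paper itself does not prove this lemma; it simply cites it as a standard result in the theory of SAGBI bases (referencing \cite[Lemma~11.3]{Sturmfels1996}). What you have written is essentially the standard argument that appears in that reference: decompose $f\in I$ into $\phi^*\omega$-homogeneous components, use the domain property of $\kk[y]$ to identify the top $\omega$-weight part of $\phi^*(x^a)$ with $\phi^*_\omega(x^a)$, and read off that the $\omega$-weight-$w_0$ component of $\phi^*(f)=0$ is exactly $\phi^*_\omega\bigl({\rm in}_{\phi^*\omega}(f)\bigr)$. One minor remark: your final sentence says ${\rm in}_{\phi^*\omega}(I)$ is ``generated as a vector space'' by the initial forms; strictly speaking it is the \emph{ideal} generated by them, but since you have shown every ${\rm in}_{\phi^*\omega}(f)$ lies in the ideal $I'$, the containment follows either way.
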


This is a standard result in the theory of SAGBI bases, see e.g.~\cite[Lemma 11.3]{Sturmfels1996}.  Note that for a 
polynomial parametrization $\phi : \kk^{m} \rightarrow \kk^{n}$, 
$\phi^{*}:\kk[x] \rightarrow \kk[y]$, 
denotes the pullback map, i.e.~the corresponding $\kk$-algebra homomorphism.
Hence, we can define the initial parametrization $\phi_{\omega}$ to be
the parametrization with pullback $\phi^{*}_{\omega}$.

\begin{cor}\label{cor:initialdim}
Let $\phi^{*} :\kk[x] \rightarrow \kk[y]$ be a $\kk$-algebra homomorphism and
$\omega \in \qq^{m}$ a weight vector.  Then 
$$
\dim ({\rm image} \,   \phi_{\omega})  \leq  \dim ({\rm image} \, \phi).
$$
\end{cor}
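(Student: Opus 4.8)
The plan is to deduce the dimension inequality directly from the containment of initial ideals in Lemma \ref{lem:initial}. First I would recall the dictionary between the dimension of the image of a parametrization and the dimension of the kernel of its pullback. If $\phi: \kk^{m} \rightarrow \kk^{n}$ is a polynomial parametrization with pullback $\phi^{*}: \kk[x] \rightarrow \kk[y]$ and kernel $I = \ker \phi^{*}$, then the Zariski closure of the image of $\phi$ is the affine variety $V(I)$, so $\dim(\mathrm{image}\, \phi) = \dim V(I) = \dim \kk[x]/I$ (the Krull dimension of the coordinate ring). Applying the same statement to $\phi_{\omega}$, whose pullback is $\phi^{*}_{\omega}$ with kernel $I' = \ker \phi^{*}_{\omega}$, gives $\dim(\mathrm{image}\, \phi_{\omega}) = \dim \kk[x]/I'$. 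So it suffices to show $\dim \kk[x]/I' \leq \dim \kk[x]/I$.

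Next I would bring in the standard fact that passing to an initial ideal with respect to any weight order (or term order) preserves the Hilbert function, and in particular the dimension: for any ideal $J \subseteq \kk[x]$ and any weight vector $w$, $\dim \kk[x]/\mathrm{in}_{w}(J) = \dim \kk[x]/J$. Applying this with $J = I$ and the induced weight $w = \phi^{*}\omega$ on $\kk[x]$ yields
$$
\dim \kk[x]/\mathrm{in}_{\phi^{*}\omega}(I) \; = \; \dim \kk[x]/I \; = \; \dim(\mathrm{image}\,\phi).
$$
Now Lemma \ref{lem:initial} gives the containment $\mathrm{in}_{\phi^{*}\omega}(I) \subseteq I'$, and dimension is monotone under containment of ideals in the reverse direction: a larger ideal cuts out a smaller variety, so $\dim \kk[x]/I' \leq \dim \kk[x]/\mathrm{in}_{\phi^{*}\omega}(I)$. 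Stringing the (in)equalities together:
$$
\dim(\mathrm{image}\,\phi_{\omega}) \; = \; \dim \kk[x]/I' \; \leq \; \dim \kk[x]/\mathrm{in}_{\phi^{*}\omega}(I) \; = \; \dim \kk[x]/I \; = \; \dim(\mathrm{image}\,\phi),
$$
which is exactly the claim.

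The main obstacle, such as it is, is a bookkeeping one: one must make sure the weight vector $w = \phi^{*}\omega$ induced on $\kk[x]$ is exactly the one appearing in Lemma \ref{lem:initial}, and that the ``Hilbert function is preserved under $\mathrm{in}_w$'' fact is being invoked for a genuine weight order (which may need to be refined to a term order, a routine step that does not change the initial ideal's dimension since one can use a generic perturbation or a standard refinement argument). There is a minor subtlety in that $I$ and $I'$ need not be prime or radical, but since Krull dimension only depends on the radical and radicals only grow under $\mathrm{in}_w$ in a dimension-preserving way, the argument goes through verbatim; alternatively one phrases everything in terms of transcendence degree of the image, which for a parametrization equals $\dim \kk[x]/I$ by the same variety dictionary. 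No step requires more than citing the standard Gröbner/SAGBI facts already referenced in the excerpt.
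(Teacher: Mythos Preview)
Your proposal is correct and follows essentially the same approach as the paper: both identify $\dim(\mathrm{image}\,\phi)$ with the Krull dimension of $\kk[x]/\ker\phi^{*}$, invoke the standard fact that $\dim I = \dim \mathrm{in}_{\omega} I$, and then combine the containment $\mathrm{in}_{\phi^{*}\omega} I \subseteq I'$ from Lemma~\ref{lem:initial} with the monotonicity of dimension under ideal containment. Your write-up is a bit more explicit about the bookkeeping (weight vs.\ term orders, radicals), but the argument is the same.
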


\begin{proof} 
The dimension of the image of a polynomial parametrization $\phi$ is
equal to the Krull dimension of the quotient ring  $\kk[x]/\ker \phi^{*}$.
We can speak of the dimension of an ideal, rather than the dimension
of a ring.  For any weight vector, we always have
$\dim I  =  \dim  {\rm in}_{\omega} I$.  And if $I \subseteq J$, then
$\dim J \leq \dim I$.
Thus, using the ideals in Lemma \ref{lem:initial} we have
$$ \dim I'  \leq \dim {\rm in}_{\phi^{*}\omega} I  = \dim I,$$
which completes the proof.
\end{proof}

Here is how we will use Corollary \ref{cor:initialdim}.  We want to compute the dimension of the image of a polynomial
parametrization $\phi$.  We know for other reasons an upper bound $d$ on this
dimension.  We have a weight vector $\omega$ where we 
can compute the dimension of the image
of the polynomial parametrization $\phi_{\omega}$, and we show
it is equal to $d$.
Then, by Corollary \ref{cor:initialdim}, we know that the
dimension of the image of $\phi$ must be $d$.
At a key step we compute the Jacobian of the transformation
to calculate the dimension of the image of the double characteristic
polynomial map.

\begin{proof}[Proof of Theorem \ref{thm:chainofcycles}]
Let
$\phi_{G} : \rr^{n +m} \rightarrow \rr^{2n-1}$ be
the double characteristic polynomial map associated to the
graph $G$. 
The $\kk$-algebra homomorphism of interest is
$\phi^{*}_{G}: \kk[c,d]  \rightarrow \kk[a]$
where $c, d$ are the appropriate characteristic polynomial coefficients.
Choose a weight vector $\omega$, a weighting on $\kk[a]$ such that
$$
\omega_{ij}  = \left\{ \begin{array}{cl}
 0  &  \mbox{ if } (i,j) = (n,n)  \\
 \frac{1}{2} & \mbox{ if } (i,j) = (k,n), (n,l) \\
 1 &  \mbox{ otherwise}.
\end{array}\right.
$$
Since all the polynomial functions in $\phi_{G}$ that appear
are homogeneous, this has the effect of removing any term
that involves a cycle incident to the vertex $n$, except
for the constant coefficients of the characteristic
polynomials. In this case, every term involves a cycle incident to
$n$, and all such terms will have weight $n-1$
for the full characteristic polynomial of $A$, and weight $n-2$
for the characteristic polynomial of $A_{1}$.
In other words, with the specific choice of
weighting $\omega$ above, we have:
\begin{eqnarray*}
\phi^{*}_{G,\omega}(c_{i}) & = & \phi^{*}_{G'}(c_{i})  \quad 
i = 1, \ldots, n-1  \\
\phi^{*}_{G,\omega}(d_{i}) & = & \phi^{*}_{G'}(d_{i})  \quad
 i = 1, \ldots, n-2  \\
\phi^{*}_{G,\omega}(c_{n}) & = & \phi^{*}_{G}(c_{n})   \\
\phi^{*}_{G,\omega}(d_{n-1}) & = & \phi^{*}_{G}(d_{n-1})
\end{eqnarray*}

In other words, the parametrization $\phi_{G,\omega}$ agrees
with  $\phi_{G'}$ except in its two new coordinates, where
it matches $\phi_{G}$.
Our goal now is to prove that the image of this
parametrization $\phi_{G,\omega}$ has dimension $2$ more than
the dimension of the image of $\phi_{G'}$, since this is the
largest increase in dimension that is possible.

For a map $\phi$, let $J(\phi)$ denote the Jacobian matrix.
The rank of the Jacobian matrix at a generic point gives
the dimension of the image of the map $\phi$.   
Note that generic means ``except possibly for a proper subvariety of
the parameter space''.

In our case, the Jacobian of $\phi_{G,\omega}$ 
is a $(2n-1) \times (n+m)$ matrix, whose
columns correspond to the $c$'s and $d$'s and whose rows
are labeled by the nonzero entries of $A$.  Sort the
rows and columns so that the last two rows are labeled
by $c_{n}$ and $d_{n-1}$, and the last three columns
are labelled by $a_{nn}$, $a_{kn}$ and $a_{nl}$.
With this convention on the orders of rows and columns
of the Jacobian matrix $J(\phi_{G,\omega})$, it is a block
matrix of the form
$$
J(\phi_{G,\omega})  =  \begin{pmatrix}
J(\phi_{G'})  &  0  \\
*   &  C
\end{pmatrix}
$$
where $J(\phi_{G'})$ is the $(2n-3) \times (n+m-2)$ Jacobian
matrix of $\phi_{G'}$, and $C$ is the $2 \times 3 $ matrix
\begin{equation}\label{eq:C}
C = 
\begin{pmatrix}
\frac{\partial c_{n}}{\partial a_{nn}} & 
\frac{\partial c_{n}}{\partial a_{kn}} &
\frac{\partial c_{n}}{\partial a_{nl}}  \\ 
\frac{\partial d_{n-1}}{\partial a_{nn}} & 
\frac{\partial d_{n-1}}{\partial a_{kn}} &
\frac{\partial d_{n-1}}{\partial a_{nl}}
\end{pmatrix}.
\end{equation}

By assumption the rank of $J(\phi_{G'})$ is generically
equal to $m -1$.   Since $J(\phi_{G,\omega})$ is a block triangular
matrix, it suffices to show that the matrix $C$ generically
has rank $2$.  Furthermore, we can show this by
exhibiting a single choice of the parameters $A$ that
yields a matrix $C$ with rank $2$, since having full rank
is a Zariski open condition on the parameters.
We work now on finding a matrix $A$ which gives the rank of $C$ equal to 2.

In particular, let $H$ be a chain of cycles in $G$ that
contains both $1$ and $n$.  We can assume that $1$ and $n$
are at the two opposite ends of the chain.
Suppose that the cycles in $H$ are $s_{1}, \ldots, s_{t}$
in order, so that $1$ is in cycle $s_{1}$ and $n$ is
in cycle $s_{t}$.

Choose the matrix $A$ by setting all diagonal entries to $1$,
$a_{ij} = 0$ for all edges $i \to j \not\in H$.  For all the edges
in $H$, for each cycle $s_{i}$, choose the edge weights so that
the product of edges' weights is equal to $(-1)^{\ell(s_{i}) -1}$.
For the cycle that contains the vertex $n$, we further require
that both $a_{kn}$ and $a_{nl}$ (the unique incoming and outgoing
edges to $n$) are set to $1$.

With these choices for the matrix $A$, each of the entries in the
matrix $C$ will be a nonnegative integer, equal to the number
of monomials in that polynomial entry involving only edges
from the cycles $s_{1}, \ldots, s_{t}$, together with the
trivial cycles at each node.  We must count the number of ways
to do this in each of the cases.

We handle two cases.  First when $t \geq 2$.

First consider the entry $\frac{\partial c_{n}}{\partial a_{nn}}$.
The only nonzero monomials appearing here will arise from
taking appropriate products of the cycles
$s_{1}, \ldots, s_{t-1}$, since the cycle $s_{t}$ cannot be involved.
Since each cycle touches its two neighboring cycles, and no
other cycles, and in the expansion we expand over all products of 
nontouching cycles that cover all $n$ vertices, we see that
the number of monomials will equal the number of subsets of 
$\{1, \ldots, t-1\}$, with no adjacent elements.  By Lemma \ref{lem:fibonacci}
this is the Fibonacci number $F_{t+1}$.

When we consider the entry $\frac{\partial d_{n-1}}{\partial a_{nn}}$,
the only nonzero monomial appearing here will arise from taking products
of the cycles $s_{2}, \ldots, s_{t-1}$ since
neither of the cycles $s_{1}$ nor $s_{t}$ can be involved.  By
a similar argument as the preceding paragraph we see that
this will give the Fibonacci number $F_{t}$.

Now when we consider the entry $\frac{\partial c_{n}}{\partial a_{kn}}$
or equivalently $\frac{\partial c_{n}}{\partial a_{nl}}$
we must use the cycle $s_{t}$.  This prohibits us from using 
the cycle $s_{t-1}$.  Hence, we are counting appropriate
products of the cycles $s_{1}, \ldots, s_{t-2}$.
This will give us the Fibonacci number $F_{t}$.

Finally with the entry $\frac{\partial d_{n-1}}{\partial a_{kn}}$
or equivalently $\frac{\partial d_{n-1}}{\partial a_{nl}}$
we must use the cycle $s_{t}$ and thus we cannot use the cycles
$s_{1}, s_{t-1}$.  Hence we are counting appropriate products of
the cycles $s_{2}, \ldots, s_{t-2}$.  This will give the Fibonacci number
$F_{t-1}$.

Hence, the submatrix $C$ of the Jacobian matrix has the following form
for this choice of parameters:
$$
C  = \begin{pmatrix}
F_{t+1} & F_{t} & F_{t} \\
F_{t} & F_{t-1} & F_{t-1}
\end{pmatrix}.
$$
The classical identity of Fibonacci numbers $F_{t+1}F_{t-1} - F_{t}^{2} = (-1)^{t}$ guarantees that this matrix has full rank.

In the case where $t = 1$, the same argumentation works until the
analysis of $\frac{\partial d_{n-1}}{\partial a_{kn}}$.
Since $1$ is involved in the cycle $s_{1}$, there will be no
monomials, and thus the polynomial $d_{n-1}$ is identically zero.
Since $F_{0} = 0$, then the
matrix $C$ has the same shape as above, and we still deduce that
$C$ has rank $2$.
\end{proof}

\begin{lemma}\label{lem:fibonacci}
The number of subsets $S$ of $\{1,2, \ldots, n\}$ 
such that $S$ contains no pair of adjacent numbers is
the $n+2$-nd Fibonacci number, $F_{n+2}$
which satisfies the recurrence 
$F_{0} = 0, F_{1} =1, F_{n+1} = F_{n} + F_{n-1}$.
\end{lemma}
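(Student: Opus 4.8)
The plan is to set up a direct recurrence for the counting function and then match it against the Fibonacci recurrence with appropriate initial conditions. Let $a_n$ denote the number of subsets $S \subseteq \{1, 2, \ldots, n\}$ containing no pair of adjacent numbers, with the convention $a_0 = 1$ (only the empty set). I would first compute the small cases by hand: $a_0 = 1$, $a_1 = 2$ (namely $\emptyset$ and $\{1\}$), $a_2 = 3$ (namely $\emptyset$, $\{1\}$, $\{2\}$), which already agree with $F_2 = 1$, $F_3 = 2$, $F_4 = 3$.

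Next I would establish the recurrence $a_n = a_{n-1} + a_{n-2}$ for $n \ge 2$ by a case split on whether $n \in S$. If $n \notin S$, then $S$ is exactly an arbitrary subset of $\{1, \ldots, n-1\}$ with no two adjacent elements, and there are $a_{n-1}$ of these. If $n \in S$, then adjacency forces $n-1 \notin S$, so $S \setminus \{n\}$ is an arbitrary subset of $\{1, \ldots, n-2\}$ with no two adjacent elements, and there are $a_{n-2}$ of these. These two cases are disjoint and exhaustive, giving $a_n = a_{n-1} + a_{n-2}$.

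Finally, since $a_n$ satisfies the same linear recurrence as $F_{n+2}$, namely $F_{n+2} = F_{n+1} + F_{n}$, and since the two initial values $a_0 = 1 = F_2$ and $a_1 = 2 = F_3$ coincide, a trivial induction on $n$ gives $a_n = F_{n+2}$ for all $n \ge 0$. I do not anticipate any real obstacle here; the only point requiring a moment's care is the indexing convention for the Fibonacci numbers (the statement fixes $F_0 = 0$, $F_1 = 1$) and the boundary case $n = 0$, both of which are handled by the explicit small-case check above. This also justifies the use $F_0 = 0$ made in the proof of Theorem \ref{thm:chainofcycles} for the degenerate case $t = 1$.
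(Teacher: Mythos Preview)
Your proof is correct and is the standard argument for this well-known combinatorial fact. The paper itself does not supply a proof of Lemma~\ref{lem:fibonacci}; it is stated without proof and used as a known auxiliary result in the proof of Theorem~\ref{thm:chainofcycles}. Your recurrence-and-initial-conditions argument is exactly the expected verification, and your attention to the boundary case $n=0$ (so that $a_0 = F_2 = 1$) and the indexing convention $F_0 = 0$, $F_1 = 1$ is appropriate, particularly since the $t=1$ case in the proof of Theorem~\ref{thm:chainofcycles} relies on $F_0 = 0$.
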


We can apply Theorem \ref{thm:chainofcycles} to analyze
inductively strongly connected graphs.

\begin{defn}
A directed graph $G$ is \emph{inductively strongly connected}
if each of the induced subgraphs $G_{\{1, \ldots, i\}}$
is strongly connected for $i = 1, \ldots, n$ for some ordering of the 
vertices $1,\ldots,i$ which must start at vertex $1$.
\end{defn}

\begin{prop}\label{prop:inductedges}
If $G$ is inductively strongly connected with $n$ vertices, then $G$ has
at least $2n -2$ edges.
\end{prop}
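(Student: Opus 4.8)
The plan is to exploit the inductive construction of $G$ directly. Fix an ordering $1, 2, \ldots, n$ of the vertices witnessing that $G$ is inductively strongly connected, and write $G_{i} := G_{\{1, \ldots, i\}}$ for the induced subgraph on the first $i$ vertices; by hypothesis each $G_{i}$ is strongly connected. I would count the edges of $G$ by recording, for each $i = 2, \ldots, n$, how many new edges appear when passing from $G_{i-1}$ to $G_{i}$ --- namely, the edges of $G$ incident to vertex $i$ whose other endpoint lies in $\{1, \ldots, i-1\}$.

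The key step is that at least two such edges must appear at every stage. Since $G_{i}$ is strongly connected and has $i \geq 2$ vertices, vertex $i$ must have at least one incoming edge and at least one outgoing edge within $G_{i}$ (otherwise there is no directed path to $i$, respectively from $i$). As the $m$ edges of $G$ do not include self-loops (the diagonal entries of $A$ are present regardless of the edge set), every edge incident to $i$ inside $G_{i}$ has its other endpoint in $\{1, \ldots, i-1\}$, and an incoming edge $j \to i$ is distinct from an outgoing edge $i \to k$. Hence passing from $G_{i-1}$ to $G_{i}$ adds at least two edges.

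Finally, I would observe that the edge sets counted at the various stages are pairwise disjoint: every edge counted at stage $i$ has $i$ as its larger-indexed endpoint, so no edge is counted twice. Summing over $i = 2, \ldots, n$, and noting that $G_{1}$ contributes no edges, yields $m \geq 2(n-1) = 2n-2$. I do not expect a genuine obstacle here; the only points needing care are the convention that self-loops are not among the $m$ counted edges (so the two required edges at stage $i$ really do connect $i$ to earlier vertices) and the trivial base case $i = 1$.
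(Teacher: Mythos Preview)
Your proof is correct and follows essentially the same approach as the paper: both argue that passing from $G_{i-1}$ to $G_{i}$ forces at least one incoming and one outgoing edge at vertex $i$, yielding at least $2(n-1)$ edges in total. The paper phrases this as a straightforward induction on $n$, whereas you unroll the induction into a direct summation and add a few extra words of care about self-loops and disjointness, but the underlying idea is identical.
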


\begin{proof}
By induction, if a graph with $n-1$ vertices is inductively strongly connected
it has at least $2n -4$ edges.  Adding the $n$th vertex requires adding at least two edges, one into $n$ and one out of $n$, to get
a strongly connected graph.
\end{proof}

The proof of Proposition \ref{prop:inductedges} shows that every 
inductively
strongly connected graph contains a subgraph of exactly $2n - 2$
edges, obtained by adding only one in and one out edge of vertex
$i$ at step $i$ in the construction.  An inductively
strongly connected graph with exactly $2n-2$ edges is a 
\emph{minimal inductively strongly connected graph}.

\begin{thm}\label{thm:inductive}
Let $G$ be a minimal inductively strongly connected graph with $n$ vertices.
Then the dimension of the image of the double characteristic polynomial
map is $2n-1$.
\end{thm}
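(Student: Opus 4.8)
Here is how I would attack Theorem~\ref{thm:inductive}. The plan is to induct on the number of vertices $n$, using Theorem~\ref{thm:chainofcycles} as the inductive engine. The base case $n=1$ (and, if one prefers, $n=2$) is a direct computation: for a one-vertex graph the double characteristic polynomial map sends $A=(a_{11})$ to its single characteristic coefficient, so its image has dimension $1=2n-1$. For the inductive step, fix a minimal inductively strongly connected graph $G$ with its vertices in an order $1,\ldots,n$ witnessing this. By the construction described after Proposition~\ref{prop:inductedges}, the induced subgraph $G':=G_{\{1,\ldots,n-1\}}$ is again minimal inductively strongly connected, now with $2(n-1)-2$ edges, and $G$ is obtained from $G'$ by adjoining the vertex $n$ together with exactly one incoming edge $k\to n$ and one outgoing edge $n\to l$. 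By the induction hypothesis $G'$ has the expected dimension $2(n-1)-1=2n-3$. Hence Theorem~\ref{thm:chainofcycles} yields immediately that $G$ has the expected dimension $2n-1$, \emph{provided} we verify its remaining hypothesis: that $G$ contains a chain of cycles containing both $1$ and $n$.

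Producing that chain is the real content of the proof. I would establish the following stronger statement by a secondary induction that follows the inductive-strong-connectivity order: for every $i$ and every vertex $v$ of $G_{\{1,\ldots,i\}}$, the subgraph $G_{\{1,\ldots,i\}}$ contains a chain of cycles containing both $1$ and $v$. The cases $i\le 2$ are immediate, since $G_{\{1,2\}}$ must contain the $2$-cycle on $\{1,2\}$. For the step, suppose the statement holds for $i-1$ and that vertex $i$ is added with unique incoming edge $k\to i$ and unique outgoing edge $i\to l$, where $k,l<i$. Every cycle of $G_{\{1,\ldots,i\}}$ through $i$ has the form $D=(i\to l\to\cdots\to k\to i)$, where the middle portion is a path from $l$ to $k$ in the strongly connected graph $G_{\{1,\ldots,i-1\}}$; choosing this path simple makes $D$ a simple cycle. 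If $v\ne i$ the statement reduces to the induction hypothesis for $i-1$, so assume $v=i$. By the induction hypothesis there is a chain of cycles $\mathcal{H}$ from $1$ to $l$ inside $G_{\{1,\ldots,i-1\}}$, and it meets $D$ at the vertex $l$. Letting $C_s$ be the first cycle of $\mathcal{H}$ that meets $V(D)\setminus\{i\}$, one truncates $\mathcal{H}$ after $C_s$ and appends $D$; this is a valid chain of cycles containing $1$ and $i$ precisely when $C_s$ meets $D$ in exactly one vertex.

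The main obstacle is exactly this last point. In a directed graph, two cycles sharing two or more vertices need not decompose into subcycles sharing exactly one vertex --- for instance, two directed triangles glued along a common edge admit no chain of cycles through their two private vertices --- so one cannot simply ``uncross'' $C_s$ and $D$. The remedy is to exploit the freedom that remains: the choice of the simple path realizing $D$, the choice of the vertex of $D$ onto which we splice, and the choice of the chain $\mathcal{H}$, together with the strong connectivity of $G_{\{1,\ldots,i-1\}}$, can be combined to re-route one of the two cycles so that the overlap collapses to a single vertex. It is precisely here that inductive strong connectivity is used in an essential way; the triangle example shows that plain strong connectivity is not enough, so the argument must keep track of the construction order. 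Once a chain of cycles through $1$ and $n$ has been produced, Theorem~\ref{thm:chainofcycles} applies and the outer induction on $n$ closes.
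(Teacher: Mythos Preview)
Your overall strategy matches the paper's exactly: reduce via Theorem~\ref{thm:chainofcycles} to showing that a minimal inductively strongly connected graph contains a chain of cycles through $1$ and $n$, and prove this by induction along the inductive ordering. You also correctly isolate the one genuine difficulty, namely ensuring that the cycle through the new vertex meets the inductively obtained chain in a single vertex. But you do not resolve it. The paragraph beginning ``The remedy is to exploit the freedom that remains'' is a description of what one might hope to do, not an argument; your own example of two directed triangles glued along an edge shows that re-routing is not automatic, and you never specify which structural feature of inductive strong connectivity makes it work here.

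The paper closes this gap with one clean observation that replaces all of the re-routing. Rather than aiming the chain at the out-neighbour $l$ of $n$ and then trying to splice, take any cycle $c$ through $n$ and let $i$ be the \emph{smallest} vertex of $c$. Then $V(c)\subseteq\{i,i+1,\ldots,n\}$, whereas the inductively obtained chain $H$ from $1$ to $i$ lives entirely inside $G_{\{1,\ldots,i\}}$, so $V(H)\subseteq\{1,\ldots,i\}$. These two vertex sets meet only at $i$, so $H$ and $c$ share exactly one vertex and can be concatenated directly into a chain of cycles containing $1$ and $n$. No uncrossing is needed; inductive strong connectivity enters only to guarantee that $G_{\{1,\ldots,i\}}$ is itself strongly connected, so that the induction hypothesis applies there.
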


\begin{proof}
By Theorem \ref{thm:chainofcycles} and the inductive nature
of inductively strongly connected graphs, it suffices to show
that every inductively strongly connected graph has a chain
of cycles containing the vertices $1$ and $n$.

We prove this by induction on $n$.  Since $G$ is inductively strongly connected
there is a nontrivial  cycle $c$ that passes through the vertex $n$.
If $c$ contains $1$, we are done.  Otherwise, let $i$ be the smallest
vertex appearing in $c$, and let $G'$ be the induced subgraph on
$\{1,2, \ldots, i\}$.  By induction, $G'$ has a chain of cycles $H$
containing $1$ and $i$.  Attaching $c$ to $H$ gives a chain of cycles
in $G$ containing $1$ and $n$.
\end{proof}


\section{Algorithms and Computations}\label{sec:comp}

We now summarize our results from the previous sections 
and present our work as an algorithm for testing the 
existence of and finding identifiable scaling reparametrizations
for a specific family of linear ODE models.  

\begin{alg}\label{alg:mainalg} (Computing an identifiable scaling reparametrization)

\noindent Input: A strongly connected graph $G$ with $n$ vertices and $m\leq{2n-2}$ edges. \\
Output: Either an identifiable scaling reparametrization or a statement that one does not exist. 
\begin{enumerate}
\item Compute $d=$ dimension of the image of the double characteristic polynomial map $c$. 
\item If $d\neq{m+1}$, then an identifiable scaling reparametrization does not exist. Otherwise:
\begin{enumerate}
\item Find a spanning tree $T$ of $G$, with edges $j_{1} \to i_{1}$, $\ldots$, $j_{n-1} \to i_{n-1}$. 
\item Form the matrix $E$ by rearranging the columns of $E(G)$ so that the first $n-1$ columns correspond to edges in $T$ and by deleting the first row.  In other words, $E=(E_1 \ E_2)$, where $E_1$ is an $n-1$ by $n-1$ matrix corresponding to the edges in $T$.  
\item Determine the monomial scaling $X_i=f_i(A)x_i$.
Set $f_{1}(A) = 1$.  Let $r_{i} = (r_{1,i}, \ldots, r_{n-1,i})^{T}$
be the $i$th column of $C_{1} =E_1^{-1}$.  Then
$f_{i+1}(A) = a_{i_{1}j_{1}}^{r_{1,i}} \cdots a_{i_{n-1}j_{n-1}}^{r_{n-1,i}}$.
\item Replace the entries $a_{ij}$ of $A$ with the new entries 
$a_{ij}f_i(A)/f_j(A)$.
\end{enumerate}
\end{enumerate}
\end{alg}

In Step 1, $d$ can be computed by either calculating the rank of the 
Jacobian matrix of $c$ at a generic point or by finding the vanishing ideal 
of the image of $c$ using Gr\"obner bases.  Step 1 can be sped up by 
first checking if $G$ is an inductively strongly connected graph. 
If so, the condition $d=m+1$ is automatically satisfied in Step 2
and thus $d$ need not be computed using more time-consuming methods.  

If an identifiable scaling reparametrization exists, the new matrix 
$A$ will have the $n-1$ entries $a_{ij}$ corresponding to the spanning 
tree $T$ equal to $1$ and the remaining $m-n+1$ off-diagonal entries 
can be thought of as the new parameters in the reparametrized system. 
As noted in Section \ref{sec:mono}, these parameters are identifiable
in the reparametrized model, but are {\bf not} identifiable parameters 
of the original model.  The new $m-n+1$ parameters can be written 
in terms of the cycles of the graph $G$ using the 
following algorithm:

\begin{alg}\label{alg:optalg} (Writing new coefficients in terms of cycles)

\noindent Input: A strongly connected graph $G$ that 
has an identifiable scaling reparametrization and a spanning tree $T$, 
as determined by Algorithm \ref{alg:mainalg}. \\
Output: An identifiable scaling reparametrization in terms of 
cycles of the graph $G$.
\begin{enumerate}
\item Choose a set of $m-n+1$ linearly independent cycles 
of the graph $G$, $q_{1}, \ldots, q_{m-n+1}$.  
The $m-n+1$ off-diagonal entries $a_{ij}$ 
which correspond to edges not in $T$ can written as functions 
of these cycles using the following procedure:
\begin{enumerate}
\item Form the matrix $M$ whose columns are the 
exponent vectors of $q_{1}, \ldots, q_{m-n+1}$. Rearrange the rows of
 $M$ so that the first $n-1$ rows 
correspond to the edges in $T$.  In other words, $M$ is partitioned
 into $(M_1 \ M_2)^T$, where $M_1$ corresponds to $T$.  
\item 
Let $r_{i} = (r_{1,i}, \ldots, r_{m-n+1,i})^{T}$
be the $i$th column of the matrix $D_2=M_2^{-1}$, corresponding to the
edge $k \to j$.  Then the rescaling gives the $a_{jk}$ entry
as $q_{1}^{r_{1,i}} \cdots q_{m-n+1}^{r_{m-n+1,i}}$.
\end{enumerate}
\end{enumerate}
\end{alg}

We now demonstrate our algorithms on two additional examples.

\begin{ex}
Input: The graph $G$ in Figure 3, with the associated linear ODE system:

$$
\begin{pmatrix} 
\dot{x}_1 \\
\dot{x}_2 \\
\dot{x}_3 \\ 
\dot{x}_4 \end{pmatrix} = {\begin{pmatrix} 
a_{11} & a_{12} & 0 & 0 \\
a_{21} & a_{22} & a_{23} & 0 \\
0 & 0 & a_{33} & a_{34} \\
0 & a_{42} & a_{43} & a_{44} 
\end{pmatrix}} {\begin{pmatrix}
x_1 \\
x_2 \\
x_3 \\
x_4 \end{pmatrix} } + {\begin{pmatrix}
u_1 \\
0 \\
0 \\
0 \end{pmatrix}}
$$
$$ y=x_1.$$

\begin{figure}[h]
\begin{center}
\resizebox{!}{2cm}{
\includegraphics{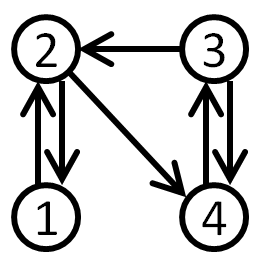}}
\end{center}\caption{A graph with four vertices}
\end{figure}

Output: No identifiable scaling reparametrization exists since $d=6$ does not equal $m+1=7$.
\end{ex}

\begin{ex}
Input: The graph $G$ in Figure 4, with the associated linear ODE system:

$$
\begin{pmatrix} 
\dot{x}_1 \\
\dot{x}_2 \\
\dot{x}_3 \\ 
\dot{x}_4 \\
\dot{x}_5 \end{pmatrix} = {\begin{pmatrix} 
a_{11} & 0 & a_{13} & 0 & a_{15} \\
a_{21} & a_{22} & 0 & 0 & 0 \\
a_{31} & a_{32} & a_{33} & a_{34} & 0 \\
0 & 0 & a_{43} & a_{44} & 0 \\
0 & 0 & 0 & a_{54} & a_{55}
\end{pmatrix}} {\begin{pmatrix}
x_1 \\
x_2 \\
x_3 \\
x_4 \\
x_5 \end{pmatrix} } + {\begin{pmatrix}
u_1 \\
0 \\
0 \\
0 \\
0 \end{pmatrix}}
$$
$$ y=x_1.$$

\begin{figure}[h]
\begin{center}
\resizebox{!}{2cm}{
\includegraphics{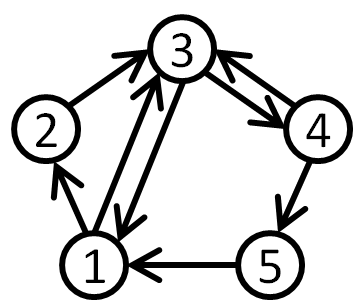}}
\end{center}\caption{A graph with five vertices}
\end{figure}

Output: The following identifiable scaling reparametrization:

$$
\begin{pmatrix} 
\dot{X}_1 \\
\dot{X}_2 \\
\dot{X}_3 \\ 
\dot{X}_4 \\
\dot{X}_5 \end{pmatrix} = {\begin{pmatrix} 
a_{11} & 0 & \frac{a_{13}a_{31}}{a_{43}a_{31}a_{15}a_{54}} & 0 & 1 \\
\frac{(a_{43}a_{31}a_{15}a_{54})(a_{13}a_{32}a_{21})} {a_{13}a_{31}} & a_{22} & 0 & 0 & 0 \\
a_{43}a_{31}a_{15}a_{54} & 1 & a_{33} & a_{34}a_{43} & 0 \\
0 & 0 & 1 & a_{44} & 0 \\
0 & 0 & 0 & 1 & a_{55}
\end{pmatrix}} {\begin{pmatrix}
X_1 \\
X_2 \\
X_3 \\
X_4 \\
X_5 \end{pmatrix} } + {\begin{pmatrix}
u_1 \\
0 \\
0 \\
0 \\
0 \end{pmatrix}}
$$
$$ y=X_1,$$

\noindent where $T$ corresponds to the edges $a_{32},a_{43},a_{54},a_{15}$,
 the rescaling is $X_1=x_1, X_2=a_{32}a_{43}a_{54}a_{15}x_2, 
 X_3=a_{43}a_{54}a_{15}x_3, X_4=a_{54}a_{15}x_4, X_5=a_{15}x_5$, 
 and the monomial cycles are $a_{13}a_{31}$, $a_{34}a_{43}$, 
 $a_{43}a_{31}a_{15}a_{54}$, and $a_{13}a_{32}a_{21}$.

Thus, the new reparametrized model has $m+1$ algebraically independent parameters 
$b_{ij}$ and can be written as:
$$
\begin{pmatrix} 
\dot{X}_1 \\
\dot{X}_2 \\
\dot{X}_3 \\ 
\dot{X}_4 \\
\dot{X}_5 \end{pmatrix} = {\begin{pmatrix} 
b_{11} & 0 & b_{13} & 0 & 1 \\
b_{21} & b_{22} & 0 & 0 & 0 \\
b_{31} & 1 & b_{33} & b_{34} & 0 \\
0 & 0 & 1 & b_{44} & 0 \\
0 & 0 & 0 & 1 & b_{55}
\end{pmatrix}} {\begin{pmatrix}
X_1 \\
X_2 \\
X_3 \\
X_4 \\
X_5 \end{pmatrix} } + {\begin{pmatrix}
u_1 \\
0 \\
0 \\
0 \\
0 \end{pmatrix}}
$$
$$ y=X_1.$$

The graph $G$ is inductively strongly connected and has $2n-2$ edges, and thus $d$ automatically equals $m+1=9$.
\end{ex}

We now describe results of our computations of small
graphs and some of the conjectures those computations
suggest.  In particular, we highlight graphs which 
do have the expected dimension but this cannot be deduced from
applying any of our constructions from Section \ref{sec:dim}.
At present we lack a conjecture which would claim to give
a complete characterization of all graphs which do have the
expected dimension, but we provide conjectures on the structure
in some extremal cases.

Below is a table displaying the results of our computations
for all relevant graphs up to $n = 5$ vertices.  These computations were performed in Mathematica \cite{Wolfram}.  { We compute the rank of the Jacobian of the double characteristic polynomial map at two randomly sampled points in parameter space to determine if the
graph $G$ has the expected dimension.  

Here we partition the graphs by the number $n$ of vertices and
the number $m$ of edges with $n \leq m \leq 2n-2$.  The columns of
the table record the following information:
\begin{enumerate}
\item[A:] The number of strongly connected graphs with $n$ vertices and $m$ edges.
\item[B:] The number of graphs from A that have the expected dimension. 
\item[C:] The number of strongly connected graphs up to symmetry permuting vertices $2, \ldots, n$. 
\item[D:] For the maximal case, $m=2n-2$, the number of strongly connected graphs up to symmetry with an exchange. 
\item[E:] The number of graphs from C that have the expected dimension.
\item[F:] For the maximal case, $m=2n-2$, the number of inductively strongly connected graphs up to symmetry. 
\end{enumerate}

\begin{center}
    \begin{tabular}{ | p{2cm} | p{1cm} | p{1cm} | p{1cm} | p{1cm} | p{1cm} | p{1cm} |}
    \hline
   $(n,m)$ & A  & B  & C & D  & E   & F \\ \hline
    (3,3)  & 2 & 2 & 1 & NA & 1    & NA  \\ \hline
    (3,4)  & 9 & 7 & 5 & 4 & 4 & 4  \\ \hline
    (4,4)  & 6 & 6 & 1 & NA & 1  & NA \\ \hline
    (4,5)  & 84 & 54 & 15 & NA &  12  & NA  \\ \hline
    (4,6)  & 316 & 166 & 55 & 34 &  30  & 26  \\ \hline
    (5,5)  & 24 & 24 & 1 & NA & 1  & NA \\ \hline
    (5,6)  & 720 & 576 & 32 & NA & 26  & NA \\ \hline
    (5,7)  & 6440 & 4052 & 281 & NA & 180   & NA \\ \hline
    (5,8)  & 26875 & 9565 & 1158 & 581 & 421  & 267  \\ \hline
    \end{tabular}
\end{center}

\begin{rmk} From the table we see that, for the maximal case when $m=2n-2$, not every graph with an identifiable reparametrization is inductively strongly connected.  Figure 5 displays the four graphs up to symmetry that have an identifiable reparametrization but are not inductively strongly connected, for $n=4$ and $m=6$.

\begin{figure}[h]
\begin{center}
\resizebox{!}{2cm}{
\includegraphics{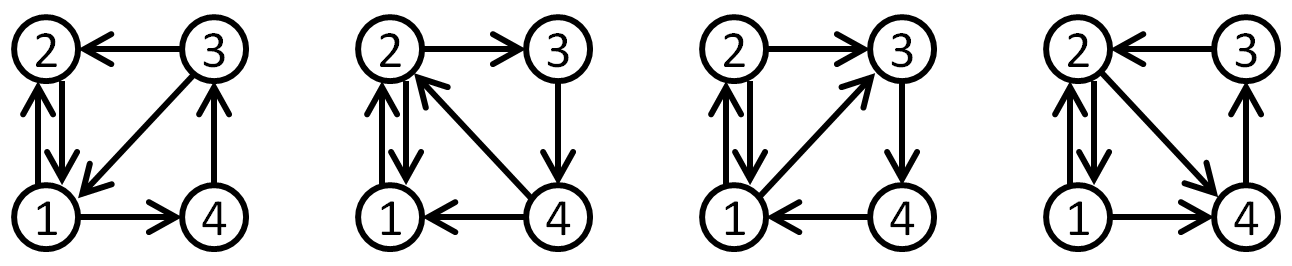}}
\end{center}\caption{Graphs with an identifiable reparametrization but not inductively strongly connected}
\end{figure}
\end{rmk}

\begin{defn} Let $G$ be a directed graph with $n +1$ vertices labelled
$0,1,2, \ldots, n$, where $0$ is the distinguished vertex corresponding
to the input-output compartment.  Suppose that $G$ has an exchange
with vertex $1$.  The \emph{collapsed graph} $G'$ is the new graph with
$n$ vertices $1, \ldots, n$, where vertices $0$ and $1$ have been
identified.  So an edge $i \to j$ appears in $G'$ if it appears in $G$ or
if $i = 1$ and $0 \to j$ appears in $G$.  The vertex $1$ in $G'$ is
the new distinguished vertex of the input-output compartment.
\end{defn}

Here are two conjectures about how having the expected dimension
is preserved under collapsing an exchange.

\begin{conj}\label{conj:collapse2}
Let $G$ be a graph with $n$ vertices and $2n-2$ edges with an exchange, and let $G'$ be the resulting collapsed
graph.  If $G'$ has $2n-4$ edges with an exchange, then $G$ has the expected dimension if and only if $G'$ has the 
expected dimension.
\end{conj}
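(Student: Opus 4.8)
The plan is to combine the block decomposition of characteristic polynomials used in the proof of Proposition \ref{prop:addtwo} with the degeneration technique of Corollary \ref{cor:initialdim}. First relabel the vertices of $G$ as $0,1,2,\dots,n-1$, with $0$ the input--output vertex, $0\leftrightarrow 1$ the collapsed exchange, and $R=\{2,\dots,n-1\}$; then $G'$ has vertex set $\{1\}\cup R$, with $1$ the identified input--output vertex. The hypothesis that $G'$ has exactly $2n-4$ edges means precisely that the parameters of $A=A(G)$ in rows $0$ and $1$ have disjoint supports over $R$, and likewise for columns $0$ and $1$; hence $A(G')$ is obtained from $A$ by ``folding'' these two rows and two columns together:
$$
A=\begin{pmatrix} a_{00} & a_{01} & r_0^{T}\\ a_{10} & a_{11} & r_1^{T}\\ s_0 & s_1 & A_R\end{pmatrix},\qquad
A(G')=\begin{pmatrix} a'_{11} & (r_0+r_1)^{T}\\ s_0+s_1 & A_R\end{pmatrix},\qquad
A_1=\begin{pmatrix} a_{11} & r_1^{T}\\ s_1 & A_R\end{pmatrix},
$$
where $a'_{11}$ is a fresh parameter and $A(G')$ with its first row and column deleted equals $A_R=(A_1)_1$. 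Expanding determinants along the deleted row and column via the adjugate gives
$$
\chi(A)=(\lambda-a_{00})\,\chi(A_1)-(a_{01},\,r_0^{T})\,{\rm adj}(\lambda I-A_1)\,(a_{10},\,s_0^{T})^{T},
$$
and analogous identities for $\chi(A_1)$ and $\chi(A(G'))$ over $A_R$, so that the coefficient data of both double characteristic polynomial maps are assembled from the common ingredients $\chi(A_R)$, ${\rm adj}(\lambda I-A_R)$, the scalars $a_{00},a_{11},a'_{11}$, and the vectors $r_0,r_1,s_0,s_1$.

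The cleanest route I would try first is a reduction to Proposition \ref{prop:addtwo}. Call a \emph{shift} the operation of replacing an edge $0\to j$ ($j\in R$) by $1\to j$, or an edge $j\to 0$ by $j\to 1$; a shift preserves strong connectedness and the clean-collapse hypothesis and produces another graph collapsing to the same $G'$. If a single shift never changes $\dim{\rm im}\,c$, then iterating shifts reduces $G$ to the graph $\widehat G$ in which vertex $0$ carries only the two exchange edges, which is exactly the un-collapse of $G'$ treated in Proposition \ref{prop:addtwo}; there the equivalence is immediate (the proof of that proposition in fact yields $\dim{\rm im}\,c_{\widehat G}=\dim{\rm im}\,c_{G'}+2$), so the conjecture would follow. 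To prove shift-invariance I would form the auxiliary graph $G^{+}$ carrying both edges $0\to j$ and $1\to j$, exhibit $G$ and the shifted graph as the two coordinate-hyperplane slices of $c_{G^{+}}$, and compare the slices by a degeneration: with a weight that is $0$ on the diagonal, $\tfrac12$ on the new pair of edges and $1$ elsewhere, the initial maps of the two slices agree except in the single coordinate $c_n=(-1)^n\det A$, and equality of images should reduce to a determinantal identity relating the $\partial c_n/\partial a_{j0}$ and $\partial c_n/\partial a_{j1}$ terms at a carefully chosen test matrix. This determinantal identity is where I expect the real difficulty: in the general case, where $r_0,r_1,s_0,s_1$ may all be large so that several chains of cycles pass through the exchange, proving it will likely require a case analysis on the local cycle structure together with a Fibonacci-type evaluation, in the spirit of the proof of Theorem \ref{thm:chainofcycles}.

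Should shift-invariance prove intractable, the fallback is to establish the two implications directly by the Jacobian method. For ``$G'$ has the expected dimension $\Rightarrow$ $G$ has the expected dimension'' one looks for a weight $\omega$ under which the Jacobian of $(c_G)_\omega$ is block lower triangular, with one diagonal block of generic rank $\dim{\rm im}\,c_{G'}=2n-3$ built from the coefficients of $\chi(A_R)$ and a degeneration of $\chi(A_1)$ onto $\chi(A(G'))$, and a complementary $2\times k$ block of rank $2$ recording the derivatives of $c_n$ and $d_{n-1}$ with respect to $a_{00},a_{01},a_{10}$; Corollary \ref{cor:initialdim} then forces $\dim{\rm im}\,c_G\ge 2n-1$, hence equality. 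The rank-$2$ claim is once more a determinantal identity, nonzero precisely because $G'$ has an exchange (contrast Proposition \ref{prop:exness}). For the converse, since $2n-3$ is already the largest possible value of $\dim{\rm im}\,c_{G'}$, it suffices to show that a dimension deficiency propagates, i.e. $\dim{\rm im}\,c_G\le\dim{\rm im}\,c_{G'}+2$; I would prove this by lifting any polynomial relation among the coefficients of $c_{G'}$ --- which by the folding identities is a relation among $\chi(A_R)$, ${\rm adj}(\lambda I-A_R)$ and $r_0+r_1,\ s_0+s_1$ --- to a relation among the coefficients of $c_G$, refining to the finer data $r_0,r_1,s_0,s_1$ and adjoining only the two scalars needed to pass between the two coefficient sets. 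Showing that this lift introduces at most two new transcendentals, and preserving genericity throughout, is the main obstacle in that direction.
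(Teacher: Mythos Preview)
The statement you are attempting to prove is labeled as a \emph{Conjecture} in the paper, and the paper does \textbf{not} supply a proof of it. The only evidence the authors offer is Proposition~\ref{prop:addtwo} (the special case where the only edges touching the collapsed vertex are the exchange edges) together with the observation that collapsing preserves the property of being inductively strongly connected, so that Conjecture~\ref{conj:collapse2} holds in the inductively strongly connected case by Theorem~\ref{thm:inductive}. There is therefore no ``paper's own proof'' to compare against; what you have written is a research plan for an open problem.

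As a plan, your shift-invariance idea is attractive and would indeed reduce the conjecture to Proposition~\ref{prop:addtwo}. You are also right that the crux is the determinantal identity showing that a single shift does not change $\dim{\rm im}\,c$; this is not a formality. The degeneration you propose does not obviously isolate the two slices cleanly, since the weight you suggest kills the shifted edge in \emph{all} coefficients, not just $c_n$, so the initial maps of the two slices need not agree outside $c_n$ as you claim. More seriously, there are maximal graphs with an exchange that fail the expected dimension (compare columns D and E of the paper's table for $(n,m)=(4,6)$ and $(5,8)$), so shift-invariance cannot hold unconditionally: some shifts must change the dimension, and your argument would need to explain why the clean-collapse hypothesis on $G'$ rules out exactly those bad shifts. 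Your fallback route has the same issue in the converse direction: the inequality $\dim{\rm im}\,c_G \le \dim{\rm im}\,c_{G'}+2$ would, together with Proposition~\ref{prop:addtwo}, force \emph{every} clean un-collapse of a deficient $G'$ to be deficient, and you have not used the hypothesis that $G'$ itself has an exchange, which Proposition~\ref{prop:exness} suggests is essential. In short, the outline is reasonable but the key steps you flag as ``where I expect the real difficulty'' are genuine gaps, and any complete argument will have to make explicit use of the exchange hypothesis on $G'$.
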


\begin{conj}\label{conj:collapse3}
Let $G$ be a graph with $n$ vertices and $\leq 2n-2$ edges with an exchange, and let $G'$ be the resulting collapsed
graph.  If $G'$ has $n-1$ edges, then $G$ has the expected dimension if and only if $G'$ has the 
expected dimension.
\end{conj}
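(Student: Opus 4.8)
Since $G'$ is strongly connected on $n-1$ vertices with exactly $n-1$ non-loop edges, every vertex of $G'$ has in-degree and out-degree one, so $G'$ is a single directed cycle; by the proposition showing that directed cycles have the expected dimension, $G'$ automatically does. Hence the conjecture is equivalent to the claim that, under its hypotheses, $\dim\,{\rm im}\,c = m+1$ for $G$ itself. The first step of the plan is a purely combinatorial description of such a $G$: collapsing the exchange can only identify the unique incoming and unique outgoing cycle-edge of $G'$ at the collapsed vertex, so $G$ is obtained from the directed cycle $G'$ by splitting its distinguished vertex into two vertices $0$ and $1$ joined by the exchange $0 \leftrightarrow 1$, and reattaching the incoming cycle-edge of $G'$ to $0$, to $1$, or to both, and likewise for the outgoing cycle-edge. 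A short degree count then gives $m \in \{n+1,n+2,n+3\}$, and the nontrivial directed cycles of $G$ are exactly the exchange $2$-cycle $Z_2$ together with the ``long'' cycles that traverse the whole path $P$ on $\{2,\dots,n-1\}$, entering it from $0$ or from $1$, leaving it to $0$ or to $1$, and closing through the exchange; such a long cycle covers $n-1$ vertices when it meets only one of $\{0,1\}$ and $n$ vertices when it meets both, and if all four long cycles $C_{00},C_{01},C_{10},C_{11}$ occur they satisfy the single multiplicative relation $a^{Z_2}a^{C_{00}}a^{C_{11}} = a^{C_{01}}a^{C_{10}}$. In every case the rank of this cycle collection is $m-n+1$.

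The plan is then to exhibit $m+1$ algebraically independent identifiable functions, namely the $n$ diagonal entries $a_{00},\dots,a_{n-1,n-1}$ together with $m-n+1$ independent monomial cycles drawn from the list above; since $\dim\,{\rm im}\,c \le m+1$ always by Proposition \ref{prop:cyclemapbound} and Theorem \ref{thm:dimcycle}, this forces equality. Algebraic independence is immediate, since the long cycles involve the path edges and the exchange edges in multiplicatively independent ways. For identifiability I would read off coefficients of the two characteristic polynomials using Theorem \ref{thm:funccycles}, in this order: (i) $a^{Z_2}$ from $c_2$, since for $n\ge 4$ the only $2$-cycle of $G$ is the exchange; (ii) the diagonals $a_{11},\dots,a_{n-1,n-1}$ from the non-constant coefficients of the characteristic polynomial of $A_1$, the matrix of $G$ with the input-output vertex removed, a graph which by the structural description has \emph{at most one} nontrivial cycle, namely $C_{11}$ when present --- whose monomial is then read off from the constant term of that polynomial; (iii) $a_{00}=-c_1+d_1$; (iv) the $(n-1)$-vertex long cycles one at a time from $c_{n-1}$, where each occurs with coefficient $\pm 1$ and where $a^{C_{11}}$, if relevant, is already known from step (ii); (v) the $n$-vertex long cycles from $c_n$, using the multiplicative relation above to close the system when both $C_{01}$ and $C_{10}$ occur, so that $a^{C_{01}}$ and $a^{C_{10}}$ are recovered from their sum and product.

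The main obstacle is not the identifiability computation --- which is short once the structure is in hand --- but proving the combinatorial description rigorously and uniformly: that only the two cycle-edges incident to the collapsed vertex can be ``duplicated''; that $G$ with the input-output vertex removed has at most one nontrivial cycle in every resulting sub-case; and that no unexpected vertex-disjoint cycle products contaminate the low-order coefficients $c_2,\dots,c_{n-1}$, so that the extraction really is triangular. One must also treat $n=3$ separately --- there $G$ consists of two exchanges sharing the input-output vertex, and one instead solves a $2\times 2$ system built from $c_2$ and $c_3$, nonsingular because the two diagonals are generically distinct --- and check the harmless genericity conditions (distinctness of diagonal roots; solvability of the single quadratic determining $\{a^{C_{01}},a^{C_{10}}\}$). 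If one wishes to bypass the case analysis altogether, an alternative is the degeneration method of Corollary \ref{cor:initialdim}: a suitable weight vector giving large weight to the exchange edges should degenerate $\phi_G$ to a map whose image dimension can be computed from a small lower-right Jacobian block, whose full rank is then verified on one explicit parameter specialization exactly as in the proof of Theorem \ref{thm:chainofcycles}; choosing the right weight vector is itself the principal difficulty of that route.
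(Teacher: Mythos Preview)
This statement is presented in the paper as an open \emph{conjecture}; the paper gives no proof, only the remark that Proposition~\ref{prop:addtwo} provides supporting evidence in the special case where the exchange edges are the only edges incident to the input--output vertex. So there is no proof in the paper to compare against, and your proposal should be read as an attempt to settle an open question.

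Your structural reduction is correct and is the right first move: a strongly connected graph on $n-1$ vertices with exactly $n-1$ edges is a single directed cycle, hence automatically has the expected dimension, and your description of $G$ as the cycle $G'$ with its distinguished vertex split into $0$ and $1$, with the two incident cycle-edges reattached to $0$, to $1$, or to both, is accurate for $n\ge 4$. The enumeration of cycles of $G$ and the multiplicative relation among them are also correct.

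The genuine gap is in step~(ii) of your extraction. When $C_{11}$ is present, the induced graph on $\{1,\ldots,n-1\}$ is the full $(n-1)$-cycle, and the non-constant coefficients $d_1,\ldots,d_{n-2}$ of $\det(\lambda I-A_1)$ are only the first $n-2$ elementary symmetric functions of the $n-1$ diagonals $a_{11},\ldots,a_{n-1,n-1}$; these do \emph{not} determine those diagonals---you are one equation short. The constant term $d_{n-1}$ then mixes the missing $e_{n-1}(a_{11},\ldots,a_{n-1,n-1})$ with $a^{C_{11}}$, so it cannot supply both. Since steps~(iv) and~(v) presuppose that all diagonals are already known, the whole triangular scheme collapses in precisely the case you single out.

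In the sub-case $m=n+1$ with only $C_{11}$ present this is repairable via the cofactor expansion of Proposition~\ref{prop:addtwo}, which recovers $\det(\lambda I-A_2)$ (the characteristic polynomial of the acyclic path on $\{2,\ldots,n-1\}$) and hence those diagonals first. But once vertex~$0$ acquires an additional edge to or from the path (the $m=n+2$ and $m=n+3$ cases), that expansion no longer isolates $\det(\lambda I-A_2)$, and no evident triangular order remains. So, contrary to your assessment, the identifiability computation is where the real difficulty lies; the combinatorics is, as you show, essentially immediate. Your alternative route via Corollary~\ref{cor:initialdim} and a Jacobian specialization is the more promising line for a uniform argument.
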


Some supporting evidence for these conjectures is provided by Proposition 
\ref{prop:addtwo}, where it is possible to collapse an exchange if those
are the only edges incident to vertex $1$.  Also, 
in the case where $G$ is an inductively strongly connected
graph, the collapsing preserves the property of being inductively strongly
connected, and hence Conjecture \ref{conj:collapse2} is true in that case.  

\begin{prop}
Let $G$ be an inductively strongly connected graph, and let $G'$ be
the graph obtained by collapsing the vertices in the first exchange.
Then $G'$ is inductively strongly connected.
\end{prop}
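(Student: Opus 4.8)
The plan is to lift a given inductive ordering of $G$ directly to one of $G'$: delete the vertex that gets absorbed in the collapse, keep every other vertex in its original position, and put the merged vertex first.

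Concretely, I would start by fixing a labelling $1, 2, \ldots, n$ of the vertices of $G$, with $1$ the input--output vertex, witnessing inductive strong connectedness, so that $G_{\{1, \ldots, i\}}$ is strongly connected for each $i = 1, \ldots, n$. The next step is to identify the first exchange: since $G_{\{1,2\}}$ is a strongly connected graph on the two vertices $1$ and $2$, it must contain both edges $1 \to 2$ and $2 \to 1$, so $\{1,2\}$ is an exchange, and it is the first one since no vertex with smaller index can be an exchange partner of $1$. Collapsing $\{1,2\}$ identifies these two vertices into a single new vertex $1'$, which becomes the input--output vertex of $G'$, while the remaining vertices keep their labels $3, \ldots, n$.

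I would then isolate the only genuinely graph-theoretic input, namely the lemma: \emph{if $H$ is strongly connected and $\bar H$ is obtained from $H$ by identifying two vertices, then $\bar H$ is strongly connected}. This is immediate: given vertices $p, q$ of $\bar H$, pick any lifts $\tilde p, \tilde q$ in $H$; a directed path from $\tilde p$ to $\tilde q$ in $H$ projects to a directed walk from $p$ to $q$ in $\bar H$, and a directed walk contains a directed path. Applying this with $H = G_{\{1, \ldots, i\}}$ and the identification $1 \sim 2$ then finishes the argument, once one checks the (routine) bookkeeping that collapsing an exchange commutes with passing to induced subgraphs: for $2 \le i \le n$, the induced subgraph of $G'$ on $\{1', 3, 4, \ldots, i\}$ is exactly the quotient of $G_{\{1, \ldots, i\}}$ by $1 \sim 2$, because identifying $1$ and $2$ creates no new edges, merges only edges incident to $1$ or $2$ (the exchange edges becoming a loop at $1'$), and the preimage of $\{1', 3, \ldots, i\}$ under the quotient map is $\{1, 2, 3, \ldots, i\}$. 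Since $G_{\{1, \ldots, i\}}$ is strongly connected, so is its quotient; together with the trivial base case of the single vertex $1'$, this shows the ordering $1', 3, 4, \ldots, n$ witnesses that $G'$ is inductively strongly connected.

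I do not expect a real obstruction here; the content is entirely in the observation that a quotient of a strongly connected graph is strongly connected, plus the fact that the relevant quotients are themselves induced subgraphs of $G'$. The only place that needs a little care is that last commutation of ``collapse'' with ``restrict to the initial segment $\{1, \ldots, i\}$'', which works precisely because the collapsed edge is an exchange at the distinguished vertex and hence lies inside every initial segment of the ordering.
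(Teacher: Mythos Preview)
Your argument is correct. Both your proof and the paper's reach the same conclusion---that the ordering $1', 3, 4, \ldots, n$ witnesses inductive strong connectedness of $G'$---but by slightly different routes. The paper argues by induction on $n$: it peels off the top vertex, applies the induction hypothesis to $G_{\{1,\ldots,n-1\}}$ to see that its collapse is inductively strongly connected, and then observes that $G'$ arises from that collapse by reattaching $n$ with at least one in-edge and one out-edge (which preserves strong connectedness). You instead verify \emph{all} initial segments of $G'$ in one stroke, by isolating the lemma that identifying two vertices preserves strong connectedness and checking that collapsing commutes with restricting to $\{1,\ldots,i\}$. Your approach is a bit more self-contained, since it makes the key graph-theoretic fact explicit rather than leaving it implicit in the inductive step; the paper's version is terser but relies on the reader supplying that same observation when vertex $n$ is reattached.
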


Note that since the induced subgraph $G_{1,2}$ is strongly connected,
every inductively strongly connected graph has an exchange that can
be collapsed.

\begin{proof}
We proceed by induction on the number of vertices.  Let $G$ have
$n$ vertices and be inductively strongly connected.  Let $\tilde{G} = G_{\{1, \ldots, n-1\}}$ be the induced subgraph on the first $n-1$ vertices.
This is inductively strongly connected.  Its collapsing $\tilde{G}'$
is inductively strongly connected by induction.  The graph
$G'$ is obtained from $\tilde{G}'$ by adding the vertex $n$ and at least
one incoming edge to and one outgoing edge from $n$, which makes
$\tilde{G}'$ inductively strongly connected.
\end{proof}


\section*{Acknowledgments}

We would like to thank Marisa Eisenberg and Hoon Hong for 
their constructive comments concerning this work.  
Nicolette Meshkat was partially supported by the David and Lucille Packard Foundation.
Seth Sullivant was partially supported by the David and Lucille Packard 
Foundation and the US National Science Foundation (DMS 0954865).

\end{document}